\theoremstyle{plain}
    \newtheorem{thm}{Theorem}
    \newtheorem{prop}{Proposition}[section]
    \newtheorem{otherthm}[prop]{Theorem}
    \newtheorem{corol}[prop]{Corollary}
    \newtheorem*{scho}{Scholium}
    \newtheorem{lemma}[prop]{Lemma}
    \newtheorem{subl}[prop]{Sublemma}
	\newtheorem*{propdef}{Proposition/Definition}
\theoremstyle{definition}
\theoremstyle{remark}
    \newtheorem{rem}[prop]{Remark}
    \newtheorem*{conj}{Conjecture}
    \newtheorem*{claim}{Claim}
    \newtheorem*{ack}{Acknowledgements}
\numberwithin{equation}{section}
\def\bysame{\leavevmode\hbox to3em{\hrulefill}\thinspace} %necessary if class=article (not amsart)
\newcommand{\R}{\mathbb{R}}\newcommand{\Z}{\mathbb{Z}}
\newcommand{\cB}{\mathcal{B}}\newcommand{\cC}{\mathcal{C}}\newcommand{\cD}{\mathcal{D}}
\newcommand{\cL}{\mathcal{L}}
\newcommand{\cM}{\mathcal{M}}\newcommand{\cN}{\mathcal{N}}\newcommand{\cO}{\mathcal{O}}\newcommand{\cP}{\mathcal{P}}
\newcommand{\cR}{\mathcal{R}}
\newcommand{\cU}{\mathcal{U}}\newcommand{\cV}{\mathcal{V}}
\newcommand{\cY}{\mathcal{Y}}\newcommand{\cZ}{\mathcal{Z}}
\newcommand{\eps}{\varepsilon}
\renewcommand{\epsilon}{\varepsilon}
\renewcommand{\setminus}{\smallsetminus}
\renewcommand{\emptyset}{\varnothing}
\newcommand{\id}{\mathrm{id}}
\newcommand{\Diff}{\mathrm{Diff}}
\DeclareMathOperator*{\supp}{supp}
\DeclareMathOperator{\cl}{cl}
\DeclareMathOperator{\ess}{ess}
\renewcommand{\phi}{\varphi}
\newcommand{\wed}{{\mathord{\wedge}}}
\newcommand{\charac}{{\mathbbm{1}}}
\newcommand{\bkappa}{{\boldsymbol{\kappa}}}
\newcommand{\blambda}{{\boldsymbol{\lambda}}}
\newcommand{\bnu}{{\boldsymbol{\nu}}}
\newcommand{\NUH}{\mathrm{Nuh}}      %uppercase? too heavy. \mathsf? too ugly. 
\newcommand{\Var}{\mathrm{Var}}
\newcommand{\class}{\mathrm{Phc}}
\newcommand{\block}{\mathrm{Bl}}
\newcommand{\arxiv}[1]{\href{http://arxiv.org/abs/#1}{\tt arXiv:{#1}}}
\newcommand{\mr}[1]{\href{http://www.ams.org/mathscinet-getitem?mr=#1}{\tt MR{#1}}}
\begin{document}

\title[Nonuniform Hyperbolicity and Global Dominated Splittings]
{Nonuniform Hyperbolicity, Global Dominated Splittings and Generic Properties of Volume-Preserving Diffeomorphisms}

\author[A.~Avila]{Artur Avila}
%\address{CNRS UMR 7599, Laboratoire de Probabilit\'es et Mod\`eles al\'eatoires,
%Universit\'e de Paris VI \\ FRANCE}
%\curraddr{IMPA \\ Estrada Dona Castorina, 110, 22460-320, Rio de Janeiro, RJ \\ BRAZIL}
%\urladdr{www.impa.br/$\sim$avila}
%\email{artur@math.sunysb.edu}

\author[J.~Bochi]{Jairo Bochi}
%\address{Departamento de Matem\'atica, Pontif\'{\i}cia Universidade Cat\'olica do Rio de Janeiro \\
%Rua Mq.\ S.\ Vicente, 225,
%22453-900, Rio de Janeiro, RJ \\ BRAZIL}
%%\urladdr{{\href{http://www.mat.puc-rio.br/%7Ejairo}{www.mat.puc-rio.br/\~{}jairo}}}
%\urladdr{www.mat.puc-rio.br/jairo}
%\email{jairo@mat.puc-rio.br}

\thanks{Both authors are partially supported by CNPq--Brazil. 
This research was partially
conducted during the period Avila served as a Clay Research Fellow.}

\begin{abstract}
We study generic volume-preserving diffeomorphisms on compact manifolds.
We show that the following property holds generically in the $C^1$ topology:
Either there is at least one zero Lyapunov exponent at almost every point,
or the set of points with only non-zero exponents forms an ergodic component.
Moreover, if this nonuniformly hyperbolic component has positive measure 
then it is essentially dense in the manifold
(that is, it has a positive measure intersection with any nonempty open set)
and 
there is a global dominated splitting.
For the proof we establish some new properties of independent interest
that hold $C^r$-generically for any $r \ge 1$,
namely: the continuity of the ergodic decomposition,
the persistence of invariant sets, and
the $L^1$-continuity of Lyapunov exponents.
\end{abstract}

\date{\today}

\maketitle

%\keywords{Nonuniform hyperbolicity, dominated splittings, generic properties}

%\subjclass[2010]{37D25; 37D30, 37C20}
%37D20 Uniformly hyperbolic systems (expanding, Anosov, Axiom A, etc.)
%37D25 Nonuniformly hyperbolic systems (Lyapunov exponents, Pesin theory, etc.)
%37D30 Partially hyperbolic systems and dominated splittings
%37J10 Symplectic mappings, fixed points
%37C20 Generic properties, structural stability

%\tableofcontents

\section{Introduction}

%\subsection{Manifestations of Hyperbolicity}

Hyperbolicity is a fundamental concept in Differentiable Dynamical Systems. Its strongest form is \emph{uniform} hyperbolicity: it requires that the tangent bundle splits into uniformly contracting and expanding subbundles. Such dynamics is evidently ``chaotic'', that is, sensitive to the initial conditions. Moreover, these properties are robust under perturbations. Uniform hyperbolicity was studied by Smale, Anosov, Sinai and many others who obtained a profusion of consequences. %%% related to ergodicity...(?)

Concurrent with the development of the uniformly hyperbolic theory, it became clear that it leaves out many chaotic dynamical systems of interest. This motivated the introduction of more flexible forms of hyperbolicity. 

In the presence of an invariant probability measure, Oseledets theorem guarantees the existence of Lyapunov exponents at almost every point. These numbers measure the asymptotic growth of tangent vectors under the dynamics. \emph{Nonuniform} hyperbolicity only requires that they are non-zero. As it was shown by Pesin and Katok, this condition allows for the development of a rich theory (invariant manifolds, periodic points etc). 
This theory has a strong measure-theoretic flavor: the Lyapunov exponents, the Oseledets subbundles and the invariant manifolds are only defined almost everywhere, and vary only measurably with the point.

Other relaxed versions of the notion of uniform hyperbolicity, initially developed having in mind the understanding of robust dynamical properties, were partial hyperbolicity and projective hyperbolicity (dominated splittings). While these keep some uniform requirements as the existence of continuous subbundles, neutral directions are also allowed. Those concepts later played an important role in the development of the theory of $C^1$-generic dynamics.

Much more information about these developments can be found in the books 
\cite{BP_book} and \cite{BDV_book}.
For an extensive current panorama of $C^1$-generic dynamics, see \cite{Cro}.

%In the setting of conservative dynamics (i.e., preserving a smooth volume form),
% that is the concern of this paper, all these notions of hyperbolicity are fundamental ingredients
% in establishing ergodicity.

% One of the goals of this paper is to develop the relation between nonuniform hyperbolicity and dominated
% splitting in the setting of conservative diffeomorphisms (i.e., preserving a smooth volume form), in the
% $C^1$-% generic context. Besides its intrinsic interest, is the eventual understanding of generic ergodicity:
% indeed, all notions of hyperbolicity provide important mechanisms for ergodicity.

\medskip

In this paper we deal with 
conservative (i.e.\ preserving a smooth volume form) %%volume-preserving 
diffeomorphisms,
more precisely with $C^1$-generic ones.
One of our goals is to show that 
\emph{the presence of some nonuniform hyperbolicity implies 
the existence of a global dominated splitting.} 
It has been previously understood in \cite{BV}
that the presence of non-zero Lyapunov exponents implies the existence of ``local'' dominated splittings. 
On the other hand, global dominated splittings not only provide considerable restrictions on the dynamics (for instance, the topology of the ambient space is constrained),
but it is a basic starting point towards proving \emph{ergodicity}.

All known arguments ensuring frequent ergodicity
require at least a dominated splitting: see for example \cite{PS}, \cite{Tah}, \cite{ABW}, \cite{RRTU}.
In fact, stably ergodic diffeomorphisms necessarily have a global dominated splitting \cite{AM}.

In the $C^1$-generic situation, despite being unable to obtain full ergodicity,
we show that the nonuniformly hyperbolic part of the space forms a ergodic component.

%\marginpar
%{Dar uma lista de referencias incluindo Pugh-Shub (sem enfatizar), RRTU, ABW e Tahzibi (a
%tese dele acho que tinha uns ergodicos so com decomposicao dominada.  Talvez
%contrapor que na falta de decomposicao dominada nao existe nenhum mecanismo
%conhecido que pudesse levar a frequencia da ergodicidade.}

\medskip

The result of \cite{BV} is based on ideas of Ma\~n\'e \cite{Mane_ICM}, 
who suggested that for generic diffeomorphisms the measurable and asymptotic information provided by the Oseledets theorem could be improved to continuous and uniform.
In a similar spirit, 
we study how regularly 
certain measurable objects (invariant sets, the ergodic decomposition, and Lyapunov exponents)
vary with respect to the dynamics,
obtaining improved properties in the generic case.
Later we combine this information
with an arsenal of $C^1$ tools and some Pesin theory
(especially the recent work \cite{RRTU})
to address the existence of global dominated splittings
and ergodicity of the nonuniformly hyperbolic set.
Since (most of) Pesin theory requires more than $C^1$ differentiability,
our arguments use the smoothing result of \cite{Avila}.

We proceed now to a formal statement of our main results.

\subsection{A Generic Dichotomy}

Let $M$ be a smooth compact connected manifold of dimension at least $2$, 
and let $m$ be a smooth volume measure, that we also call Lebesgue.
Let $\Diff_m^r(M)$ be the set of $m$-preserving $C^r$-diffeomorphisms endowed with the
$C^r$ topology.

%Naturally, several objects of interest in (Differentiable) Ergodic Theory, such as
%$f$-invariant sets, the ergodic decomposition,
%and Lyapunov exponents belong intrinsically to the
%measurable category.  In this paper, we obtain new results about the
%regularity of the dependence of such objects with respect to $f$,
%aiming specifically at obtaining improved properties in the (topologically) generic
%case.  Particularly when $r=1$, those can be combined
%with a large arsenal of low regularity techniques to greater effect.
%In this direction, our main result is concerned especially with the
%{\it nonuniformly hyperbolic} part of the dynamics.

Let  $f \in \Diff_m^1(M)$.
By Oseledets theorem,
for $m$-almost every point $x \in M$
there is a splitting
$T_x M = E^1(x) \oplus \cdots \oplus E^{\ell(x)}(x)$,
and there are numbers $\hat\lambda_1(f,x) > \cdots > \hat\lambda_{\ell(x)}(f,x)$,
called the Lyapunov exponents,
such that
$$
\lim_{n \to \pm \infty} \frac{1}{n} \log \big\| Df^n(x) \cdot v \big\| = \hat \lambda_i(f,x)
\quad \text{for every $v \in E_i(x) \setminus \{0\}$.}
$$
Repeating each Lyapunov exponent $\hat\lambda_i(f,x)$ according to its multiplicity $\dim E^i(x)$,
we obtain a list $\lambda_1(f,x) \ge \lambda_2(f,x) \ge \cdots \ge \lambda_d(f,x)$.
Since volume is preserved, $\sum_{j=1}^d \lambda_j(f,x) = 0$.

A point $x$ (or its orbit) is called \emph{nonuniformly hyperbolic} if
all its Lyapunov exponents are non-zero.
The set of those points is indicated by  $\NUH(f)$.

\begin{thm} \label{t.main}
There is a residual set $\cR \subset \Diff^1_m(M)$ such that for every  $f \in \cR$, 
either $m(\NUH(f)) = 0$ or
the restriction $f|\NUH(f)$ is ergodic and the orbit of almost every
point in $\NUH(f)$ is dense in the manifold.
\end{thm}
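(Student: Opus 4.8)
The plan is to realize $\cR$ as a countable intersection of residual sets. Five of them come from general principles or from results of this paper: the continuity sets of the ergodic decomposition, of the Lyapunov exponents in the $L^1$ sense, and of the persistence of invariant sets (these three are the generic properties announced above); the residual set of \cite{BV} along whose members the Oseledets splitting admits local dominated splittings over $\NUH(f)$; and the residual set of transitive diffeomorphisms (see \cite{Cro}). To these I add the continuity sets of the functionals $\cL_k\colon f\mapsto \int_M(\lambda_1+\cdots+\lambda_k)(f,x)\,dm(x)$, $1\le k\le d-1$, which are upper semicontinuous because $\cL_k(f)=\inf_{n\ge 1}\tfrac1n\int_M\log\|\wedge^k Df^n\|\,dm$ is an infimum of continuous functions, and hence have residual continuity sets. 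Fix $f\in\cR$ with $m(\NUH(f))>0$; the task is to show $f|\NUH(f)$ is ergodic and $m$-a.e.\ orbit in $\NUH(f)$ is dense in $M$.

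\emph{Step 1: global dominated splitting with sign structure.} I would first promote the local dominated splittings of \cite{BV} to a continuous dominated splitting of $TM$ over $\overline{\NUH(f)}$, by the contrapositive of the Bochi--Ma\~n\'e--Viana perturbation machinery: if the $k$-th gap of the Oseledets spectrum were not dominated on a positive-measure subset of $\NUH(f)$, arbitrarily small $C^1$ perturbations would merge the two exponents bordering that gap on a set of definite measure, strictly decreasing $\cL_k$, which is impossible at a continuity point of $\cL_k$. The same dichotomy forces each Oseledets subbundle to carry exponents of a single sign over $\NUH(f)$; hence $\NUH(f)$ is, modulo a null set, the disjoint union over $p\in\{1,\dots,d-1\}$ of the invariant sets $N_p$ of points having exactly $p$ positive exponents, and over each (the closure of each) $N_p$ the coarsening $TM=E_p^-\oplus E_p^+$ of the dominated splitting separates the contracting from the expanding Oseledets bundles.

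\emph{Step 2: ergodicity via Pesin theory and counting of components.} Since the ergodicity criterion of \cite{RRTU} requires more than $C^1$ regularity, I would combine it with the smoothing theorem of \cite{Avila} and the continuity of the ergodic decomposition: applying the criterion to volume-preserving $C^\infty$ approximations $g$ of $f$---which inherit the dominated splitting, as domination is $C^1$-open, and for which $m(\NUH(g))$ stays bounded below by the $L^1$-continuity of the exponents---shows that each $N_p(g)$ of positive measure is, modulo a null set, a countable union of ergodic components, each coinciding modulo a null set with an open set (the latter being the usual output of the Hopf argument along the Pesin laminations once a dominated splitting is available); continuity of the ergodic decomposition at $f$ then transfers this structure to the $N_p$ themselves. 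Now transitivity of $f$ shows that the closure of any such ergodic component equals $M$ (a closed invariant set of a transitive homeomorphism is either $M$ or nowhere dense), so every component is dense and essentially dense; but two distinct components, being disjoint modulo null sets while each equals modulo a null set a dense open set, would have a nonempty---hence positive-measure---open intersection, a contradiction. Hence there is at most one ergodic component, and since $m(\NUH(f))>0$ there is exactly one: $f|\NUH(f)$ is ergodic, $\supp(m|_{\NUH(f)})=M$, and by ergodicity the orbit of $m$-a.e.\ point of $\NUH(f)$ is dense in $M$.

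\emph{Main obstacle.} The crux is the interface between the $C^1$-generic hypotheses and Pesin theory. On one side, one must extract from the $C^1$ perturbation arguments a genuinely global dominated splitting with the correct separation of signs over all of $\overline{\NUH(f)}$. On the other---and this is the harder point---one must transfer the $C^{1+\alpha}$ ergodicity criterion of \cite{RRTU} to the merely $C^1$ diffeomorphism $f$; this is precisely where the smoothing theorem of \cite{Avila} and the generic continuity of the ergodic decomposition have to be used in tandem, and where the bulk of the technical difficulty resides.
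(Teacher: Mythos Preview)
Your outline correctly identifies the three central ingredients---smoothing via \cite{Avila}, the criterion of \cite{RRTU}, and the continuity of the ergodic decomposition---but the way you glue them together has a genuine gap. In Step~2 you assert that, for a $C^2$ approximation $g$, each $N_p(g)$ decomposes into ergodic components that coincide mod~$0$ with \emph{open} sets, attributing this to ``the usual Hopf argument along the Pesin laminations once a dominated splitting is available.'' This is not what \cite{RRTU} gives, and it is not true in the nonuniformly hyperbolic setting: a dominated splitting $E^{cu}\oplus E^{cs}$ provides plaque families, but the actual Pesin stable and unstable manifolds have only measurably varying size, so the Hopf argument does not produce open ergodic components. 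The criterion in \cite{RRTU} instead identifies ergodic components with Pesin \emph{homoclinic classes of hyperbolic periodic points}, and these need not be open mod~$0$. Once openness fails, your contradiction via ``two dense open sets must overlap'' collapses, and with it both the essential density of $\NUH(f)$ and the uniqueness of the index~$p$.

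The paper's route is structurally different precisely here. Rather than openness and bare transitivity, it first uses an enhanced Ergodic Closing Lemma (Theorem~\ref{t.ECL plus}) to place finitely many hyperbolic periodic points in the Pesin block of~$f$; for nearby $C^2$ maps $g$, the Pesin homoclinic classes of their continuations are then shown to be positive-measure, ergodic, and to capture almost all of the dominated set~$\Lambda$ (this is Lemma~\ref{l.main}). The connecting result Theorem~\ref{t.connecting} (heteroclinic intersections between any two periodic orbits, with stable/unstable manifolds dense in~$M$) is then used three times: to merge the finitely many classes into one, to prove essential density via Lemma~\ref{l.ess cl}, and to force uniqueness of the index~$i$. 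The transfer back to $f$ is a specific contradiction argument in $\cM(\cM(M))$: if $m|\NUH_i(f)$ were not ergodic, one finds an open $\cU\ni\mu$ with $\bkappa_f(\cU)<m(\NUH_i(f))$, while the ergodic components for $g_n$ force $\bkappa_{g_n}(\cU)\to m(\NUH_i(f))$. Your phrase ``continuity of the ergodic decomposition transfers this structure'' is too vague to stand in for this step. In short: you are missing the periodic points and the connecting lemma, and these are not decorative---they are what replaces the unavailable openness of ergodic components.
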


Let us now explain how Theorem~\ref{t.main} can be used to
construct \emph{global dominated splittings}.

%Let $T_x M = E^1(x) \oplus \cdots \oplus E^{\ell(x)}(x)$ be the Oseledets splitting, \margem{defnir}
%defined for $m$-almost every $x$.
%Ma\~n\'e proposed in \cite{Mane_ICM} to improve the information given by Oseledets Theorem 
%under $C^1$-generic assumptions on the map $f$.
%In this spirit, 
It was shown by Bochi and Viana \cite{BV} that
\emph{for a generic $f$ in $\Diff^1_m(M)$, the Oseledets splitting along $m$-almost every orbit is either trivial or dominated.}
This means that for almost every $x \in M$, 
\begin{enumerate}
\item \label{i.trivial}
either $\ell(x)=1$, that is, all Lyapunov exponents are zero;
\item \label{i.dominated} 
or $\ell(x) > 1$ and there exists $n \geq 1$ such that
$$
\frac{\|Df^m( f^k x) \cdot v_i \|}{\|Df^m(f^k x) \cdot v_j \|} > 2 
$$
for every $k \in \Z$, unit vectors $v_i \in E^i(f^k x)$, $v_j \in E^j(f^k x)$ with
$i<j$, and $m \geq n$.
\end{enumerate}
In particular, the manifold $M$ equals $Z \sqcup \Lambda$ mod~$0$,
where $Z$ is the set where all Lyapunov exponents are zero,
and $\Lambda$ is an increasing union of Borel
sets $\Lambda_n$ where the Oseledets splitting is nontrivial and
dominated with uniform~$n$.
Since dominated splittings are always uniformly continuous
(see e.g.~\cite{BDV_book}),
there is a (uniform, nontrivial) dominated splitting over the
closure of each $\Lambda_n$, 
though not necessarily over the closure of $\Lambda$.  

Thus, as a direct consequence of \cite{BV} and Theorem~\ref{t.main}, we
get:

\begin{corol} %\label {c.main}
There is a residual set $\cR \subset \Diff^1_m(M)$ such that for every $f \in \cR$, 
either $m(\NUH(f))=0$
or there is a global dominated splitting.
\end{corol}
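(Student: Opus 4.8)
The plan is to let $\cR$ be the intersection of the residual set furnished by Theorem~\ref{t.main} with the residual set underlying the Bochi--Viana dichotomy recalled above; being a finite intersection of residual sets in the Baire space $\Diff^1_m(M)$, it is residual. I will show that for $f\in\cR$ with $m(\NUH(f))>0$ the dominated splitting that the \cite{BV} argument produces over the closure of some $\Lambda_n$ is in fact already defined over all of $M$.

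So fix such an $f$. Since $\NUH(f)$ and $Z$ are disjoint by definition, the decomposition $M = Z\sqcup\Lambda$ mod~$0$ gives $\NUH(f)\subseteq\Lambda=\bigcup_n\Lambda_n$ mod~$0$; as the $\Lambda_n$ increase and $m(\NUH(f))>0$, countable additivity yields an index $n_0$ with $m(\Lambda_{n_0}\cap\NUH(f))>0$. Moreover, because the condition defining $\Lambda_n$ in \cite{BV} is required for all $k\in\Z$, the set $\Lambda_n$ is $f$-invariant (modulo a null set, which we discard), and hence so is $\Lambda_{n_0}\cap\NUH(f)$.

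Now comes the only genuinely new input: by the second alternative of Theorem~\ref{t.main}, the orbit of $m$-almost every point of $\NUH(f)$ is dense in $M$. Pick a point $x$ in the positive-measure set $\Lambda_{n_0}\cap\NUH(f)$ whose full orbit is dense in $M$. That orbit lies in $\Lambda_{n_0}$, so $M = \cl(\{f^n x : n\in\Z\}) \subseteq \cl(\Lambda_{n_0}) \subseteq M$, i.e.\ $\cl(\Lambda_{n_0})=M$. Therefore the uniform, nontrivial dominated splitting over $\cl(\Lambda_{n_0})$ (which exists by the uniform continuity of dominated splittings, as recalled above) is a global dominated splitting on $M$; its nontriviality persists to the closure because the dimensions of the subbundles are locally constant and are already nontrivial over $\Lambda_{n_0}$.

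The deduction is essentially bookkeeping — the conceptual content is entirely in Theorem~\ref{t.main} — so I foresee no real obstacle. The only points requiring a little care are the invariance (mod~$0$) of $\Lambda_{n_0}$ and the routine measure-theoretic selection of one orbit that is simultaneously contained in $\Lambda_{n_0}$ and dense in $M$.
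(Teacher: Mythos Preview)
Your argument is correct and is exactly the ``direct consequence'' the paper alludes to: intersect the residual sets from Theorem~\ref{t.main} and \cite{BV}, use the dense-orbit clause of Theorem~\ref{t.main} to find a point of some $\Lambda_{n_0}$ with dense orbit, and conclude $\cl(\Lambda_{n_0})=M$ so that the dominated splitting already provided over $\cl(\Lambda_{n_0})$ is global. The paper gives no further details beyond calling the corollary a direct consequence, so there is nothing to compare.
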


Sometimes there are topological obstructions to the existence of global
dominated splittings.  For example, since the tangent bundle of even
dimensional spheres admits no non-trivial
invariant subbundle\footnote{Suppose the sphere $S^{2k}$
has a non-trivial field $E$ of $k$-planes, with $0<n<2k$.
Using that $S^{2k}$ is simply connected, we can continuously orient the planes.
Thus the Euler class $e(E)$ is well-defined in $H^n(S^{2k};\Z) = \{0\}$.
Let $F$ be the field of $(2k-n)$-planes orthogonal to $E$,
oriented so that $TS^{2k} = E \oplus F$.
Then $2 = e(TS^{2k}) = e(E) \smile e(F) = 0 \smile 0 = 0$,
contradiction.
We thank Daniel Ruberman for explaining this to us.},
the corollary implies that for the
generic $f \in \Diff^1_m(S^{2k})$,
there is at least one zero Lyapunov exponent at almost every point.
(For $k=1$ this follows from the Ma\~n\'e--Bochi Theorem \cite{B_ETDS}.)

Let us remark that in the \emph{symplectic} case
a stronger statement holds:
$C^1$-generic symplectomorphisms 
are either ergodic and Anosov or have at least two zero Lyapunov exponents at almost every point;
see \cite{B_JIMJ}.

\subsection{More New Generic Properties}

As mentioned before, the proof of Theorem \ref {t.main} depends on
some new results about the regularity of the
dependence of certain measurable objects with respect to the dynamics.

The most basic and abstract of such results (Theorem~\ref{t.CED})
shows that \emph{a generic $f$ in $\Diff^r_m(M)$ is a continuity point of the
ergodic decomposition of Lebesgue measure.} 
More precisely, if $f$ is generic then for every $C^r$-nearby map $g$,
the ergodic decompositions of $m$ with respect to $f$ and $g$
are close.

This result will be used to show, for $C^r$-generic $f$:
\begin{itemize}
\item the existence of appropriate continuations of invariant sets
(Theorem~\ref{t.PIS}),
\item the $L^1$-continuity of certain dynamically defined
functions, in particular
the Lyapunov exponents (Theorem~\ref{t.CLS}).
\end{itemize}
These theorems work for any $r \ge 1$ (and, in a certain sense, also for $r=0$),
and even for other measures (see Remark~\ref{r.other measures}),
and we believe they have independent interest.

\subsection{Main Ideas of the Proof of Theorem~\ref{t.main}}\label{ss.outline}

Let us explain the proof of the main result in a brief and simplified way.

Let $f$ be a $C^1$-generic volume-preserving diffeomorphism.
Assume that $\NUH_i(f) = \{\lambda_i > 0 > \lambda_{i+1}\}$ has positive measure for some $i$.
By \cite{BV}, we can take a Borel subset $\Lambda \subset \NUH_i(f)$ 
with $m \left( \NUH_i(f) \setminus \Lambda\right) \ll 1$
where the splitting %$E^{cu} \oplus E^{cs}$ 
that separates positive from negative Lyapunov exponents is (uniformly) 
dominated.

Despite $f$ being only $C^1$, domination allows us to find Pesin manifolds
for the points on $\Lambda$.
More precisely, there are certain non-invariant sets $\block(f,\ell)$, called \emph{Pesin blocks},
such that if $x\in \block(f,\ell)$ then the Pesin manifolds $W^s(x)$ and $W^u(x)$ have ``size'' at least $r(\ell)$;
moreover $m \left( \Lambda \setminus \block(f,\ell) \right) \to 0$ and $r(\ell)\to 0$ as $\ell\to \infty$. 
The Pesin blocks are explicitly defined in terms of certain Birkhoff sums,
so it will be possible later to control how they vary with the diffeomorphism.

We fix $\ell$ large and $0 < r \ll r(\ell)$.
We then find 
a hyperbolic periodic point $p$ such that 
the ball $B(p,r)$ has a positive measure intersection with the Pesin block $\block(f,\ell)$,
and $p$ itself is also in $\block(f,\ell)$.
In order to find such $p$ we use 
an improved version of the Ergodic Closing Lemma due to \cite{ABC}.\footnote{Although
the situation is similar to Katok's Closing Lemma, we don't use it.}

We consider the \emph{Pesin heteroclinic class of $p$},
a concept introduced in the paper \cite{RRTU}.
It is the set $\class(p, f)$ of the points $x\in M$ whose Pesin manifolds $W^u(x)$ and $W^s(x)$
intersect respectively $W^s(\cO(p))$ and $W^u(\cO(p))$ in a transverse way.
In our situation, 
the class $\class(p, f)$ has positive measure,
because Pesin manifolds are much longer than $r$ for points in the block.

Using the new generic properties (Theorems~\ref{t.PIS} and \ref{t.CLS})
it is possible to show that the situation is \emph{robust}:
For any $g$ sufficiently close to $f$, the new Pesin block $\block(g,\ell)$ is close
to the old one,  % $\block(f,\ell)$,  % (that is, the symmetric difference has small measure),
the continuation $p^g$ of the periodic point $p$ belongs to $\block(g,\ell)$,
%and form a nearly $r$-dense subset of $\block(g,\ell)$.
and the ball $B(p^g,r)$ has a positive measure intersection with $\block(g,\ell)$.
In particular, the new Pesin heteroclinic class $\class(p^g, g)$ has positive measure.

Using \cite{Avila}, we choose a $C^2$ volume-preserving diffeomorphism $g$ close to $f$.
This permits us to apply the ergodicity criterion from \cite{RRTU}
and conclude that $g$ restricted to $\class(p^g, g)$ is ergodic.

We get an ergodic component for the the original map $f$ 
using that its ergodic decomposition varies continuously (Theorem~\ref{t.CED}).
We are able to show that this component is in fact $\NUH_i(f)$.
To show that this set has a positive measure intersection with
every nonempty open set in the manifold,
we use the $C^1$-generic property that stable manifolds of periodic points are dense.
Another $C^1$-generic property says that every pair of periodic points is homoclinically connected,
and using this we can show the index $i$ is unique.

\subsection{Questions}

We still don't understand well ergodic properties of $C^1$-generic volume-preserving diffeomorphisms.
Even in dimension $2$ the picture is incomplete:
By \cite{B_ETDS}, the generic diffeomorphism is either Anosov or has zero 
Lyapunov exponents almost everywhere;
but we don't know much about the dynamics in the second alternative -- are those maps\footnote{By
\cite{BC}, there are points with dense orbits, but we don't know if they form a positive measure set.} 
ergodic, for example?
%Most of the current tools need some hyperbolicity to begin with.

Perhaps we may separate the more familiar nonuniformly hyperbolic world from the unexplored world of all zero exponents. 
Optimistically, we conjecture that generically zero exponents cannot appear 
along with nonzero exponents in a positive measure set.
In view of our results, this question can be posed as follows:

\begin{conj}
For generic $f\in \Diff_m^1(M)$, 
either $f$ has all exponents zero at Lebesgue almost every point,
or $f$ is ergodic and \emph{nonuniformly Anosov}\footnote{This term was coined by Martin Andersson.},
that is, nonuniformly hyperbolic with a global
dominated splitting separating the positive exponents
from the negative ones.
\end{conj}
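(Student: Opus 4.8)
The plan is to take Theorem~\ref{t.main} (and the corollary following it) as the starting point and reduce the conjecture to one missing structural statement. Write $Z$ for the invariant set where all Lyapunov exponents vanish ($\ell(x)=1$), $H=\NUH(f)$ for the set where none vanish, and $Y=M\setminus(Z\cup H)$ for the ``mixed'' set, where $\ell(x)>1$ but at least one exponent is zero; all three are $f$-invariant. By Theorem~\ref{t.main}, for generic $f$ either $m(H)=0$, or $H$ coincides mod~$0$ with a single ergodic component $\NUH_i(f)$, is essentially dense, and carries a global dominated splitting $E\oplus F$ separating the positive from the negative exponents. The conjecture follows from two facts about generic $f$: (A) $m(Y)=0$; and (B) $m(Z)$ and $m(H)$ cannot both be positive. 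Indeed (A) gives $M=Z\sqcup H$ mod~$0$, and then (B) forces $m(Z)=1$ (the first alternative of the conjecture) or $m(H)=1$, in which case $f$ is ergodic by Theorem~\ref{t.main} and is nonuniformly Anosov by the corollary together with the fact that the dominated splitting produced separates the signs. So the first step is this reduction, which is bookkeeping once Theorem~\ref{t.main} is available.

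The second step attacks~(A). By \cite{BV}, over $Y$ the Oseledets splitting is nontrivial and dominated, so $Y$ is exhausted by the sets $\Lambda_n$ on whose closures there is a genuine dominated splitting, and a point of $Y$ carries a zero exponent inside one of the corresponding dominated blocks. If that block has dimension $\geq 2$ one invokes the trivial-or-dominated dichotomy of \cite{BV} --- already a generic property --- to subdivide it repeatedly, until the zero exponent sits in a one-dimensional continuous invariant subbundle over an invariant set of positive measure. One then tries to exclude this by a Baraviera--Bonatti-type perturbation along that subbundle that pushes the exponent off zero; the $L^1$-continuity of Lyapunov exponents (Theorem~\ref{t.CLS}) and the continuity of the ergodic decomposition (Theorem~\ref{t.CED}) are exactly what make ``this exponent is nonzero on a set of definite measure'' a $C^1$-open, hence generically valid, condition. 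The delicate bookkeeping here is to keep the perturbation localized to $Y$ without leaking mass into $Z$ or $H$; I expect this to be hard but not beyond the techniques already in play.

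The third step is~(B), and this is where I expect the real obstacle to lie --- it is the conceptual content of the conjecture, and the reason it is only conjectured. Suppose $m(Z)>0$ and $m(H)>0$; then $H=\NUH_i(f)$ is ergodic and essentially dense, $TM=E\oplus F$ is a global dominated splitting, and this splitting in particular restricts over the invariant set $Z$, where nevertheless all exponents vanish. One would like to show that a generic perturbation forces $Z$ into the ergodic component $H$ --- exploiting that $H$ is essentially dense, that stable manifolds of periodic points are $C^1$-generically dense, and that all periodic points are $C^1$-generically homoclinically related --- or else that the exponents on $Z$ can be perturbed away from zero along $E$ and along $F$. The difficulty is that $Z$ and $H$ are both invariant and may, a priori, coexist: the available $C^1$ perturbation tools either \emph{create} hyperbolicity only on thin or periodic sets (blenders, Ma\~n\'e-style twists) or \emph{destroy} it (the Ma\~n\'e--Bochi mechanism), and none of them obviously turns a full positive-measure zero-exponent region already equipped with a dominated splitting into a hyperbolic one, nor connects it ergodically to $H$. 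Pesin theory and the smoothing of \cite{Avila}, which drive Theorem~\ref{t.main}, do not reach $Z$ at all, there being no hyperbolicity there to begin with. Finding a mechanism by which, under $C^1$ perturbation, the nonuniformly hyperbolic component must absorb or eliminate an all-zero-exponents region seems to require a genuinely new idea --- which is why step~(B), rather than the reduction in step~1 or the perturbative argument in step~2, is the crux.
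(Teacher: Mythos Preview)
This statement is presented in the paper as a \emph{conjecture}, not a theorem; the paper offers no proof of it and explicitly notes only that M.~A.~Rodriguez-Hertz has announced a proof in dimension~$3$. So there is no proof in the paper to compare your proposal against, and your text should be read as a strategy outline for an open problem rather than as a proof attempt to be checked.

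Your reduction in step~1 is correct and is indeed the natural way to organize the question once Theorem~\ref{t.main} and its corollary are available. You also correctly identify (B) as the conceptual heart of the matter and honestly acknowledge that no known mechanism handles it. That assessment matches the state of the art.

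However, your step~(A) is more problematic than you indicate. First, the ``repeated subdivision'' you invoke cannot isolate a zero exponent in a one-dimensional subbundle in general: the Oseledets space corresponding to the zero exponent already has all its exponents equal, so domination gives no further refinement of it; its dimension may exceed one. Second, a Baraviera--Bonatti perturbation moves the \emph{integrated} exponent along a center bundle, not the exponent at almost every point; turning this into the statement ``generically the center exponent is nonzero almost everywhere on~$Y$'' requires an additional argument you have not supplied. Third, your claim that Theorems~\ref{t.CED} and~\ref{t.CLS} make ``nonzero on a set of definite measure'' a $C^1$-open condition is not right as stated: those continuity results hold only on a residual set, so they cannot be invoked to prove openness at an arbitrary perturbed map. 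You would need a more careful semicontinuity argument, and even then density is a separate issue. In short, (A) is not a routine perturbative step either; both (A) and (B) are genuinely open.
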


M.~A.~Rodriguez-Hertz has announced a proof of this conjecture in dimension
$3$ which uses the results of this paper.

Notice that the conjecture is false in the symplectic case:
there are nonempty open sets $\cU$ of partially hyperbolic symplectomorphisms that are not Anosov,
and it is shown in \cite{B_JIMJ} that for generic maps in $\cU$ 
the Lyapunov exponents along the center direction vanish.
Even so, it is possible to show that generic partially hyperbolic diffeomorphisms are ergodic:
see \cite{ABW}.

\subsection{Organization of the Paper}

The remaining of this paper is organized as follows: 
In Section~\ref{s.prelim} we collect a few measure-theoretic facts to be used throughout the paper. 
In Section~\ref{s.new}, we
state precisely and prove Theorems \ref{t.CED}, \ref{t.PIS}, \ref{t.CLS}.
Section~\ref{s.ingredients} contains more preliminaries:
\begin{itemize}
\item In \S\ref{ss.c1 old stuff}, we recall several results of the ``$C^1$-generic theory'' of conservative diffeomorphisms,
especially some from \cite{BC} and \cite{ABC}.
\item In \S\ref{ss.dom pesin}, we explain the ``$C^1$-dominated Pesin theory'', and give an useful
technical tool (Lemma~\ref{l.block}) to estimate the size of Pesin blocks.
This part does not use preservation of volume.
\item In \S\ref{ss.RRTU}, we recall the ergodicity criterion from \cite{RRTU}. 
\end{itemize}
Then in Section~\ref{s.proof} we give the proof of Theorem \ref{t.main}.  
As explained in \S\ref{ss.outline},
the regularity results of Section~\ref{s.new} are used repeatedly, basically
to allow us to tie the $C^1$ and $C^2$ worlds through continuity.

\begin{ack}
We benefited from talks with F.~Abdenur, F.\ and M.~A.\ Rodriguez-Hertz,
and A.~Wilkinson.
We thank a referee for various corrections and suggestions.
\end{ack}

%%%%%%%%%%%%%%%%%%%%%%%%%%%%%%%%%%%%%%%%%%%%%%%%%%%%
\section{Measure-Theoretic Preliminaries} \label{s.prelim}

\subsection{The Space of Probability Measures}

If $X$ is any compact Hausdorff space, we let $\cM(X)$ be the set of Borel probability measures on $X$,
endowed with the usual weak-star topology.
This is a Hausdorff compact space itself.
In  particular we may consider the space $\cM(\cM(X))$,
whose elements will be indicated by bold greek letters.

A fact that we will use several times is that if a sequence $\mu_n \to \mu$ in $\cM(X)$ 
then
$\mu(Y) \le \liminf \mu_n(Y)$ if $Y$ is open;
$\mu(Y) \ge \limsup \mu_n(Y)$ if $Y$ is closed;
$\mu(Y) =   \lim    \mu_n(Y)$ if $Y$ is a Borel set with $\mu(\partial Y) = 0$.

\subsection{Measure-Valued integration} 

\begin{propdef}
Let $(Y, \cY, \lambda)$ be a probability space
and $(Z, \cZ)$ be a measurable space.
Let $\mu_y$ be probability measures on $(Z,\cZ)$,
defined for $\lambda$-almost every $y \in Y$.
Suppose that 
\begin{equation}\label{e.measurability}
	\text{for each $B \in \cZ$, the function $y \in Y \mapsto \mu_y(B) \in \R$ is $\cY$-measurable.}
\end{equation}
Then there is a unique probability measure $\bar \mu$ on $(Z,\cZ)$ such that 
for any bounded $\cZ$-measurable function $\phi: Z \to \R$, we have\footnote{It 
is part of the statement that the integrand $y \mapsto \int \phi \,  d\mu_y$ is measurable.}
\begin{equation}\label{e.integration}
\int_Y \int_Z \phi(z) \, d\mu_y(z) \, d\lambda(y) = \int_Z  \phi(z) \, d\bar\mu(z) .
\end{equation}
We call $\bar \mu$ the \emph{integral}
of the function $y \to \mu_y$,
and we indicate
$$
\bar \mu = \int \mu_y \, d\lambda(y) \, .
$$
\end{propdef}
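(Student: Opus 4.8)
The plan is to define $\bar\mu$ by the only admissible formula and then verify the integration identity and the measurability claim by the usual approximation hierarchy: indicator functions, then simple functions, then bounded nonnegative measurable functions, and finally arbitrary bounded measurable functions.

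First I would set $\bar\mu(B) := \int_Y \mu_y(B)\, d\lambda(y)$ for every $B \in \cZ$. The integrand is defined for $\lambda$-almost every $y$, is $\cY$-measurable by \eqref{e.measurability}, and takes values in $[0,1]$, so the integral makes sense. Checking that $\bar\mu$ is a probability measure is routine: $\bar\mu(\emptyset)=0$ and $\bar\mu(Z)=1$ are immediate, while countable additivity follows from the monotone convergence theorem applied to the increasing partial sums $\sum_{k\le n}\mu_y(B_k)$ for a pairwise disjoint sequence $(B_k)$ in $\cZ$. Uniqueness is then automatic, since \eqref{e.integration} specialized to $\phi=\charac_B$ reads $\bar\mu(B)=\int_Y\mu_y(B)\,d\lambda(y)$, which forces $\bar\mu$ to be the measure just defined.

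Next I would prove \eqref{e.integration}, together with the measurability of $y\mapsto\int_Z\phi\,d\mu_y$, by induction on the complexity of $\phi$. For $\phi=\charac_B$ both statements are precisely the definition of $\bar\mu$ and hypothesis \eqref{e.measurability}; both are preserved under finite real-linear combinations, hence hold for every $\cZ$-measurable simple function. For a bounded nonnegative $\cZ$-measurable $\phi$, pick simple functions with $0\le\phi_1\le\phi_2\le\cdots$ and $\phi_n\uparrow\phi$ pointwise; then $y\mapsto\int_Z\phi\,d\mu_y=\lim_n\int_Z\phi_n\,d\mu_y$ is a pointwise limit of $\cY$-measurable functions, hence $\cY$-measurable, and repeated use of the monotone convergence theorem (over $(Z,\mu_y)$ for each fixed $y$, then over $Y$, then over $(Z,\bar\mu)$) gives
\[
\int_Y\int_Z\phi\,d\mu_y\,d\lambda(y)=\lim_n\int_Y\int_Z\phi_n\,d\mu_y\,d\lambda(y)=\lim_n\int_Z\phi_n\,d\bar\mu=\int_Z\phi\,d\bar\mu .
\]
Finally, for an arbitrary bounded $\cZ$-measurable $\phi$ I would apply the previous case to $\phi^+$ and $\phi^-$ and subtract; this is legitimate because all the integrals involved are finite ($\phi$ is bounded and all the measures are probabilities), and it yields both the measurability of $y\mapsto\int_Z\phi\,d\mu_y$ and the identity \eqref{e.integration}.

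The only point requiring any care is the measurability of $y\mapsto\int_Z\phi\,d\mu_y$ once $\phi$ is no longer simple; the pointwise-limit argument above settles it, and alternatively one could invoke the monotone class theorem for the class of bounded $\cZ$-measurable $\phi$ that satisfy both conclusions. Throughout, one should keep in mind that $\mu_y$ is prescribed only for $\lambda$-almost every $y$: redefining $y\mapsto\mu_y$ on a $\lambda$-null set (say, to a fixed Dirac mass on $Z$) changes none of the integrals over $Y$ and hence affects neither $\bar\mu$ nor the validity of \eqref{e.integration}.
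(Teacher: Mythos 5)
Your proof is correct, and it takes a genuinely different route from the paper's. The paper invokes Johnson's ``skew Fubini theorem'' to first produce a measure $\rho$ on the product $(Y\times Z,\ \cY\times\cZ)$ satisfying $\int_Y\int_Z\psi\,d\mu_y\,d\lambda=\int_{Y\times Z}\psi\,d\rho$ for all bounded $\cY\times\cZ$-measurable $\psi$, and then defines $\bar\mu$ as the push-forward of $\rho$ under the projection $Y\times Z\to Z$. You instead give the direct, self-contained argument: set $\bar\mu(B)=\int_Y\mu_y(B)\,d\lambda(y)$, check countable additivity via monotone convergence, and then run the standard hierarchy (indicators $\to$ simple functions $\to$ bounded nonnegative $\phi$ by monotone approximation $\to$ general bounded $\phi$ via $\phi^+-\phi^-$), establishing the measurability of $y\mapsto\int_Z\phi\,d\mu_y$ along the way. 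The paper's approach yields a stronger object (the joint measure $\rho$, of which $\bar\mu$ is merely the $Z$-marginal) and connects the construction to the disintegration circle of ideas flagged in its footnote, at the cost of citing an external result; your approach proves exactly what the proposition asks for, using only monotone convergence. Both are complete, and your reduction of uniqueness to the case $\phi=\charac_B$ is exactly right.
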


\begin{proof}
By the ``skew'' Fubini theorem from~\cite{Johnson},
there is a measure $\rho$ on $(Y \times Z, \cY \times \cZ)$ such that
$$
\int_Y \int_Z \psi(y,z) \, d\mu_y(z) \, d\lambda(y) = \int_{Y \times Z} \psi(y,z) \, d\rho(y,z) 
$$
for any bounded $\cY \times \cZ$-measurable function $\psi$.\footnote{Observe that
a converse to this result is related to Rokhlin Desintegration Theorem
\cite[\S C.6]{BDV_book}.}
We define $\bar \mu$ as the push-forward of $\rho$ by the projection $Y\times Z \to Z$.
\end{proof}

Let us observe a few properties of the integral for later use:
\begin{itemize}
\item The integral behaves well under push-forwards.
More precisely, if $W$ is another measurable space and $F: Z \to W$ is a measurable map
then $F_* \bar\mu = \int F_* \mu_y \, d\mu(y)$.

\item
Formula \eqref{e.integration}
also holds for $\bar \mu$-integrable functions $\phi$.
More precisely, if $\phi \in L^1(\bar \mu)$ (so $\phi$ is an equivalence class of functions)
then $y \mapsto \int \phi \, d\mu_y$ is a well-defined element of $L^1(\lambda)$
whose integral is given by \eqref{e.integration}.
This follows easily from the Monotone Convergence Theorem.
\end{itemize}

Another observation is that an integral can be approximated by
finite convex combinations:

\begin{lemma}\label{l.combination}
In the situation above, 
assume in addition that $Z$ is a compact Hausdorff space and $\cZ$ is the Borel $\sigma$-algebra.
Then for any neighborhood $\cN$ of $\bar \mu = \int \mu_y \, d\lambda(y)$ in $\cM(Z)$,
there exist $y_1$, \ldots, $y_k \in Y$ and positive numbers $c_1$, \ldots, $c_k$ with $\sum c_i = 1$
such that the measure $\sum c_i \mu_{y_i}$ belongs to $\cN$.
\end{lemma}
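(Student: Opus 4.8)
The plan is to reduce the statement to a standard fact about the weak-star topology, namely that finitely supported measures whose ``averaging'' approximates a given measure-valued integral can be extracted from a Riemann-type partition of the parameter space $Y$. The key point is that a neighborhood basis of $\bar\mu$ in $\cM(Z)$ is given by sets of the form $\cN = \{\nu : |\int \phi_j \, d\nu - \int \phi_j \, d\bar\mu| < \eps, \ j = 1, \ldots, N\}$ for finitely many continuous functions $\phi_1, \ldots, \phi_N \colon Z \to \R$ and $\eps > 0$ (here we use that $Z$ is compact Hausdorff, so $C(Z)$ separates points and the weak-star topology is the one induced by $C(Z)$). So it suffices to find $y_1, \ldots, y_k$ and $c_1, \ldots, c_k > 0$ with $\sum c_i = 1$ such that $\sum_i c_i \int \phi_j \, d\mu_{y_i}$ is within $\eps$ of $\int_Z \phi_j \, d\bar\mu$ simultaneously for all $j$.

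The second step is to invoke formula \eqref{e.integration}: for each fixed $j$, the function $\Phi_j \colon Y \to \R$ defined by $\Phi_j(y) = \int_Z \phi_j \, d\mu_y$ is measurable and bounded (by $\sup_Z |\phi_j|$, since $\mu_y$ is a probability measure), and $\int_Y \Phi_j \, d\lambda = \int_Z \phi_j \, d\bar\mu$. Thus I have reduced the problem to the following purely measure-theoretic claim: given finitely many bounded measurable functions $\Phi_1, \ldots, \Phi_N$ on a probability space $(Y, \cY, \lambda)$, for every $\eps > 0$ there exist points $y_1, \ldots, y_k \in Y$ and weights $c_1, \ldots, c_k > 0$ with $\sum c_i = 1$ such that $|\sum_i c_i \Phi_j(y_i) - \int_Y \Phi_j \, d\lambda| < \eps$ for every $j$.

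To prove this claim, I would consider the single bounded measurable map $\Phi = (\Phi_1, \ldots, \Phi_N) \colon Y \to \R^N$, whose image lies in some compact box $Q \subset \R^N$. Partitioning $Q$ into finitely many Borel pieces $Q_1, \ldots, Q_k$ of diameter less than $\eps$ and setting $A_i = \Phi^{-1}(Q_i)$, I discard the $A_i$ of measure zero, put $c_i = \lambda(A_i)$ for the rest, and choose any point $y_i \in A_i$. Then for each $j$, the difference $|\sum_i c_i \Phi_j(y_i) - \int_Y \Phi_j \, d\lambda| = |\sum_i \int_{A_i} (\Phi_j(y_i) - \Phi_j(y)) \, d\lambda(y)| \le \sum_i \lambda(A_i) \cdot \eps = \eps$, since $\Phi_j(y_i)$ and $\Phi_j(y)$ differ by at most the diameter of $Q_i$ whenever $y \in A_i$. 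Rescaling $\eps$ by $N$ or a comparable constant at the outset handles any bookkeeping. This gives the measure $\sum_i c_i \mu_{y_i} \in \cN$, which is the desired conclusion.

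I do not anticipate a genuine obstacle here; the only point requiring a little care is making sure the parameters line up — i.e., that the finitely many continuous test functions defining an arbitrary neighborhood $\cN$ are fixed first, then fed into the partition argument — and that one remembers the $y_i$ must be chosen inside sets of positive measure so that the $c_i$ are genuinely positive and sum to one. Everything else is the elementary ``simple function approximation of a bounded measurable function,'' dressed up in the language of vector-valued maps and weak-star neighborhoods.
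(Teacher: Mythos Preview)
Your proposal is correct and follows essentially the same approach as the paper: reduce to a basic weak-star neighborhood determined by finitely many continuous test functions $\phi_j$, pass to the bounded measurable functions $\Phi_j(y) = \int \phi_j \, d\mu_y$, and use a simple-function/partition argument to produce the points $y_i$ with weights $c_i = \lambda(A_i)$. The only cosmetic difference is that you package the $\Phi_j$ into a single map into $\R^N$ and partition the target box, whereas the paper phrases it as a common refinement yielding simultaneous upper and lower simple-function bounds for each $\Phi_j$; the content is the same.
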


\begin{proof}
We can assume that the neighborhood of $\bar \mu$ is of the form
$$
\cN = \left\{\nu \in \cM(Z) ; \; \left|\int \phi_j \, d\nu - \int \phi_j \, d\bar\mu  \right|< \eps 
\  \forall j=1,\ldots,n \right\} ,
$$
for some $\eps>0$ and continuous functions $\phi_1$, \ldots, $\phi_n$.
Define $\Phi_j : Y \to \R$ by $\Phi_j(y) = \int \phi_j \, d\mu_y$.
Since those functions are bounded and measurable, we can approximate them by simple functions.
Take a measurable partition $Y=E_1 \sqcup \cdots \sqcup E_k$
and numbers $a_{ij} \le b_{ij}$ (where $1\le i \le k$, $1 \le j \le n$)
such that for each $j$,
$$
\sum_i a_{ij} \charac_{E_i} \le \Phi_j \le \sum_i b_{ij} \charac_{E_i} 
\quad \text{and} \quad
\sum_i (b_{ij} - a_{ij}) \lambda(E_i) < \eps. 
$$
Define $c_i = \lambda(E_i)$ and choose points $y_i \in E_i$.
Since $\int \Phi_j \, d\lambda = \int \phi_j \, d\bar\mu$,
it follows that the measure $\sum c_i \mu_{y_i}$ belongs to $\cN$.
\end{proof}

Let us check the measurability condition~\eqref{e.measurability}
in the case that the function $y \to \mu_y$ is the identity:

\begin{lemma}\label{l.measurable id}
Let $Z$ be a compact Hausdorff space. % and let $Y=\cM(Z)$.
Then for every Borel set $B \subset Z$, 
the function $\mu \in \cM(Z) \mapsto \mu(B) \in \R$ is Borel-measurable.
\end{lemma}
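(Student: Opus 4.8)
The plan is to use the standard monotone-class / Dynkin $\pi$-$\lambda$ argument. Let $\cB$ denote the Borel $\sigma$-algebra of $Z$, and let
$$
\cC = \big\{ B \in \cB \; ; \; \mu \mapsto \mu(B) \text{ is Borel-measurable on } \cM(Z) \big\}.
$$
First I would check that $\cC$ contains every open set $U \subseteq Z$. For this, note that $U$ can be written as an increasing union $U = \bigcup_n K_n$ of closed sets (e.g.\ $K_n = \{z : d(z, Z\setminus U) \ge 1/n\}$ if $Z$ is metrizable; in the general compact Hausdorff case one uses that $U$ is a directed union of cozero sets, or argues directly with continuous functions as below). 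By the characterization of weak-star convergence recalled in the excerpt, for a closed set $K$ the map $\mu \mapsto \mu(K)$ is upper semicontinuous, hence Borel; and for an open set $U$ the map $\mu \mapsto \mu(U) = \sup_n \mu(K_n)$ (or, more cleanly, $\mu(U) = \sup\{\int \phi\, d\mu : \phi \in C(Z), 0 \le \phi \le \charac_U\}$, a supremum which can be taken over a countable family) is lower semicontinuous, hence Borel. Thus all open sets — and all closed sets — lie in $\cC$.

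Next I would verify that $\cC$ is a $\lambda$-system (Dynkin system): it contains $Z$; it is closed under complements, since $\mu(Z \setminus B) = 1 - \mu(B)$; and it is closed under countable increasing unions, since if $B_n \uparrow B$ with each $B_n \in \cC$ then $\mu(B) = \lim_n \mu(B_n)$ is a pointwise limit of measurable functions, hence measurable. The open sets form a $\pi$-system (closed under finite intersections) generating $\cB$. By Dynkin's $\pi$-$\lambda$ theorem, $\cC \supseteq \sigma(\text{open sets}) = \cB$, which is the claim.

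The only genuinely delicate point is the first step — showing $\mu \mapsto \mu(U)$ is Borel for open $U$ — and specifically making sure the argument does not secretly require metrizability, since the lemma is stated for arbitrary compact Hausdorff $Z$. The clean fix is the identity $\mu(U) = \sup\{\int \phi\, d\mu : \phi \in C(Z),\ 0 \le \phi \le \charac_U\}$, which holds by inner regularity of Borel measures on compact Hausdorff spaces (Radon measures) together with Urysohn's lemma; each $\mu \mapsto \int \phi\, d\mu$ is continuous by definition of the weak-star topology, so $\mu \mapsto \mu(U)$ is a supremum of continuous functions and thus lower semicontinuous, in particular Borel. (If one is content to assume $Z$ metrizable — which is the only case needed in this paper — the increasing-union-of-closed-sets argument suffices and no Urysohn gymnastics are required.) Everything after that is the routine Dynkin bookkeeping above.
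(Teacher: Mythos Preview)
Your proof is correct, and it takes a genuinely different route from the paper's. You handle open sets and then invoke Dynkin's $\pi$--$\lambda$ theorem to pass to all Borel sets; the paper instead observes that the class $\cC$ is closed under nested countable intersections (so it contains all $G_\delta$ sets) and then finishes by appealing to the regularity fact that every Borel set $B$ admits a $G_\delta$ hull $\tilde B \supset B$ with $\mu(\tilde B) = \mu(B)$ for \emph{every} Borel probability $\mu$ simultaneously. Your Dynkin argument is the more standard textbook route and avoids that last regularity step, which is the less routine ingredient in the paper's proof. You also flag and address the metrizability issue in the ``open sets'' step more carefully than the paper does: the paper simply asserts the existence of a sequence $\phi_n \to \charac_B$ pointwise, which in a non-metrizable compact Hausdorff space needs exactly the Urysohn/inner-regularity argument you sketch (and your observation that a supremum over an arbitrary family of continuous functions is lower semicontinuous, hence Borel, is what makes the uncountable sup harmless). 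For the purposes of this paper only the metrizable case $Z = M$ or $Z = \cM(M)$ is ever used, so either argument suffices, but yours is self-contained and slightly more robust.
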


\begin{proof}
If $B$ is an open set, 
there exists a sequence of continuous functions $\phi_n$ 
(namely, a sequence that converges pointwise to $\charac_{B}$)
such that the 
function $\mu \mapsto \mu(B)$ is the pointwise limit of the sequence of continuous functions 
$\mu \mapsto \int \phi_n \, d\mu$, and so it is measurable.
That is, the class of Borel sets $B \subset Z$ such that $\mu  \mapsto \mu(B)$ is measurable
includes all open sets.
This class is evidently closed under nested intersection,
and thus it contains all $G_\delta$ sets.
Now, if $B \subset Z$ is any Borel set then there exists a $G_\delta$ set $\tilde B \supset B$
such that $\mu(\tilde B) = \mu(B)$ for every Borel measure $\mu$,
and thus we are done.
\end{proof}

Thus if $Z$ is a compact Hausdorff space
and $\blambda \in \cM(\cM(Z))$ then 
$\mu = \int \nu \, d\blambda(\nu)$ is a well-defined element of $\cM(Z)$.
We say that \emph{$\blambda$ is a decomposition of $\mu$.}

\subsection{Ergodic Decomposition}

Let $f: X \to X$ is a continuous map on a compact metric space $X$.
We let $\cM(f) \subset \cM(X)$ denote the set of $f$-invariant probabilities;
and let $\cM_\mathrm{erg}(f) \subset \cM(f)$ denote the set of $f$-ergodic probabilities.
Both $\cM(f)$ and $\cM_\mathrm{erg}(f)$ are Borel subsets: the former is closed 
and the latter is a $G_\delta$.\footnote{See \cite[Prop.~1.3]{Phelps}.}

Given $\mu \in \cM(f)$,
we let $\bkappa_{f,\mu} \in \cM(\cM(X))$ 
be the \emph{ergodic decomposition}
of the measure $\mu$;
that is, the unique decomposition of $\mu$
such that $\bkappa_{f,\mu} (\cM_\mathrm{erg}(f)) = 1$.

According to \cite{Mane_book},
ergodic decompositions can be obtained as follows.\footnote{Most other proofs 
of the existence of the ergodic decomposition
are more abstract and rely on Choquet's theorem; see~\cite{Phelps}.} 
There exists a Borel subset $R_f \subset X$ that has full $\mu$ measure with respect to any $\mu \in \cM(f)$
such that for any $x \in R_f$, the measure
\begin{equation}\label{e.statistics}
\beta_x = \beta_{f,x} = \lim_{n\to \infty} \frac{1}{n} \sum_{j=0}^{n-1} \delta_{f^j x}
\quad \text{exists and is $f$-ergodic.}
\end{equation}
Then for any $\mu\in \cM(f)$, its push-forward by the
(evidently measurable) map $x\mapsto \beta_x$ is the ergodic decomposition $\bkappa_{f,\mu}$;
that is, $\bkappa_{f,\mu}(\cU) = \mu(\beta^{-1}(\cU))$
for any Borel set $\cU \subset \cM(X)$.
As an immediate consequence of Lemma~\ref{l.measurable id},
we obtain that the function $\mu \mapsto \bkappa_{f,\mu}$
satisfies the the measurability condition~\eqref{e.measurability}:

\begin{lemma} \label{l.measurable decomposition}
For any Borel set $\cU \subset \cM(X)$, the function
$\mu \in \cM(f) \mapsto \bkappa_{f,\mu}(\cU) \in \R$ is measurable.\footnote{However, the function
is not necessarily continuous: in general $\cM_\mathrm{erg}(f)$ is not closed,
and if $\mu_0$ is a non-ergodic accumulation point of ergodic measures 
then for some choice of $\cU$ the function $\mu \mapsto \bkappa_{f,\mu}(\cU)$ is not continuous at $\mu_0$.}
\end{lemma}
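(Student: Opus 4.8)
The plan is to reduce the claim to Lemma~\ref{l.measurable id} by invoking the explicit description of the ergodic decomposition as a push-forward. Recall that the construction recalled just above (following \cite{Mane_book}) provides a Borel set $R_f \subset X$, of full measure for every $\mu \in \cM(f)$, together with a measurable map $\beta = \beta_f \colon R_f \to \cM(X)$, $x \mapsto \beta_x$, such that $\bkappa_{f,\mu} = \beta_* \mu$ for all $\mu \in \cM(f)$. Hence, for a fixed Borel set $\cU \subset \cM(X)$, one has $\bkappa_{f,\mu}(\cU) = \mu(\beta^{-1}(\cU))$, so the function under study is simply $\mu \mapsto \mu(B)$ with $B := \beta^{-1}(\cU)$.

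The steps I would then carry out are the following. First, observe that $B$ is a Borel subset of $X$: since $\beta$ is measurable, $B$ is Borel in $R_f$ for the trace $\sigma$-algebra, and since $R_f$ is itself Borel in $X$, this forces $B$ to be Borel in $X$. Second, apply Lemma~\ref{l.measurable id} with $Z = X$ to conclude that $\mu \in \cM(X) \mapsto \mu(B)$ is Borel-measurable. Third, restrict this function to the subset $\cM(f) \subset \cM(X)$ (which is closed, hence Borel, so the trace Borel structure on $\cM(f)$ is the obvious one): the restriction is exactly $\mu \mapsto \bkappa_{f,\mu}(\cU)$, which is therefore measurable.

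I do not expect a genuine obstacle here, since the substantive work has already been done elsewhere: the push-forward description and the measurability of $x \mapsto \beta_x$ come from \cite{Mane_book}, while the real content — approximating $\charac_B$ by continuous functions when $B$ is open, passing to $G_\delta$ sets via nested intersections, and using outer regularity to reach an arbitrary Borel $B$ — is precisely what Lemma~\ref{l.measurable id} packages. The only things requiring a moment's care are bookkeeping points: that $\beta^{-1}(\cU)$ is genuinely Borel in $X$ (for which one uses that $R_f$ is Borel, not merely that $\beta$ is measurable on $R_f$), and that "measurable" for a function defined on $\cM(f)$ is understood with respect to the trace Borel $\sigma$-algebra, under which restriction of a measurable function stays measurable.
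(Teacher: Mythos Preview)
Your argument is correct and follows exactly the route the paper takes: the paper simply records that $\bkappa_{f,\mu}(\cU) = \mu(\beta^{-1}(\cU))$ and says the lemma is ``an immediate consequence of Lemma~\ref{l.measurable id}.'' You have merely spelled out the bookkeeping (Borelness of $\beta^{-1}(\cU)$ in $X$ via Borelness of $R_f$, and restriction to $\cM(f)$) that the paper leaves implicit.
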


%%%%%%%%%%%%%%%%%%%%%%%%%%%%%%%%%%%%%%%%%%%%%%%%%%%%
\section{Some New Generic Properties} \label{s.new}

\subsection{Generic Continuity of the Ergodic Decomposition}\label{ss.CED}

For an integer $r\ge 1$, let $\Diff^r(M)$ be the set of  $C^r$ diffeomorphisms with the $C^r$ topology.
Let also $\Diff^0(M)$ be the set of homeomorphisms, with the topology under which $f_n \to f$
if $f_n^{\pm 1} \to f^{\pm 1}$ uniformly.
Let $\Diff_m^r(M)$ be the set of elements of $\Diff^r(M)$ that leave invariant the measure $m$.
Then $\Diff^r(M)$ and $\Diff^r_m(M)$ are Baire spaces for any integer $r \ge 0$.

Recall that if $f \in \Diff^r(M)$ and $\mu \in \cM(f)$,
then $\bkappa_{f,\mu} \in \cM(\cM(M))$ 
indicates the ergodic decomposition of the measure $\mu$.
Most of the time we will work with diffeomorphisms $f$ that preserve Lebesgue measure $m$,
and we abbreviate $\bkappa_f = \bkappa_{f,m}$.

\begin{thm}%[Continuity of Ergodic Decomposition] 
\label{t.CED}
Fix an integer $r\ge 0$.
The points of continuity of the map
$$f \in \Diff_m^r(M) \mapsto \bkappa_{f} \in \cM(\cM(M))$$
form a residual subset.
\end{thm}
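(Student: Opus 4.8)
The plan is to exploit a general principle: a map from a Baire space into a metric space (or into a space with a countable separating family of continuous functionals) whose points of continuity one wants to identify is automatically continuous on a residual set, provided the map is of Baire class one, or more robustly, provided it is \emph{semicontinuous} with respect to enough test functions. Here the target $\cM(\cM(M))$ is metrizable and compact, so it suffices to show that for each continuous $\Psi : \cM(M) \to \R$ the real-valued map $f \mapsto \int_{\cM(M)} \Psi \, d\bkappa_f$ is the pointwise limit of a sequence of continuous functions of $f$ — i.e.\ is of Baire class one — since then each such map has a residual set of continuity points, and intersecting over a countable dense family of $\Psi$'s (in the sup norm on $\cM(M)$, which is separable) gives a residual set on which $f \mapsto \bkappa_f$ is continuous.

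The key computation is to rewrite $\int \Psi \, d\bkappa_f$ in a dynamically usable form. Using that $\bkappa_f$ is the push-forward of $m$ under $x \mapsto \beta_{f,x}$ (the Birkhoff limit measure from \eqref{e.statistics}), we have
$$
\int_{\cM(M)} \Psi \, d\bkappa_f = \int_M \Psi(\beta_{f,x}) \, dm(x).
$$
It is enough to treat $\Psi$ of the special form $\Psi(\nu) = \eta\!\left(\int \phi_1 \, d\nu, \ldots, \int \phi_k \, d\nu\right)$ for continuous $\phi_i : M \to \R$ and continuous $\eta : \R^k \to \R$, since such functions are dense in $C(\cM(M))$ by the Stone--Weierstrass theorem (the maps $\nu \mapsto \int \phi \, d\nu$ separate points of $\cM(M)$). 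For such $\Psi$,
$$
\int \Psi \, d\bkappa_f = \int_M \eta\!\left( \lim_{n\to\infty} \tfrac{1}{n}\sum_{j=0}^{n-1}\phi_1(f^j x), \ldots \right) dm(x).
$$
Now define, for each $n$, the continuous function
$$
f \mapsto I_n(f) := \int_M \eta\!\left( \tfrac{1}{n}\sum_{j=0}^{n-1}\phi_1(f^j x), \ldots, \tfrac{1}{n}\sum_{j=0}^{n-1}\phi_k(f^j x) \right) dm(x);
$$
continuity of $I_n$ in $f$ is clear since $f^j$ depends continuously on $f$ and everything in sight is continuous and bounded. By the Birkhoff ergodic theorem the integrand converges $m$-a.e.\ to the integrand of $\int \Psi \, d\bkappa_f$, and it is uniformly bounded (by $\sup|\eta|$ on the relevant compact box), so by dominated convergence $I_n(f) \to \int \Psi \, d\bkappa_f$ pointwise in $f$. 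Hence $f \mapsto \int \Psi \, d\bkappa_f$ is of Baire class one, and since the real line is a Baire space and $\Diff^r_m(M)$ is Baire, the set of its continuity points is residual.

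To finish, fix a countable family $\{\Psi_\ell\}$ of functions of the special form above that is dense in $C(\cM(M))$ (possible since $C(\cM(M))$ is separable, $\cM(M)$ being compact metrizable), and let $\cR = \bigcap_\ell \{f : f \text{ is a continuity point of } f \mapsto \int \Psi_\ell \, d\bkappa_f\}$, a residual set. For $f \in \cR$ and $f_n \to f$, one checks $\int \Psi_\ell \, d\bkappa_{f_n} \to \int \Psi_\ell \, d\bkappa_f$ for all $\ell$, hence (by density and uniform boundedness of $\|\Psi - \Psi_\ell\|_\infty$ together with the uniform bound $\left|\int \Psi \, d\nu\right| \le \|\Psi\|_\infty$ on any $\nu \in \cM(\cM(M))$) also $\int \Psi \, d\bkappa_{f_n} \to \int \Psi \, d\bkappa_f$ for every $\Psi \in C(\cM(M))$; this is precisely weak-$*$ convergence $\bkappa_{f_n} \to \bkappa_f$ in $\cM(\cM(M))$. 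I expect the main obstacle to be purely bookkeeping: justifying the a.e.\ convergence of the Birkhoff averages \emph{uniformly enough} to pass the limit inside $\int_M (\cdot) \, dm$ requires only the dominated convergence theorem and the fact that the set $R_f$ of \eqref{e.statistics} has full $m$-measure, so in fact there is no serious obstacle — the one subtlety worth stating carefully is that the $m$-null set where Birkhoff averages fail to converge depends on $f$, but this is harmless since we integrate against the \emph{fixed} measure $m$ and only need pointwise (in $f$) convergence of the integrals $I_n(f)$.
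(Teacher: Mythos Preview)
Your argument is correct, and it is genuinely different from the paper's proof. The paper does not use the Baire-class-one route; instead it observes that for each continuous $\phi$ with $\int\phi\,dm=0$ the quantity
\[
\Var(\phi,\bkappa_f)=\|\hat\phi_f\|_{L^2(m)}^2=\inf_n \frac{1}{n^2}\,\|\phi+\phi\circ f+\cdots+\phi\circ f^{n-1}\|_{L^2(m)}^2
\]
is upper semicontinuous in $f$ (being an infimum of continuous functions), and hence has a residual set of continuity points. The nontrivial step there is a variance-maximization lemma: among all decompositions $\blambda$ of $m$ into $f$-invariant measures, the ergodic decomposition $\bkappa_f$ is the unique one maximizing every $\Var(\phi,\blambda)$. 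This is what lets the paper pass from continuity of each $\Var(\phi,\bkappa_f)$ to continuity of $\bkappa_f$ itself.

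Your approach sidesteps that lemma entirely: you write $\int\Psi\,d\bkappa_f$ directly as a pointwise limit of the continuous functionals $I_n(f)$ via Birkhoff's theorem and dominated convergence, invoke the classical fact that Baire-class-one functions on a Baire space are continuous on a residual set, and finish with a density argument in $C(\cM(M))$. This is shorter and more elementary, and it makes transparent that no special structure of $m$ beyond $f$-invariance is used. What the paper's route buys in exchange is a sharper structural statement (upper semicontinuity rather than merely Baire-class-one) and the variance-maximization characterization of the ergodic decomposition, which has independent interest; but for the bare statement of the theorem your proof is complete as it stands.
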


To get a taste for this result, consider the circle case.
%If $f \in \Diff_m^r(S^1)$ 
%then 
%\begin{itemize}
%\item either $f$ is a irrational rotation and thus $\kappa_f = \delta_m$,
%\item or there is $n \in \Z_+$ such that $f^n = \id$ and thus $\kappa_f$
%is supported on the set  $\big \{ \frac{1}{n} \sum_{j=0}^{n-1} \delta_{f^j x} ; \; x\in S^1 \big\}$.
%\end{itemize}
Then the points of continuity of the ergodic decomposition are precisely
the irrational rotations and the orientation-reversing involutions.

%*****Another (easier) generic property: $\supp \bkappa_{f}$ is continuous in the Hausdorff topology. (mentioned in \cite{ABC}?)
%Any others?

\begin{proof}[Proof of Theorem~\ref{t.CED}]
Let $\phi:M \to \R$ is a continuous map with $\int \phi \, dm = 0$.
For each $f \in \Diff_m^r(M)$, let $\phi_{f,n} = \phi + \cdots + \phi \circ f^{n-1}$ and
$\hat\phi_{f} = \lim_{n \to \infty} \phi_{f,n} / n$.
The sequence $\|\phi_{f,n}\|_{L^2(m)}$ is subadditive,
and therefore 
$$
\| \hat\phi_{f} \|_{L^2(m)} = \inf_n \frac{\|\phi_{f,n}\|_{L^2(m)}}{n}   \, .
$$
On the other hand, $f \mapsto \phi_{f,n}$ is 
a continuous map from $\Diff_m^r(M)$ to $L^2(m)$.
In particular, we conclude that
\emph{the function $f \in \Diff_m^r(M) \mapsto \| \hat\phi_{f} \|_{L^2(m)}$
is upper-semicontinuous.}
In particular its points of continuity form a residual subset $\cR_\phi$ of 
$\Diff_m^r(M)$.

Let $L^2_0(m) = \big\{ \phi \in L^2(m) ; \; \int \phi \, dm = 0 \big\}$.	
Take a countable dense subset $\{\phi_j\}$ of $L^2_0(m)$ formed by continuous functions.
Define a residual subset $\cR = \bigcap_j \cR_{\phi_j}$.
To prove the theorem, we will show that each $f$ in $\cR$ is a point of continuity
of the ergodic decomposition.

To begin, notice that
$$
\| \hat\phi_{f} \|_{L^2(m)}^2 = \int_{\cM(M)} \left( \int \phi \, d\mu \right)^2 \, d\bkappa_{f}(\mu) \, .
$$
Let $\cD(m) \subset \cM(\cM(M))$ be the set of decompositions of Lebesgue measure~$m$.
We define the \emph{variance} of a function $\phi \in L^2_0(m)$ with respect to 
a decomposition $\blambda \in \cD(m)$ as 
$$
\Var(\phi,\blambda) = \int_{\cM(M)} \left( \int \phi \, d\mu \right)^2 \, d\blambda(\mu) \, .
$$
Thus $\Var(\phi,\bkappa_f) = \| \hat\phi_{f} \|_{L^2(m)}^2$.

\begin{lemma}\label{l.var cont}
$\Var(\phi,\blambda)$ is finite and depends continuously on $\phi$ and $\blambda$.
\end{lemma}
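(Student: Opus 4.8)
The plan is to treat $\Var(\phi,\blambda)$ as an integral of the function $\mu \mapsto \left(\int \phi\, d\mu\right)^2$ against $\blambda$ and exploit that this function, while unbounded, can be well approximated by bounded continuous functions in $L^1(\blambda)$ uniformly over all $\blambda \in \cD(m)$. First I would observe the key uniform integrability estimate: for any $\blambda \in \cD(m)$ one has $\int_{\cM(M)}\left(\int\phi\,d\mu\right)^2 d\blambda(\mu) \le \int_{\cM(M)}\int |\phi|^2\,d\mu\,d\blambda(\mu) = \int_M |\phi|^2\,dm = \|\phi\|_{L^2(m)}^2$, by Jensen's inequality and the defining property $\mu = \int\nu\,d\blambda(\nu)$ (applied to $|\phi|^2$, which is legitimate by the integrability remarks after the Proposition/Definition). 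In particular $\Var(\phi,\blambda)$ is finite. The same computation applied to $|\phi|^2\charac_{\{|\phi|>T\}}$ gives $\int_{\cM(M)}\int_{\{|\phi|>T\}}|\phi|^2\,d\mu\,d\blambda(\mu) = \int_{\{|\phi|>T\}}|\phi|^2\,dm$, which tends to $0$ as $T\to\infty$ independently of $\blambda$; this is the uniform integrability we need.

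Next I would handle continuity in $\phi$ for fixed $\blambda$: writing $F_\phi(\mu) = \int\phi\,d\mu$, we have $|F_\phi(\mu)^2 - F_\psi(\mu)^2| = |F_{\phi-\psi}(\mu)|\cdot|F_{\phi+\psi}(\mu)|$, and Cauchy--Schwarz in $L^2(\blambda)$ together with the estimate above gives $|\Var(\phi,\blambda) - \Var(\psi,\blambda)| \le \|\phi-\psi\|_{L^2(m)}\,(\|\phi\|_{L^2(m)}+\|\psi\|_{L^2(m)})$; so $\phi\mapsto\Var(\phi,\blambda)$ is locally Lipschitz, uniformly in $\blambda$. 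For joint continuity it therefore suffices, by a standard $\eps/3$ argument, to prove continuity in $\blambda$ for each fixed \emph{continuous} $\phi$. For continuous $\phi$ the function $\mu\mapsto F_\phi(\mu)$ is continuous on $\cM(M)$, but $F_\phi^2$ need not be bounded — wait, it is: $|F_\phi(\mu)|\le\|\phi\|_{C^0}$, so $F_\phi^2$ is a bounded continuous function on $\cM(M)$. Hence $\blambda\mapsto\int F_\phi^2\,d\blambda$ is continuous on $\cM(\cM(M))$ by the very definition of the weak-$*$ topology, and we are done.

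Let me reassemble this into the cleanest logical order. Step 1: the Jensen/Fubini bound $\Var(\phi,\blambda)\le\|\phi\|_{L^2(m)}^2$, giving finiteness. Step 2: the Lipschitz-type estimate in $\phi$, uniform in $\blambda$. Step 3: for continuous $\phi$, note $F_\phi^2$ is bounded continuous on $\cM(M)$, so $\blambda\mapsto\Var(\phi,\blambda)$ is weak-$*$ continuous. Step 4: approximate a general $\phi\in L^2_0(m)$ by continuous functions and combine Steps 2 and 3 via the triangle inequality to get joint continuity. The only mildly delicate point — really the main obstacle — is making sure the Fubini-type interchange $\int_{\cM(M)}\int|\phi|^2\,d\mu\,d\blambda = \int_M|\phi|^2\,dm$ is valid for the unbounded integrand $|\phi|^2\in L^1(m)$; this is precisely covered by the second bullet point following the Proposition/Definition (the formula \eqref{e.integration} extends to $\bar\mu$-integrable $\phi$), applied with $\bar\mu = m$, $\lambda = \blambda$, and the tautological family $\mu_\nu = \nu$. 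Everything else is routine.
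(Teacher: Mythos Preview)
Your proposal is correct and follows essentially the same route as the paper: the Jensen/Fubini bound $\Var(\phi,\blambda)\le\|\phi\|_{L^2(m)}^2$ for finiteness, a uniform-in-$\blambda$ continuity estimate in $\phi$, weak-$*$ continuity in $\blambda$ for continuous $\phi$ (since then $F_\phi^2$ is bounded continuous on $\cM(M)$), and an approximation to pass to general $\phi\in L^2_0(m)$. The only cosmetic difference is that the paper works with $\Var^{1/2}$ and the triangle inequality in $L^2(\blambda)$ to obtain the global $1$-Lipschitz bound $\big|\Var^{1/2}(\phi,\blambda)-\Var^{1/2}(\psi,\blambda)\big|\le\|\phi-\psi\|_{L^2(m)}$, whereas you factor $F_\phi^2-F_\psi^2$ and use Cauchy--Schwarz to get a locally Lipschitz bound on $\Var$ itself; either yields the needed uniform equicontinuity.
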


\begin{proof}
Given $\phi \in L^2_0(m)$, let $\Phi(\mu) = \int \phi \, d\mu$.
By convexity, $\Phi(\mu)^2 \le \int \phi^2 \, d\mu$. 
Integrating with respect to $\blambda$, we get
$$
\Var(\phi, \blambda) = \|\Phi\|_{L^2(\blambda)}^2 \le \int \phi^2 \, dm = \|\phi\|_{L^2(m)}^2 < \infty.
$$
Hence the triangle inequality in $L^2(\blambda)$ gives
$$
\Var^{1/2}(\phi + \psi, \blambda) \le \Var^{1/2}(\phi,\blambda) +\Var^{1/2}(\psi,\blambda) \quad
\text{for all $\phi$, $\psi \in L^2_0(m)$.}
$$
Therefore
$$
\big| \Var^{1/2}(\phi,\blambda) - \Var^{1/2}(\psi,\blambda) \big| \le \| \phi - \psi \|_{L^2(m)} \quad
\text{for all $\phi$, $\psi \in L^2_0(m)$.}
$$
and so the functions $\phi \in L^2(m) \mapsto \Var^{1/2}(\phi,\blambda)$ with $\blambda \in \cD(m)$
form a uniformly equicontinuous family.
Finally, the functions $\blambda \in \cD(m) \mapsto \Var(\phi,\blambda)$ are continuous:
this is obvious if $\phi$ is continuous and the general case follows by equicontinuity.
\end{proof}

Given $f \in \Diff_m^r(M)$, let $\cD(m,f)$ be the set of
decompositions of $m$ into (non-necessarily ergodic) $f$-invariant measures,
that is, the set of $\blambda \in \cD(m)$
such that $\blambda (\cM(f)) = 1$.

\begin{lemma}\label{l.var max}
Let $f\in \Diff_m^r(M)$.
Then for any $\phi \in L^2_0(m)$ and $\blambda \in \cD(m,f)$,
$$
\Var(\phi,\blambda) \le \Var(\phi,\bkappa_{f}) \, .
$$
Moreover, if $\blambda \in \cD(m,f)$ is such that the equality holds for every $\phi\in L^2_0(m)$
then  $\blambda = \bkappa_{f}$.
\end{lemma}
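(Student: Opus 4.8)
The plan is to exploit the fact, established in the proof of Theorem~\ref{t.CED}, that for a continuous $\phi$ with $\int\phi\,dm=0$ one has $\Var(\phi,\bkappa_f)=\|\hat\phi_f\|_{L^2(m)}^2$, and that $\hat\phi_f$ is the $L^2$-limit of the Birkhoff averages $\phi_{f,n}/n$. The key point is that the ergodic decomposition is the finest $f$-invariant decomposition, so integrating the conditional functions $\int\phi\,d\mu$ against it recaptures the full ``asymptotic variance''. First I would prove the inequality. Fix $\phi\in L^2_0(m)$ and $\blambda\in\cD(m,f)$, and for $\blambda$-a.e.\ $\mu$ set $\Phi(\mu)=\int\phi\,d\mu$. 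Each such $\mu$ is $f$-invariant, so its own ergodic decomposition $\bkappa_{f,\mu}$ is defined, and by uniqueness of the ergodic decomposition of $m$ one has $\bkappa_f=\int\bkappa_{f,\mu}\,d\blambda(\mu)$ (this is exactly the statement that refining each $\mu$ into its ergodic pieces yields the ergodic decomposition of $m$; it follows from the explicit description via $\beta_{f,x}$ in \eqref{e.statistics}, since the same full-measure set $R_f$ works for $m$ and for every $\mu$). Now $\Var(\phi,\bkappa_{f,\mu})=\|\hat\phi_f\|_{L^2(\mu)}^2$ is the $L^2(\mu)$-norm of the conditional expectation of $\phi$ onto the $\sigma$-algebra of $f$-invariant sets, hence by Jensen/orthogonal-projection $\Var(\phi,\bkappa_{f,\mu})\ge\big(\int\hat\phi_f\,d\mu\big)^2=\big(\int\phi\,d\mu\big)^2=\Phi(\mu)^2$, using $f$-invariance of $\mu$ for the middle equality. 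Integrating this over $\blambda$ and using the push-forward property of the integral gives
$$
\Var(\phi,\bkappa_f)=\int\Var(\phi,\bkappa_{f,\mu})\,d\blambda(\mu)\ \ge\ \int\Phi(\mu)^2\,d\blambda(\mu)=\Var(\phi,\blambda),
$$
which is the desired inequality.

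For the rigidity statement, suppose $\blambda\in\cD(m,f)$ achieves equality for every $\phi\in L^2_0(m)$. Tracing through the above, equality in Jensen for $\blambda$-a.e.\ $\mu$ forces $\hat\phi_f$ to be $\mu$-a.e.\ constant, i.e.\ $\mu$ is ``$\phi$-ergodic'', for every $\phi$ in the countable dense family $\{\phi_j\}\subset L^2_0(m)$ simultaneously (intersect the full-measure sets). A standard argument then shows $\blambda$-a.e.\ $\mu$ is genuinely $f$-ergodic: if $A$ is an $f$-invariant Borel set with $0<\mu(A)<1$, approximate $\charac_A-\mu(A)$ in $L^2(m)$-sense along the relevant $\mu$ by some $\phi_j$; more cleanly, since $\hat\phi_{f,\mu}$ is constant for all $j$ and the $\phi_j$ are dense, $L^2(\mu)$-conditional expectation onto the invariant $\sigma$-algebra is the zero-variance (i.e.\ constant) operator on a dense subspace of $L^2_0(m)$, hence on all of it, so the invariant $\sigma$-algebra is $\mu$-trivial. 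Thus $\blambda(\cM_{\mathrm{erg}}(f))=1$, so $\blambda$ is a decomposition of $m$ into ergodic measures, and by the uniqueness clause in the definition of $\bkappa_f$ (the ergodic decomposition is the \emph{unique} such) we conclude $\blambda=\bkappa_f$.

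The main obstacle I expect is not the inequality itself but making the two ``refinement'' facts fully rigorous in the measure-valued setting: (i) that $\bkappa_f=\int\bkappa_{f,\mu}\,d\blambda(\mu)$, and (ii) that $\hat\phi_f$ (defined globally via $m$) restricts correctly to $\hat\phi_{f,\mu}$ for $\blambda$-a.e.\ $\mu$, so that $\Var(\phi,\bkappa_{f,\mu})=\|\hat\phi_f\|_{L^2(\mu)}^2$. Both hinge on Ma\~n\'e's pointwise description: the single Borel set $R_f$ of \eqref{e.statistics} has full measure for \emph{every} $f$-invariant probability, and $x\mapsto\beta_{f,x}$ is a common disintegration map; this lets one write all the relevant objects in terms of one $m$-a.e.\ defined function and invoke the push-forward compatibility of measure-valued integration recorded after the Proposition/Definition. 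Once that bookkeeping is in place, the Hilbert-space (conditional-expectation) computations are routine and the density argument for the equality case is standard.
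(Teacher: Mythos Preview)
Your proof of the inequality is correct and coincides with the paper's: both hinge on the refinement identity $\bkappa_f=\int\bkappa_{f,\mu}\,d\blambda(\mu)$ followed by Jensen applied to $\nu\mapsto(\int\phi\,d\nu)^2$. The paper applies Jensen directly, without passing through the identity $\Var(\phi,\bkappa_{f,\mu})=\|\hat\phi_f\|_{L^2(\mu)}^2$; your detour via $\hat\phi_f$ is harmless but unnecessary, and it is what creates your bookkeeping obstacle~(ii), which the paper's argument never needs.

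In the rigidity part there is a small but genuine gap. Your sentence ``the $L^2(\mu)$-conditional expectation onto the invariant $\sigma$-algebra is constant on a dense subspace of $L^2_0(m)$, hence on all of it'' conflates two topologies: the $\phi_j$ are dense in $L^2_0(m)$, but the conditional-expectation operator you want to extend is continuous only on $L^2(\mu)$, and there is no reason for the $\phi_j$ to be dense there (nor for a general $\phi\in L^2_0(m)$ even to lie in $L^2(\mu)$, since $\mu$ need not be absolutely continuous with respect to $m$). The paper sidesteps this by working at the level of measures rather than function spaces: from equality in Jensen one obtains, for $\blambda$-a.e.\ $\mu$ and $\bkappa_{f,\mu}$-a.e.\ $\nu$, that $\int\phi_j\,d\nu=\int\phi_j\,d\mu$ for every $j$; taking the continuous $\phi_j$ to separate Borel probabilities, this forces $\nu=\mu$, i.e.\ $\bkappa_{f,\mu}=\delta_\mu$, hence $\mu$ is ergodic. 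That is the same conclusion you are reaching for, phrased so that only weak-star information about $\nu$ is used and the $L^2(m)$ versus $L^2(\mu)$ mismatch never arises.
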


\begin{proof}
Fix $f$ and $\blambda \in \cD(m,f)$.
The measure $\int \bkappa_{f,\mu} \, d\blambda (\mu)$
(which makes sense by Lemma~\ref{l.measurable decomposition})
is a decomposition of $m$
and gives full weight to $\cM_\mathrm{erg}(f)$;
therefore it equals~$\bkappa_{f}$.
So for any $\phi\in L^2_0(m)$ we have
\begin{alignat*}{2}
\Var(\phi,\blambda)
&=  \int \left( \int \phi \, d\mu \right)^2 \, d\blambda(\mu)                           \\
&=  \int \left( \iint \phi \, d\nu  \, d\bkappa_{f,\mu}(\nu) \right)^2 \, d\blambda(\mu)  \\
&\le\iint \left( \int \phi \, d\nu \right)^2  d\bkappa_{f,\mu}(\nu) \, d\blambda(\mu)    &\quad&\text{(by convexity)}\\
&=  \int \left( \int \phi \, d\nu \right)^2  d\bkappa_{f}(\nu) &\quad&\text{(since $\bkappa_{f} = \int \bkappa_{f,\mu} \, d\blambda (\mu)$\,)}\\
&= \Var(\phi,\bkappa_{f}) \, .
\end{alignat*}
This proves the first part of the lemma.
If equality holds above then
for $\blambda$-almost every $\mu$,
the function $\nu \mapsto \int \phi \, d\nu$
is constant $\kappa_{f,\mu}$-almost everywhere.
By averaging, that constant must be $\int \phi \, d\mu$.
Now assume that this happens say for every $\phi$ in a countable dense subset $D$ of $L_0^2(m)$.
Then for $\blambda$-almost every $\mu$ and $\kappa_{f,\mu}$-almost every $\nu$,
we have
$\int \phi \, d\nu = \int \phi \, d\mu$ for all $\phi$ in $D$.
Hence for $\blambda$-almost every $\mu$, the measure $\bkappa_{f,\mu}$ is the Dirac
mass concentrated on $\mu$,
and in particular $\mu$ is ergodic.
Since $\bkappa_{f}$ is the only decomposition of $m$ giving full weight to $\cM_\mathrm{erg}(f)$,
we have $\blambda=\kappa_f$.
\end{proof}

We now complete the proof of Theorem~\ref{t.CED}.
Fix $f \in \cR$.
Take any sequence $f_n \to f$.
such that $\bkappa_{f_n}$ has a limit $\blambda$.
Recall that $\{\phi_j\}$ is a dense subset of $L_0^2(m)$.
For each $j$ we have 
\begin{alignat*}{2}
\Var(\phi_j, \blambda) 
&= \lim_{n \to \infty} \Var(\phi_j , \bkappa_{f_n})  
&\quad&\text{(by Lemma~\ref{l.var cont})}\\
&= \Var(\phi_j, \bkappa_{f}) 
&\quad&\text{(since $f$ is a point of continuity of $g \mapsto \Var(\phi_j , \bkappa_{g})$).}
\end{alignat*}
By Lemma~\ref{l.var cont} again it follows that $\Var(\phi, \blambda) = \Var(\phi, \bkappa_{f})$
for every $\phi \in L^2_0(m)$.
By Lemma~\ref{l.var max}, we have $\blambda = \bkappa_{f}$.
\end{proof}

\subsection{More on the Ergodic Decomposition} \label{ss.more}

We may informally interpret Theorem~\ref{t.CED} as follows:
Given a $m$-preserving diffeomorphism $f$, consider 
the proportion (with respect to $m$) of points in the manifold 
whose $f$-orbits have approximately a certain prescribed statistics;
then for generic $f$ this proportion does not change much if $f$ is perturbed.
Let us improve this a little and show that if $f$ is perturbed
then the \emph{set} of points whose orbits have approximately a certain prescribed statistics
does not change much.

If $\cU \subset \cM(M)$ is a Borel set,
let $X_{\cU,f}$ be
the set of all $x \in M$ such that 
$\frac {1} {n} \sum_{j=0}^{n-1} \delta_{f^j(x)}$ converges and belongs to $\cU$.
That is, $X_{\cU,f} = \beta_f^{-1}(\cU)$, where $\beta_f$ is given by \eqref{e.statistics}.
Then $X_{\cU,f}$ is an $f$-invariant Borel set, and
$\mu(X_{\cU,f}) = \bkappa_{f, \mu}(\cU)$ for any $\mu \in \cM(f)$.

\begin{lemma}\label{l.sunday}
If $f_k \to f$ and $\bkappa_{f_k} \to \bkappa_{f}$ then
for any open set $\cU \subset \cM(M)$ with $\bkappa_{f}(\partial \cU) = 0$ we have
$$
\lim_{k \to \infty}
m \big( X_{\cU,f_k} \vartriangle X_{\cU,f} \big) = 0 \, .
$$
\end{lemma}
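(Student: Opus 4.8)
The plan is to reduce the statement to an $L^{2}$ continuity property of the ``ergodic-average maps'' $x\mapsto\beta_{g,x}$ and then transfer it to the symmetric difference by soft measure theory. For a continuous $\phi\colon M\to\R$ with $\int\phi\,dm=0$ write $\hat\phi_{g}(x)=\int\phi\,d\beta_{g,x}$; by the Birkhoff and von Neumann ergodic theorems this coincides $m$-a.e.\ with $E_{m}[\phi\mid\cI_{g}]$, the conditional expectation onto the $\sigma$-algebra $\cI_{g}$ of $g$-invariant Borel sets, and (as observed in the proof of Theorem~\ref{t.CED}) $\|\hat\phi_{g}\|_{L^{2}(m)}^{2}=\Var(\phi,\bkappa_{g})$. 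The core of the proof will be the following claim, whose proof I postpone: \emph{for every continuous $\phi$ with $\int\phi\,dm=0$, one has $\hat\phi_{f_{k}}\to\hat\phi_{f}$ in $L^{2}(m)$.}

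Granting the claim, I would conclude as follows. Fix a metric $d$ on $\cM(M)$ compatible with the weak-$*$ topology, of the form $d(\mu,\nu)=\sum_{i\ge1}2^{-i}|\int\phi_{i}\,d\mu-\int\phi_{i}\,d\nu|$ with $\phi_{i}$ continuous, $\|\phi_{i}\|_{\infty}\le1$, $\int\phi_{i}\,dm=0$. The claim applied to each $\phi_{i}$, together with a routine truncation of the series, shows that $x\mapsto\beta_{f_{k},x}$ converges to $x\mapsto\beta_{f,x}$ in $m$-measure, i.e.\ $m\{x:d(\beta_{f_{k},x},\beta_{f,x})\ge\eta\}\to0$ for every $\eta>0$. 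Now, up to $m$-null sets, $X_{\cU,f}\setminus X_{\cU,f_{k}}\subset\{x:d(\beta_{f_{k},x},\beta_{f,x})\ge\eta\}\cup\beta_{f}^{-1}(A_{\eta})$, where $A_{\eta}=\{\mu\in\cU:\mathrm{dist}(\mu,\cM(M)\setminus\cU)<\eta\}$: indeed, if $\beta_{f,x}\in\cU$ at distance $\ge\eta$ from the complement and $\beta_{f_{k},x}$ lies within $\eta$ of $\beta_{f,x}$, then $\beta_{f_{k},x}\in\cU$. Symmetrically, $X_{\cU,f_{k}}\setminus X_{\cU,f}\subset\{x:d(\beta_{f_{k},x},\beta_{f,x})\ge\eta\}\cup\beta_{f}^{-1}(B_{\eta})$ with $B_{\eta}=\{\mu\notin\cU:\mathrm{dist}(\mu,\overline{\cU})<\eta\}$. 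Since $\cU$ is open, $A_{\eta}\downarrow\varnothing$ as $\eta\downarrow0$; and $B_{\eta}\downarrow\overline{\cU}\setminus\cU=\partial\cU$. As $\bkappa_{f}$ is finite and $\bkappa_{f}(\partial\cU)=0$, it follows that $\bkappa_{f}(A_{\eta})+\bkappa_{f}(B_{\eta})\to0$, i.e.\ $m(\beta_{f}^{-1}A_{\eta})+m(\beta_{f}^{-1}B_{\eta})\to0$. Choosing $\eta$ small and then letting $k\to\infty$ makes $\limsup_{k}m(X_{\cU,f_{k}}\vartriangle X_{\cU,f})$ arbitrarily small, as desired.

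It remains to prove the claim. Since $\|\hat\phi_{g}\|_{L^{2}}^{2}=\Var(\phi,\bkappa_{g})$ and $\blambda\mapsto\Var(\phi,\blambda)$ is continuous (Lemma~\ref{l.var cont}), the hypothesis $\bkappa_{f_{k}}\to\bkappa_{f}$ already gives $\|\hat\phi_{f_{k}}\|_{L^{2}}\to\|\hat\phi_{f}\|_{L^{2}}$; expanding $\|\hat\phi_{f_{k}}-\hat\phi_{f}\|^{2}$, it suffices to show the cross term converges, $\langle\hat\phi_{f_{k}},\hat\phi_{f}\rangle_{L^{2}(m)}\to\|\hat\phi_{f}\|_{L^{2}}^{2}$. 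For this I use two facts. First, writing $P_{k}=E_{m}[\,\cdot\mid\cI_{f_{k}}]$, which is a self-adjoint idempotent on $L^{2}(m)$, we have $\hat\phi_{f_{k}}=P_{k}\phi$, so $\langle\hat\phi_{f_{k}},\psi\rangle=\langle\phi,P_{k}\psi\rangle$ for every $\psi\in L^{2}(m)$; taking $\psi=\hat\phi_{f}$, which is $\cI_{f}$-measurable (so that $\langle\phi,\hat\phi_{f}\rangle=\|\hat\phi_{f}\|^{2}$), the cross-term convergence reduces to $P_{k}\hat\phi_{f}\to\hat\phi_{f}$ in $L^{2}$. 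Second, I claim more generally that \emph{if $\psi\in L^{2}(m)$ is $f$-invariant and $\bkappa_{f_{k}}\to\bkappa_{f}$, then $P_{k}\psi\to\psi$ in $L^{2}$}: setting $c=\int\psi\,dm$, orthogonality of the projection $P_{k}$ gives $\|P_{k}\psi-\psi\|^{2}=\|\psi\|^{2}-\|P_{k}\psi\|^{2}$, while $P_{k}\psi-c=P_{k}(\psi-c)$ has zero mean, so $\|P_{k}\psi\|^{2}=c^{2}+\|P_{k}(\psi-c)\|^{2}=c^{2}+\Var(\psi-c,\bkappa_{f_{k}})$; by Lemma~\ref{l.var cont} this converges to $c^{2}+\Var(\psi-c,\bkappa_{f})=c^{2}+\|\psi-c\|^{2}=\|\psi\|^{2}$, the equality $\Var(\psi-c,\bkappa_{f})=\|\psi-c\|^{2}$ holding because $\psi-c$ is $f$-invariant, hence $E_{m}[\psi-c\mid\cI_{f}]=\psi-c$. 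Thus $\|P_{k}\psi-\psi\|^{2}\to0$; applying this with $\psi=\hat\phi_{f}$ proves the claim.

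I expect the main obstacle to be exactly this claim, together with the realization that it cannot be proved pointwise in $x$: the map $g\mapsto\beta_{g,x}$ (equivalently, the finite Birkhoff average $\tfrac1n\sum_{j<n}\phi\circ g^{j}$ for large $n$) is wildly discontinuous in $g$, so one cannot hope to compare $\beta_{f_{k},x}$ with $\beta_{f,x}$ for an individual $x$; the convergence has to be extracted ``in bulk'' from the $L^{2}$ norm control provided by the variance, via the two conditional-expectation identities above. Note that the argument uses $\bkappa_{f_{k}}\to\bkappa_{f}$ only through Lemma~\ref{l.var cont}, and uses the openness of $\cU$ and the hypothesis $\bkappa_{f}(\partial\cU)=0$ only in the final boundary estimate; the hypothesis $f_{k}\to f$ does not seem to be needed for the argument itself (it is presumably stated because the lemma is applied near a continuity point of the ergodic decomposition, where $f_{k}\to f$ forces $\bkappa_{f_{k}}\to\bkappa_{f}$).
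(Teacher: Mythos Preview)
Your proof is correct and takes a genuinely different route from the paper's. The paper argues by contradiction: assuming $m(X_{\cU,f}\setminus X_{\cU,f_k})>c>0$ along a subsequence, it builds $f_k$-invariant measures $\mu_k$ by Ces\`aro-averaging $m|Y_k$, passes to a weak limit $\mu$ (here the hypothesis $f_k\to f$ is used to ensure $\mu$ is $f$-invariant), and then shows that $\bkappa_{f_k,\mu_k}\to\bkappa_{f,\mu}$ via an absolute-continuity argument, reaching a contradiction since $\bkappa_{f_k,\mu_k}(\cU)=0$ while $\mu(X_{\cU,f})\ge c$.

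Your argument is more direct and in fact stronger: the $L^2$ identity $\|P_k\psi-\psi\|^2=\|\psi\|^2-\Var(\psi-\int\psi,\bkappa_{f_k})$ for $f$-invariant $\psi$, combined with Lemma~\ref{l.var cont}, yields $\hat\phi_{f_k}\to\hat\phi_f$ in $L^2$ for every continuous zero-mean $\phi$, hence convergence in $m$-measure of the maps $x\mapsto\beta_{f_k,x}$. This intermediate statement is essentially the content of the Corollary stated right after the lemma (continuity of $f\mapsto\bnu_f$), so you are proving both results at once. Two further gains of your approach: it makes transparent that only the hypothesis $\bkappa_{f_k}\to\bkappa_f$ is used (not $f_k\to f$), and it isolates cleanly where openness of $\cU$ and $\bkappa_f(\partial\cU)=0$ enter (only in the final boundary estimate). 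The paper's argument, on the other hand, avoids invoking Lemma~\ref{l.var cont} for non-continuous test functions and stays closer to soft weak-$*$ compactness reasoning; but it genuinely needs $f_k\to f$.
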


This lemma gives a precise meaning to the informal discussion above.

\begin{proof}
The hypotheses imply that $m(X_{\cU,f_k}) \to m(X_{\cU,f})$. 
We may thus assume that there exists $c>0$ such that
$Y_k=X_{\cU,f} \setminus X_{\cU,f_k}$ satisfies $m(Y_k)>c$ for all $k$.
Let %$\mu_k$ be the $f_k$-Ces\`{a}ro average of $m|Y_k$,
$$
\mu_k = \frac{1}{m(Y_k)} \lim_{n\to \infty} \frac{1}{n} \sum_{j=0}^{n-1} (f_k)^j_*(m|Y_k),
$$
that is, $\mu_k$ is the probability measure that is absolutely continuous with respect to $m$
and has density
$$
\frac{1}{m(Y_k)} \lim_{n\to \infty} \frac{1}{n} \sum_{j=0}^{n-1} \charac_{Y_k} \circ f_k^j \, .
$$
Then $\mu_k$ is $f_k$-invariant.
We claim %% compare Khinchin/Khintchine Theorem
that
$$
\mu_k(Y_k) \ge c \, .
$$
Indeed, let $P: L^2(m) \to L^2(m)$ be the orthogonal projection onto the space of $f_k$-invariant functions,
and $\phi = \charac_{Y_k}$;
then, by Von Neumann's Ergodic Theorem (and using that $1$ is in the image of $P$), we have
$$
m(Y_k) \mu_k(Y_k) = \langle P \phi, \phi \rangle =  \langle P \phi, P \phi \rangle \ge  \langle P \phi, 1 \rangle^2 
= \langle \phi, 1 \rangle^2 = m(Y_k)^2 \, ,
$$
so $\mu_k(Y_k) \ge m(Y_k) > c$.

By passing to a subsequence, we assume that $\mu_k$ has a limit $\mu$, which is 
evidently $f$-invariant.
Since each $\mu_k$ is absolutely continuous with density bounded by $c^{-1}$, 
the same is true for~$\mu$.
It also follows from the uniform bounds on densities that 
$\mu(X_{\cU,f}) = \lim_{k \to \infty} \mu_k(X_{\cU,f}) \ge c$.

The definition of $Y_k$ implies that 
$\bkappa_{f_k, \mu_k}$ gives no weight to $\cU$.  
We claim that 
$\bkappa_{f_k, \mu_k} \to \bkappa_{f,\mu}$.
Because $\cU$ is open, this implies
$$
\bkappa_{f,\mu} (\cU) \le \liminf \bkappa_{f_k,\mu_k}(\cU) = 0,
$$ 
which contradicts $\mu(X_{\cU,f}) \geq c$.

To see the claim, notice that $\bkappa_{f_k, \mu_k}$ is absolutely continuous with
respect to $\bkappa_{f_k}$ with density at most $c^{-1}$;
indeed, for any Borel set $\cB \subset \cM(\cM(M))$, we have
$$
\bkappa_{f_k, \mu_k} (\cB) = \mu_k(X_{\cB, f_k}) \le c^{-1} m(X_{\cB, f_k}) = c^{-1} \bkappa_{f_k} (\cB).
$$
We may assume that $\bkappa_{f_k, \mu_k}$ has a limit $\blambda$.
Then $\blambda$ is absolutely continuous with respect to $\bkappa_{f}$;
indeed, for for any continuous $\Phi \ge 0$, we have
$$
\int \Phi \, d\blambda = \lim \int \Phi \, d\bkappa_{f_k, \mu_k} \le c^{-1} \lim \int \Phi \, d\bkappa_{f_k} 
= c^{-1} \int \Phi \, d\bkappa_{f} \, .
$$
In particular, $\blambda (\cM(M) \setminus \cM_\mathrm{erg}(f)) = 0$.
Moreover,
$$
\int \nu \, d\blambda(\nu) = \lim \int \nu \, d\bkappa_{f_k, \mu_k}(\nu) = \lim \mu_k = \mu \, . 
$$
So $\blambda$ is the $f$-ergodic decomposition of $\mu$.
This proves the claim and hence the lemma.
\end{proof}

We will show an interesting consequence of Lemma~\ref{l.sunday} (and Theorem~\ref{t.CED}),
although we won't use it directly. %\margem{Could this simplify anything?}

For $f\in \Diff_m^r(M)$, we look again at the map 
$\beta_f : R_f \subset M \to \cM(M)$ defined by \eqref{e.statistics}.
Let $\bnu_f$ be the measure %on $M \times \cM(M)$
concentrated on the graph of $\beta_f$ that projects on $m$,
that is, the push-forward of $m$ by the map $(\id, \beta_f)$. 

\begin{corol}
The points of continuity of the map $f \in \Diff_m^r(M) \mapsto \bnu_f \in \cM(M \times \cM(M))$
are the same as for the map $f \mapsto \bkappa_f \in \cM(\cM(M))$,
and in particular form a residual set.
\end{corol}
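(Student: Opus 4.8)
The plan is to show directly that the two maps share the same points of continuity, one inclusion at a time. The inclusion ``continuity point of $f\mapsto\bnu_f$'' $\Rightarrow$ ``continuity point of $f\mapsto\bkappa_f$'' is immediate: if $\pi:M\times\cM(M)\to\cM(M)$ denotes the projection onto the second coordinate, then, since $\bnu_f$ is the push-forward of $m$ under $(\id,\beta_f)$ and $\pi\circ(\id,\beta_f)=\beta_f$, we get $\pi_*\bnu_f=(\beta_f)_*m=\bkappa_f$ (the latter equality being the very construction of the ergodic decomposition via $\beta_f$). Push-forward under a fixed continuous map is continuous for the weak-$*$ topologies, so $f\mapsto\bkappa_f=\pi_*\bnu_f$ is continuous at every point where $f\mapsto\bnu_f$ is.

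For the reverse inclusion I would fix a point of continuity $f$ of $g\mapsto\bkappa_g$, take an arbitrary sequence $f_k\to f$ (so that automatically $\bkappa_{f_k}\to\bkappa_f$, which places us in the setting of Lemma~\ref{l.sunday}), and prove that $\bnu_{f_k}\to\bnu_f$. The heart of the matter is the claim
\[
b\circ\beta_{f_k}\longrightarrow b\circ\beta_f \quad\text{in } L^1(m),\qquad \text{for every } b\in C(\cM(M)).
\]
I would first verify it when $b=\charac_\cU$ for an open set $\cU\subset\cM(M)$ with $\bkappa_f(\partial\cU)=0$: there $b\circ\beta_{f_k}=\charac_{X_{\cU,f_k}}$ and $b\circ\beta_f=\charac_{X_{\cU,f}}$, so the $L^1(m)$-distance between them is exactly $m(X_{\cU,f_k}\vartriangle X_{\cU,f})$, which tends to $0$ by Lemma~\ref{l.sunday}. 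For a general $b$, I would approximate it uniformly, within any prescribed $\eps>0$, by a finite combination $c_0+\sum_i c_i\charac_{\{b>t_i\}}$ whose thresholds all satisfy $\bkappa_f(\{b=t_i\})=0$; this is possible because the level sets $\{b=t\}$ are pairwise disjoint, so only countably many are $\bkappa_f$-charged, and $\partial\{b>t\}\subseteq\{b=t\}$. Combining the special case with the triangle inequality then yields $\limsup_k\|b\circ\beta_{f_k}-b\circ\beta_f\|_{L^1(m)}\le 2\eps$ for every $\eps$, hence the claim.

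Granting the claim, the remainder is soft weak-$*$ bookkeeping. For $a\in C(M)$ and $b\in C(\cM(M))$,
\[
\int a(x)\,b(\mu)\,d\bnu_{f_k}(x,\mu)=\int_M a\cdot(b\circ\beta_{f_k})\,dm \;\longrightarrow\; \int_M a\cdot(b\circ\beta_f)\,dm=\int a(x)\,b(\mu)\,d\bnu_f(x,\mu),
\]
using $a\in L^\infty(m)$ together with the claim. The functions $(x,\mu)\mapsto a(x)b(\mu)$ span a subalgebra of $C(M\times\cM(M))$ that contains the constants and separates points, hence is uniformly dense by Stone--Weierstrass; since moreover $\bnu_{f_k}(M\times\cM(M))=1$ for all $k$, the convergence of integrals extends to every continuous test function, i.e.\ $\bnu_{f_k}\to\bnu_f$. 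As $f_k\to f$ was arbitrary, $f$ is a point of continuity of $g\mapsto\bnu_g$. The single genuinely non-formal step is the $L^1(m)$-convergence $b\circ\beta_{f_k}\to b\circ\beta_f$, which I expect to be the main obstacle; everything else is either a direct appeal to Lemma~\ref{l.sunday} or routine measure theory. Finally, since Theorem~\ref{t.CED} provides a residual set of continuity points for $g\mapsto\bkappa_g$, the same set consists of continuity points of $g\mapsto\bnu_g$, proving the last assertion.
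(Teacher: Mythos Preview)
Your argument is correct. The paper actually omits the proof of this corollary, so there is no original proof to compare against; but your route is the natural one given the placement of the corollary immediately after Lemma~\ref{l.sunday}, and presumably what the authors had in mind. The only step worth a second look is the uniform approximation of $b\in C(\cM(M))$ by combinations $c_0+\sum_i c_i\charac_{\{b>t_i\}}$ with $\bkappa_f(\{b=t_i\})=0$: this is fine, since $b$ is bounded and continuous on a compact space, the superlevel sets $\{b>t\}$ are open with $\partial\{b>t\}\subset\{b=t\}$, and only countably many level sets can carry $\bkappa_f$-mass. Everything else (the push-forward identity $\bkappa_f=\pi_*\bnu_f$, the $L^1$ convergence from Lemma~\ref{l.sunday}, and the Stone--Weierstrass step on the compact metrizable space $M\times\cM(M)$) is routine and correctly handled.
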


We omit the proof.

\subsection{Generic Persistence of Invariant Sets}

The next result says that if $f$ is a generic volume-preserving diffeomorphism,
then its measurable invariant sets persist in a certain (measure-theoretic and topological) sense
under perturbations of $f$.

If $\eta>0$ and $\Lambda \subset M$ is any set, let
$B_\eta(\Lambda)$ denote the $\eta$-neighborhood of~$\Lambda$, that is,
the set of $y \in M$ such that $d(y,x) < \eta$ for some $x \in \Lambda$.

\begin{thm}\label{t.PIS} 
Fix an integer $r\ge 0$. 
There is a residual set 
$\cR \subset \Diff_m^r(M)$ such that
for every $f\in \cR$, 
every $f$-invariant Borel set $\Lambda \subset M$, 
and every $\eta>0$,
if $g \in \Diff_m^r(M)$ is sufficiently close to $f$
then there exists a $g$-invariant Borel set $\tilde \Lambda$ such that
$$
\tilde\Lambda \subset B_\eta(\Lambda) \quad \text{and} \quad
m(\tilde \Lambda \vartriangle \Lambda) < \eta .
$$
\end{thm}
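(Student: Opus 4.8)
The plan is to recast everything through the ergodic decomposition, where Theorem~\ref{t.CED} and Lemma~\ref{l.sunday} are available. \emph{First (a dictionary between invariant sets and sets of ergodic measures).} For $m$-a.e.\ $x$ the empirical measure $\beta_{f,x}$ of \eqref{e.statistics} is a well-defined ergodic measure; since $\Lambda$ is $f$-invariant, $\charac_\Lambda$ is an $f$-invariant function, so Birkhoff's theorem gives $\beta_{f,x}(\Lambda)=\charac_\Lambda(x)$ for $m$-a.e.\ $x$, and moreover $\supp\beta_{f,x}\subseteq\cl(\cO(x))\subseteq\cl\Lambda$ for $m$-a.e.\ $x\in\Lambda$; in fact $\cO(x)\subseteq\supp\beta_{f,x}$ for $m$-a.e.\ $x$ (by Poincar\'e recurrence together with the $\beta_{f,x}$-a.e.\ constancy of $y\mapsto\beta_{f,y}$). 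Hence, with $\cU_\Lambda:=\{\nu\in\cM_{\mathrm{erg}}(f):\nu(\Lambda)=1\}$, we have $\Lambda=X_{\cU_\Lambda,f}$ modulo $m$, and $\bkappa_f|_{\cU_\Lambda}=(\beta_f)_*(m|_\Lambda)$ is carried by the closed set $\{\nu:\supp\nu\subseteq\cl\Lambda\}$. (We may assume $m(\Lambda)>0$, else $\tilde\Lambda=\varnothing$ works.)

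\emph{Second (a good open approximation and the measure estimate).} Fix $\eta>0$, then $\epsilon>0$ (to be chosen), and by Fubini a radius $\rho\in(\eta/2,\eta)$ with $\beta_{f,x}(\partial B_\rho(\Lambda))=0$ for $m$-a.e.\ $x$. Using that $\nu\mapsto\nu(B_\rho(\Lambda))$ is lower semicontinuous and that $\bkappa_f$ is regular, I would choose an open set $\cU\subseteq\cM(M)$ such that: (i) $m(X_{\cU,f}\vartriangle\Lambda)<\eta/8$; (ii) $\bkappa_f(\partial\cU)=0$ (arranged, e.g., by taking $\cU$ a sublevel set $\{G<a\}$ of a fixed continuous $G$, for Lebesgue-a.e.\ $a$); (iii) $\mu(B_\rho(\Lambda))>1-\epsilon$ for every $\mu\in\cl\cU$. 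By Theorem~\ref{t.CED} we have $\bkappa_g\to\bkappa_f$ as $g\to f$ in $\Diff_m^r(M)$, so Lemma~\ref{l.sunday} provides a $C^r$-neighborhood of $f$ on which $m(X_{\cU,g}\vartriangle X_{\cU,f})<\eta/8$, hence $m(X_{\cU,g}\vartriangle\Lambda)<\eta/4$.

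\emph{Third (the candidate and the reduction).} For $g$ in that neighborhood I would set
\[
\tilde\Lambda \;:=\; X_{\cU,g}\;\cap\;\bigcap_{n\in\Z}g^{-n}\bigl(B_\eta(\Lambda)\bigr),
\]
correcting it on a $g$-invariant $m$-null set so that it becomes a genuine Borel set. Since every point whose whole orbit stays in $B_\eta(\Lambda)$ lies in $B_\eta(\Lambda)$, this $\tilde\Lambda$ is $g$-invariant and contained in $B_\eta(\Lambda)$. As $\tilde\Lambda\subseteq X_{\cU,g}$, we get $m(\tilde\Lambda\setminus\Lambda)\le m(X_{\cU,g}\setminus\Lambda)<\eta/4$; and $m(\Lambda\setminus\tilde\Lambda)\le m(\Lambda\setminus X_{\cU,g})+m\bigl(\Lambda\setminus\bigcap_n g^{-n}(B_\eta(\Lambda))\bigr)$. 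So the whole theorem reduces to showing that, for $g$ close enough to $f$,
\[
m\Bigl(\Lambda\setminus\bigcap_{n\in\Z}g^{-n}(B_\eta(\Lambda))\Bigr)=m\bigl(\{x\in\Lambda:\cO_g(x)\not\subseteq B_\eta(\Lambda)\}\bigr)<\tfrac{\eta}{2}.
\]

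\emph{Fourth (the main obstacle).} This last estimate is the heart of the matter, and I expect it to be the genuinely hard step: it concerns \emph{all} iterates of $g$ simultaneously, whereas $C^0$-closeness of $g$ to $f$ controls only any fixed finite number of them (for those, the fact that $\Lambda$ is $f$-invariant and sits inside $B_{\eta/2}(\Lambda)$ already does the job). My plan for it is a compactness argument built on the generic continuity of the ergodic decomposition, in the style of the proof of Lemma~\ref{l.sunday}: assume a violating sequence $g_k\to f$, use the first step (an orbit lies in its empirical support a.e.) to see that $W_k:=\{x\in\Lambda:\cO_{g_k}(x)\not\subseteq B_\eta(\Lambda)\}$ is, mod $m$, contained in $\{x:\beta_{g_k,x}(M\setminus B_\rho(\Lambda))>0\}$; equidistribute $m|_{W_k}/m(W_k)$ along the $g_k$-dynamics to get absolutely continuous invariant measures $\nu_k$ (density bounded by $1/m(W_k)$) whose ergodic decompositions are absolutely continuous with respect to $\bkappa_{g_k}$ and carried by $\{\nu:\nu(M\setminus B_\rho(\Lambda))>0\}$; pass to the limit using $\bkappa_{g_k}\to\bkappa_f$ and the resulting $L^1(m)$-continuity of Birkhoff averages $g\mapsto\widehat\phi_g=P_g\phi$ at $f$ (which follows by polarizing the variance identity $\|\widehat\phi_g\|_{L^2(m)}^2=\Var(\phi,\bkappa_g)$ and using Lemma~\ref{l.var cont}); and play this off against the fact that $W_k\subseteq\Lambda$ with $\Lambda$ $f$-invariant, so that the corresponding $f$-equidistributions are carried by $\{\nu:\supp\nu\subseteq\cl\Lambda\}\subseteq\{\nu:\nu(M\setminus B_\rho(\Lambda))=0\}$, the genericity of $\rho$ ensuring that the relevant subsets of $\cM(M)$ are boundary-negligible so that weak-$*$ convergence passes to them. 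Converting this finite-time comparison into the infinite-time conclusion is where essentially all the work lies, and it is the only place where the generic continuity of the ergodic decomposition is really needed beyond the second step.
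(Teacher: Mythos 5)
Your Steps~1--3 are structurally correct and in fact reproduce the skeleton of the paper's argument: the paper also approximates $\Lambda$ by a set $X_{\cU,f}$ with $\cU$ open and $\bkappa_f(\partial\cU)=0$ (its Lemma~\ref{l.tokyo}), uses Lemma~\ref{l.sunday} to transfer this to nearby $g$, and defines $\tilde\Lambda = V_g \cap X_{\cU,g}$ where $V_g = \bigcap_{n\in\Z} g^n(V)$ for an open neighborhood $V$ of $\cl\Lambda$. (The paper draws $V$ from a pre-fixed countable family of open sets with $m(\partial V)=0$; your choice $V=B_\eta(\Lambda)$ would need the $m(\partial V)=0$ condition arranged separately, but that is a minor point.) Your reduction to the single estimate $m\bigl(\Lambda\setminus V_g\bigr)<\eta/2$ is exactly the crux, as the paper's proof also shows.

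The gap is in Step~4. You openly flag it as a plan rather than a proof, and the plan as sketched does not close. Writing $\cW=\{\mu:\mu(M\setminus \cl B_\rho(\Lambda))>0\}$ (which is the right set to use, since $\mu\mapsto\mu(M\setminus\cl B_\rho(\Lambda))$ is lower semicontinuous and hence $\cW$ is open), your argument puts $\bkappa_{g_k,\nu_k}(\cW)=1$ while the limit $\bkappa_{f,\nu}$ satisfies $\bkappa_{f,\nu}(\cW)=0$. But for an \emph{open} set, weak-$*$ convergence yields only $\liminf_k\bkappa_{g_k,\nu_k}(\cW)\ge\bkappa_{f,\nu}(\cW)$, i.e.\ $\liminf 1 \ge 0$, which is vacuous. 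No boundary-negligibility choice of $\rho$ can repair this, because the failure is not at the topological boundary of $\cW$ in $\cM(M)$: the $g_k$-ergodic measures carrying $\bkappa_{g_k,\nu_k}$ can give positive but vanishingly small mass to $M\setminus\cl B_\rho(\Lambda)$ and thus drift into $\cM(M)\setminus\cW$ in the limit. This is precisely the ``finitely many excursions, asymptotically negligible frequency'' scenario, and the ergodic decomposition sees nothing of it. So you correctly identified the hard step, but the contradiction you aim for is not there.

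The paper resolves this with a mechanism you did not anticipate, and it is the genuinely new ingredient of this proof. It is \emph{not} another application of Theorem~\ref{t.CED}. Instead, the paper observes (Lemma~\ref{l.milano}) that for a fixed open $V$ with $m(\partial V)=0$, the function $g\mapsto m(V_g)$ is upper semicontinuous (being an infimum of the continuous functions $g\mapsto m\bigl(\bigcap_{|n|<k}g^n(V)\bigr)$), so its continuity points form a residual set; the residual set $\cR$ in the theorem is taken inside the intersection of these, over a countable family $\cC$ of such $V$'s. Combined with the elementary one-sided inclusion $\limsup_k V_{g_k}\subset(\cl V)_f$, valid at \emph{every} $f$ and giving $m(V_g\setminus V_f)<\eta/4$ (Lemma~\ref{l.london}), the generic continuity of $g\mapsto m(V_g)$ at $f$ gives $m(V_g)>m(V_f)-\eta/4$ and hence $m(V_f\setminus V_g)<\eta/2$. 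Since $\Lambda\subset V_f$, this is exactly the bound your Step~4 needs, obtained without any passage-to-the-limit of ergodic decompositions. In short: your approach tries to squeeze both the measure-theoretic and the orbit-containment control out of $\bkappa_f$ alone; the paper uses $\bkappa_f$ for the former and a separate, simpler semicontinuity-and-genericity argument for the latter.
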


The proof will use Theorem~\ref{t.CED}, Lemma~\ref{l.sunday},
and a few other lemmas.

\begin{lemma}\label{l.tokyo}
If $f \in \Diff_m^r(M)$ and $\Lambda\subset M$ is a $f$-invariant Borel set,
then for any $\eta>0$ there exists an open subset $\cU$ of $\cM(M)$ such that
$\bkappa_{f}(\partial \cU) = 0$ and $m(\Lambda \vartriangle X_{\cU, f}) < \eta$.
\end{lemma}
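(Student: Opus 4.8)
The plan is to realise $\Lambda$, up to an $m$-null set, as a set of the form $X_{\cV,f}$ for a Borel subset $\cV\subset\cM(M)$, and then to replace $\cV$ by an open set with the same $X_{\cV,f}$ up to small error and with boundary of $\bkappa_f$-measure zero. If $m(\Lambda)=0$ we may take $\cU=\varnothing$ and if $m(\Lambda)=1$ we may take $\cU=\cM(M)$, so assume $0<m(\Lambda)<1$. The first step is to show that $\beta_{f,x}(\Lambda)=\charac_\Lambda(x)$ for $m$-a.e.\ $x$. Since $\Lambda$ is $f$-invariant, both $m|_\Lambda$ and $m|_{M\setminus\Lambda}$ are finite $f$-invariant measures, and by the pointwise description of the ergodic decomposition via \eqref{e.statistics} (applied to the normalisations of these measures, and using that measure-valued integration commutes with push-forward) we get $m|_\Lambda=\int_\Lambda\beta_{f,x}\,dm(x)$ and $m|_{M\setminus\Lambda}=\int_{M\setminus\Lambda}\beta_{f,x}\,dm(x)$. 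Evaluating the first identity on the Borel set $M\setminus\Lambda$ yields $\int_\Lambda\beta_{f,x}(M\setminus\Lambda)\,dm(x)=0$, hence $\beta_{f,x}(\Lambda)=1$ for $m$-a.e.\ $x\in\Lambda$; symmetrically, evaluating the second identity on $\Lambda$ gives $\beta_{f,x}(\Lambda)=0$ for $m$-a.e.\ $x\notin\Lambda$.

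Now set $\cV=\{\mu\in\cM(M);\ \mu(\Lambda)=1\}$, which is Borel by Lemma~\ref{l.measurable id}. By the first step, $X_{\cV,f}=\beta_f^{-1}(\cV)$ coincides with $\Lambda$ up to an $m$-null set, so $m(\Lambda\vartriangle X_{\cV,f})=0$. It remains to approximate $\cV$ by an open set. The space $\cM(M)$ is compact metrizable and $\bkappa_f$ is a Borel probability on it, so by regularity there are a closed set $C$ and an open set $O$ with $C\subset\cV\subset O$ and $\bkappa_f(O\setminus C)<\eta$ (if $C=\varnothing$ take $\cU=\varnothing$, if $O=\cM(M)$ take $\cU=\cM(M)$; otherwise proceed as follows). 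Fixing a metric on $\cM(M)$, let
$$
h(\mu)=\frac{d(\mu,\cM(M)\setminus O)}{d(\mu,\cM(M)\setminus O)+d(\mu,C)},
$$
a continuous $[0,1]$-valued function that equals $1$ on $C$ and $0$ outside $O$. The level sets $h^{-1}(t)$, $t\in(0,1)$, are pairwise disjoint, so all but countably many are $\bkappa_f$-null; choose such a $t$ and put $\cU=\{\mu;\ h(\mu)>t\}$. Then $\cU$ is open, $C\subset\cU\subset O$, and $\partial\cU\subset h^{-1}(t)$; hence $\bkappa_f(\partial\cU)=0$ and $\bkappa_f(\cU\vartriangle\cV)\le\bkappa_f(O\setminus C)<\eta$.

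To conclude, recall that $m(X_{\cW,f})=\bkappa_f(\cW)$ for every Borel $\cW\subset\cM(M)$ (take $\mu=m$ in the identity $\mu(X_{\cW,f})=\bkappa_{f,\mu}(\cW)$), and that, up to the $m$-null set where the Birkhoff averages fail to converge, $\cW\mapsto X_{\cW,f}$ is given by $\beta_f^{-1}$ and so commutes with symmetric differences. Therefore $m(X_{\cU,f}\vartriangle X_{\cV,f})=\bkappa_f(\cU\vartriangle\cV)<\eta$, and combining with $m(\Lambda\vartriangle X_{\cV,f})=0$ gives $m(\Lambda\vartriangle X_{\cU,f})<\eta$, as required.

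I expect the only genuinely delicate point to be the first step: one must use $f$-invariance of $\Lambda$ to see that $\Lambda$ is not merely approximated by, but actually equal mod $m$ to $\beta_f^{-1}(\{\mu(\Lambda)=1\})$. Once $\Lambda$ has been put in the form $X_{\cV,f}$ with $\cV$ Borel, the remainder is soft: regularity of Borel measures on a compact metric space together with the standard level-set trick (disjoint level sets of a continuous function are all but countably many null) suffices to arrange both $\bkappa_f(\partial\cU)=0$ and $\bkappa_f(\cU\vartriangle\cV)<\eta$.
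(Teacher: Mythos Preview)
Your proof is correct, but it takes a genuinely different route from the paper's. The paper works directly in $M$: it squeezes $\Lambda$ between a compact $K$ and an open $U$ with $m(U\setminus K)$ small, picks a continuous $\phi$ with $\charac_K\le\phi\le\charac_U$, and defines $\cU=\{\mu:\int\phi\,d\mu>a\}$ for a generic threshold $a\in(1/3,2/3)$. Then $X_{\cU,f}=\{\hat\phi>a\}$ mod~$0$, and the symmetric difference with $\Lambda$ is bounded by direct integral estimates. Your approach instead first proves the structural fact that $\Lambda=X_{\cV,f}$ \emph{exactly} mod~$0$ for the Borel set $\cV=\{\mu:\mu(\Lambda)=1\}$ (this uses the $f$-invariance of $\Lambda$ in an essential way, via the identity $m|_\Lambda=\int_\Lambda\beta_{f,x}\,dm(x)$), and then does all the approximation purely in $\cM(M)$ using regularity of $\bkappa_f$ and the level-set trick. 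The paper's argument is shorter and more concrete, and produces $\cU$ of a particularly simple form (a half-space for a continuous linear functional); your argument is more conceptual, cleanly separating the identification $\Lambda=X_{\cV,f}$ from the approximation step, and makes explicit that any invariant Borel set is already of the form $X_{\cV,f}$ before any approximation is needed.
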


\begin{proof}
Take a compact set~$K$ and an open set~$U$
such that $K \subset \Lambda \subset U$ and $m(U \setminus K) < \eta/6$.
Choose a continuous function $\phi:M \to \R$ such that $\charac_K \le \phi \le \charac_U$.
Given $a \in \R$, 
let $\cU$ be the set of measures $\mu \in \cM(M)$ such that $\int \phi \, d\mu > a$.
A moment's thought shows that $\partial \cU = \big\{ \mu \in \cM(M); \; \int \phi \, d\mu = a\big\}$.
Hence we can choose $a$ with $1/3 < a < 2/3$ so that  $\bkappa_{f}(\partial \cU) = 0$.
Notice that $X_{\cU,f} = \{\hat{\phi} > a \}$ mod~$0$, 
where $\hat{\phi} = \lim \frac{1}{n}\sum_{j=0}^{n-1} \phi \circ f^j$.
Therefore
\begin{align*}
m(X_{\cU,f} \setminus \Lambda) &\le \int_{X_{\cU,f} \setminus \Lambda} 3\hat\phi
\le \int_{M\setminus \Lambda} 3\hat\phi= \int_{M\setminus \Lambda} 3\phi < \eta/2 \, ,
\\
m(\Lambda \setminus X_{\cU,f}) &\le \int_{\Lambda \setminus X_{\cU,f}} 3(1-\hat\phi)
\le \int_{\Lambda}  3(1-\hat\phi) = \int_{\Lambda} 3 (1-\phi) < \eta/2 \, ,
\end{align*}
so $m(\Lambda \vartriangle X_{\cU, f}) < \eta$, as desired.
\end{proof}

If $V$ is any set and $f \in \Diff_m^r(M)$, let us denote
$$
V_f = \bigcap_{n \in \Z} f^n(V) .
$$

\begin{lemma}\label{l.milano}
Fixed any open $V$ with $m(\partial V) = 0$,
the measure $m(V_f)$
varies upper semi-continuously with $f$.
\end{lemma}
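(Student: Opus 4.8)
The plan is to reduce the bi-infinite intersection defining $V_f$ to finite-order truncations, on which the relevant measure depends continuously, and then to use continuity of $m$ from above. It suffices to work with the $C^0$ topology, since the $C^r$ topology is finer for $r\ge 1$; and since $\Diff_m^r(M)$ is metrizable I will argue with sequences, so the goal is: if $f_k\to f$ in $\Diff_m^r(M)$ then $\limsup_k m(V_{f_k})\le m(V_f)$.

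First I would record the truncations. For $N\in\N$ and $g\in\Diff_m^r(M)$ set $V_g^{(N)}=\bigcap_{|n|\le N} g^n(V)$, an open set, with $V_g^{(N+1)}\subset V_g^{(N)}$ and $\bigcap_N V_g^{(N)}=V_g$. By continuity of the finite measure $m$ from above, $m(V_g^{(N)})\downarrow m(V_g)$ as $N\to\infty$; in particular $m(V_g)\le m(V_g^{(N)})$ for every $N$.

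The key step is the claim that for each \emph{fixed} $N$ the function $g\mapsto m(V_g^{(N)})$ is continuous at $f$. Here is where the hypothesis enters: because $f$ preserves $m$ and $f^n$ is a bijection, $\partial\bigl(f^n(V)\bigr)=f^n(\partial V)$ has $m$-measure equal to $m(\partial V)=0$ for every $n$. Now let $f_k\to f$ in $\Diff_m^0(M)$; then $f_k^{-n}\to f^{-n}$ uniformly for each fixed $n$ (a finite composition of uniformly convergent homeomorphisms of the compact manifold $M$, using uniform continuity of the fixed limits), and therefore
\[
\charac_{V_{f_k}^{(N)}}=\prod_{|n|\le N}\charac_V\circ f_k^{-n}\longrightarrow\prod_{|n|\le N}\charac_V\circ f^{-n}=\charac_{V_f^{(N)}}
\]
pointwise on $M\setminus\bigcup_{|n|\le N} f^n(\partial V)$. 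Indeed, if $x\notin f^n(\partial V)$ then $f^{-n}(x)$ lies in the open set $V$ or in the open set $M\setminus\cl V$, and then $f_k^{-n}(x)$ lies in that same open set for all large $k$, so $\charac_V(f_k^{-n}(x))=\charac_V(f^{-n}(x))$. The exceptional set has $m$-measure zero and the integrands are bounded by $1$ on the probability space $(M,m)$, so dominated convergence gives $m(V_{f_k}^{(N)})\to m(V_f^{(N)})$, proving the claim.

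Finally I would combine the two ingredients: given $f_k\to f$, for each fixed $N$ we have $m(V_{f_k})\le m(V_{f_k}^{(N)})$, hence $\limsup_k m(V_{f_k})\le\lim_k m(V_{f_k}^{(N)})=m(V_f^{(N)})$; letting $N\to\infty$ and using $m(V_f^{(N)})\downarrow m(V_f)$ yields $\limsup_k m(V_{f_k})\le m(V_f)$, which is the asserted upper semi-continuity. I do not expect a genuine obstacle here: the one substantive point is realizing that one should truncate to finite order, and that $m(\partial V)=0$ is exactly what propagates (via invariance of $m$) to $m(f^n(\partial V))=0$ and thus makes the truncations vary continuously; the rest is routine bookkeeping with convergence of measures from above.
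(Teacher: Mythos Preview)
Your proof is correct and follows essentially the same approach as the paper: write $m(V_f)$ as the infimum over $N$ of $m\bigl(\bigcap_{|n|\le N} f^n(V)\bigr)$, note that each truncated quantity is continuous in $f$, and conclude upper semi-continuity. The paper's proof is a single displayed line asserting exactly this decomposition; you have additionally spelled out why the truncations vary continuously (using $m(\partial V)=0$ and dominated convergence), which the paper leaves implicit.
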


\begin{proof}
The function being considered is the infimum of a sequence of continuous functions:
\[
m (V_f) = \inf_{k >0} m \left( \bigcap_{|n|<k} f^n(V) \right) \, . \qedhere
\]
\end{proof}

\begin{lemma} \label{l.london}
For any $f \in \Diff_m^r(M)$, any open set $V$ with $m(\partial V) = 0$, and any $\eta>0$,
if $g\in \Diff_m^r(M)$ is sufficiently close to $f$ then $m(V_g \setminus V_f) < \eta$.
\end{lemma}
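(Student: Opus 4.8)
The plan is to replace the infinite intersection $V_g=\bigcap_{n\in\Z}g^n(V)$ by a finite one and then compare $g^n(V)$ with $f^n(V)$ for the finitely many relevant values of $n$, using that $g^n$ is $C^0$-close to $f^n$ when $g$ is $C^r$-close to $f$. The measure-zero hypothesis on $\partial V$ enters only to guarantee that the small discrepancy between $g^n(V)$ and $f^n(V)$, which sits near $\partial(f^nV)$, is $m$-negligible.

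Fix $\eta>0$. First I would choose $k$: since $\bigcap_{|n|\le k}f^n(V)$ decreases to $V_f$ as $k\to\infty$ and $m$ is finite, continuity of $m$ from above gives a $k$ with $m\big(\bigcap_{|n|\le k}f^n(V)\big)-m(V_f)<\eta/2$. Next I would choose $\rho>0$: for each $n$ we have $\partial(f^nV)=f^n(\partial V)$, hence $m(\partial(f^nV))=m(\partial V)=0$ because $f$ preserves $m$; since $\overline{B_\rho(\partial(f^nV))}$ decreases to $\partial(f^nV)$ as $\rho\downarrow0$, we may fix $\rho>0$ with $m\big(\bigcup_{|n|\le k}B_\rho(\partial(f^nV))\big)<\eta/2$. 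Finally, by continuity of composition (and of $g\mapsto g^{-1}$) in $\Diff^r_m(M)$, there is a neighborhood of $f$ such that every $g$ in it satisfies $\sup_{x\in M}d(g^nx,f^nx)<\rho$ for all $|n|\le k$.

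The core step is the inclusion $g^n(V)\subset f^n(V)\cup B_\rho(\partial(f^nV))$ for $|n|\le k$: if $x\in g^n(V)$ then $y:=g^{-n}(x)\in V$, so $f^n(y)\in f^n(V)$ and $d\big(x,f^n(y)\big)=d\big(g^n(y),f^n(y)\big)<\rho$, hence $x\in B_\rho(f^n(V))$; and for any open set $A$, a point $z\in B_\rho(A)\setminus A$ must satisfy $d(z,\partial A)<\rho$ (otherwise $z\notin\overline A$, and $B_\rho(z)$, being connected and disjoint from $\partial A$, would lie entirely in the open set $M\setminus\overline A$, contradicting $d(z,A)<\rho$), so $B_\rho(A)\setminus A\subset B_\rho(\partial A)$. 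Intersecting these inclusions over $|n|\le k$ and using $\bigcap_i(A_i\cup C_i)\subset(\bigcap_iA_i)\cup(\bigcup_iC_i)$, we obtain
$$
V_g\subset\bigcap_{|n|\le k}g^n(V)\subset\Big(\bigcap_{|n|\le k}f^n(V)\Big)\cup\bigcup_{|n|\le k}B_\rho(\partial(f^nV)).
$$
Therefore $V_g\setminus V_f$ is contained in the union of $\big(\bigcap_{|n|\le k}f^n(V)\big)\setminus V_f$ and $\bigcup_{|n|\le k}B_\rho(\partial(f^nV))$, whose measures are each $<\eta/2$ by the choices above, yielding $m(V_g\setminus V_f)<\eta$.

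There is no serious obstacle here. The one point needing a little care is the elementary geometric claim that $g^n(V)$ and $f^n(V)$ agree outside a $\rho$-neighborhood of $\partial(f^nV)$, which uses that $M$ is a compact Riemannian manifold (so small metric balls are connected); and one should track the quantifier order, namely that $k$, then $\rho$, then the neighborhood of $f$ each depend only on $f$, $V$, $\eta$, so that a single neighborhood of $f$ works.
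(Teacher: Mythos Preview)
Your proof is correct, but it takes a different and more hands-on route than the paper's. The paper argues in two lines via the reverse Fatou inequality: for any sequence $g_k\to f$ one has $\limsup_k V_{g_k}\subset(\cl V)_f$ (if $x\in V_{g_k}$ for infinitely many $k$, then $g_k^{-n}(x)\in V$ along a subsequence, so $f^{-n}(x)\in\cl V$ for every $n$), and $(\cl V)_f\setminus V_f\subset\bigcup_n f^n(\partial V)$ is a null set; hence $\limsup_k m(V_{g_k}\setminus V_f)\le m\big(\limsup_k(V_{g_k}\setminus V_f)\big)=0$. This approach uses no geometry at all---no $\rho$-neighborhoods, no connectivity of balls, no finite truncation of the intersection. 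Your argument instead truncates the infinite intersection, then controls $g^n(V)\setminus f^n(V)$ by a thin shell around $\partial(f^nV)$; it is longer but perfectly sound and more explicitly quantitative. One minor remark: the inclusion $B_\rho(A)\setminus A\subset B_\rho(\partial A)$ for open $A$ can be obtained without invoking connectivity of small balls, simply because any path of length $<\rho$ from a point outside $A$ to a point inside $A$ must meet $\partial A$; this removes the need to shrink $\rho$ below the injectivity radius.
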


\begin{proof}
Indeed, for any sequence $g_k \to f$ we have $\limsup V_{g_k} \subset (\cl V)_f$,
and thus $\limsup m(V_{g_k} \setminus V_f) \le m( \limsup (V_{g_k} \setminus V_f))  \le m(\partial V) = 0$.
\end{proof}

\begin{proof}[Proof of Theorem~\ref{t.PIS}]
Fix a countable family $\cC$ of open subsets of $M$,
each with a boundary of zero measure,
such that for any compact
set $K$
and any $\eta>0$, there exists $V \in \cC$ such that 
$K \subset V \subset B_\eta(K)$.
Let $\cR\subset\Diff_m(M)$ be the intersection of the residual set of Theorem~\ref{t.CED}
with the set of points of continuity of $f \mapsto m(V_f)$ over $V \in \cC$.
Due to Lemma~\ref{l.milano}, $\cR$ is a residual set.

Fix $f\in \cR$, a Borel $f$-invariant set $\Lambda$, and $\eta>0$.
Let $V \in \cC$ be such that $\cl \Lambda \subset V \subset B_{\eta/2}(\cl \Lambda)$.
Using Lemma~\ref{l.tokyo}, find an open set $\cU \subset \cM(M)$
such that $\bkappa_{f}(\partial \cU) = 0$ and $m(\Lambda \vartriangle X_{\cU, f}) < \eta/4$.
If $g$ is sufficiently close to $f$ then 
$m(X_{\cU,g} \vartriangle X_{\cU,f}) < \eta/4$, by Lemma~\ref{l.sunday},
$m(V_g \setminus V_f) < \eta/4$, by Lemma~\ref{l.london},
and $m(V_g) > m(V_f) - \eta/4$, by continuity.
Then
$$
m(V_f\setminus V_g) = m(V_f) + m(V_g\setminus V_f) - m(V_g) < \eta/2.
$$
Given $g$ as above, define $\tilde \Lambda = V_g \cap X_{\cU, g}$.
This is a $g$-invariant Borel set contained in $B_{\eta}(\Lambda)$.
Moreover,
$$
\tilde\Lambda \vartriangle \Lambda \subset 
(\Lambda \setminus V_g) \cup (\Lambda \vartriangle X_{\cU, g}) \subset
(V_f \setminus V_g) \cup (\Lambda \vartriangle X_{\cU, f})  \cup (X_{\cU, f} \vartriangle X_{\cU, g}) 
$$
and therefore $m(\tilde\Lambda \vartriangle \Lambda) < \eta$.
\end{proof}

%%%%%%%%%
\subsection{Generic Continuity of the Lyapunov Spectrum}\label{ss.CLS}

\begin{thm}%[Continuity of Lyapunov Spectrum] 
\label{t.CLS}
Fix an integer $r\ge 1$.
For each $i$, the points of continuity of the map
$$
\lambda_i: \Diff_m^r(M) \to L^1(m)
$$	
form  a residual subset. 
\end{thm}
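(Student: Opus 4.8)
The plan is to reduce the statement to the partial sums $\Sigma_j(f,\cdot):=\lambda_1(f,\cdot)+\cdots+\lambda_j(f,\cdot)$, which are $f$-invariant, bounded by a locally uniform constant, and given by the Kingman limit of the subadditive cocycle $(x,n)\mapsto\log\|\Lambda^j Df^n(x)\|$. Since $\lambda_i=\Sigma_i-\Sigma_{i-1}$ (with $\Sigma_0\equiv 0$), it suffices to produce, for each $j$, a residual set $\cR_j\subset\Diff^r_m(M)$ of points of continuity of $f\mapsto\Sigma_j(f,\cdot)\in L^1(m)$; the set for $\lambda_i$ is then $\cR_i\cap\cR_{i-1}$. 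To build $\cR_j$, note first that because $\log\|\Lambda^j Dg^n\|$ and, for fixed $a\in\R$, $\max(\log\|\Lambda^j Dg^n\|,na)$ are subadditive cocycles over $g$ while $m$ is $g$-invariant, Fekete's lemma identifies $\int\Sigma_j(g,\cdot)\,dm$ and $\int\max(\Sigma_j(g,\cdot),a)\,dm$ with $\inf_n\tfrac1n\int\log\|\Lambda^j Dg^n\|\,dm$ and $\inf_n\tfrac1n\int\max(\log\|\Lambda^j Dg^n\|,na)\,dm$ respectively; for $r\ge1$ each term is continuous in $g$, so both functions of $g$ are upper semicontinuous and hence have residual sets of continuity points. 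Let $\cR_j$ be the intersection of: the continuity points of $g\mapsto\bnu_g$ (residual by Theorem~\ref{t.CED} and its corollary in~\S\ref{ss.more}), the continuity points of $g\mapsto\int\Sigma_j(g,\cdot)\,dm$, and the continuity points of $g\mapsto\int\max(\Sigma_j(g,\cdot),a)\,dm$ for each $a\in\Q$. Then $\cR_j$ is residual.

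Now fix $f\in\cR_j$ and a sequence $f_k\to f$, and let $C$ bound all the functions $|\Sigma_j(f_k,\cdot)|$ and $|\Sigma_j(f,\cdot)|$. The heart of the argument is that $\Sigma_j(f_k,\cdot)\to\Sigma_j(f,\cdot)$ \emph{weakly} in $L^1(m)$. Here one uses that $\underline\Sigma_j(g,\mu):=\inf_n\tfrac1n\int\log\|\Lambda^j Dg^n\|\,d\mu$ is jointly upper semicontinuous in $(g,\mu)$, bounded near $f$, equals $\int\Sigma_j(g,\cdot)\,d\mu$ for every $g$-invariant $\mu$, and hence equals the constant $\Sigma_j(g,x)$ when $\mu=\beta_{g,x}$ is an ergodic component of $m$. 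Consequently, for continuous $\phi\ge0$ on $M$, $\int_M\phi\,\Sigma_j(f_k,\cdot)\,dm=\int_{M\times\cM(M)}\phi(x)\,\underline\Sigma_j(f_k,\mu)\,d\bnu_{f_k}(x,\mu)$; pushing the measures $\bnu_{f_k}$ forward by $(x,\mu)\mapsto(x,f_k,\mu)$ into $M\times K\times\cM(M)$, where $K=\{f\}\cup\{f_k\}$ is compact, we obtain weak convergence to the push-forward of $\bnu_f$ by $(x,\mu)\mapsto(x,f,\mu)$ (because $f_k\to f$ and $\bnu_{f_k}\to\bnu_f$), and since $(x,g,\mu)\mapsto\phi(x)\underline\Sigma_j(g,\mu)$ is bounded and upper semicontinuous there, $\limsup_k\int_M\phi\,\Sigma_j(f_k,\cdot)\,dm\le\int_M\phi\,\Sigma_j(f,\cdot)\,dm$. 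Applying this both to $\phi$ and to $\|\phi\|_\infty-\phi$, and using that $\int\Sigma_j(f_k,\cdot)\,dm\to\int\Sigma_j(f,\cdot)\,dm$, the $\limsup$ inequality becomes a genuine limit for all continuous $\phi$, hence, by the uniform bound $C$ and density, for all $\phi\in L^\infty(m)$.

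To upgrade to strong convergence, observe that the laws $(\Sigma_j(f_k,\cdot))_*m$ converge weakly to $(\Sigma_j(f,\cdot))_*m$ as measures on $[-C,C]$: by the defining properties of $\cR_j$, $\int\psi(\Sigma_j(f_k,\cdot))\,dm\to\int\psi(\Sigma_j(f,\cdot))\,dm$ for $\psi(t)=1$, $\psi(t)=t$, and $\psi(t)=\max(t,a)$ with $a\in\Q$ (the last because $\max(\Sigma_j(g,\cdot),a)$ is itself a Kingman limit of the subadditive cocycle $\max(\log\|\Lambda^j Dg^n\|,na)$), and these functions $\psi$ linearly span a uniformly dense subspace of $C([-C,C])$. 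In particular $\|\Sigma_j(f_k,\cdot)\|_{L^2(m)}^2=\int t^2\,d(\Sigma_j(f_k,\cdot))_*m\to\|\Sigma_j(f,\cdot)\|_{L^2(m)}^2$. Combining this norm convergence with the weak convergence (which, all functions being bounded, also holds in $L^2(m)$) gives $\|\Sigma_j(f_k,\cdot)-\Sigma_j(f,\cdot)\|_{L^2(m)}^2\to0$, so $\Sigma_j(f_k,\cdot)\to\Sigma_j(f,\cdot)$ in $L^2(m)$ and a fortiori in $L^1(m)$. Hence every $f\in\cR_j$ is a point of continuity of $f\mapsto\Sigma_j(f,\cdot)$, and the theorem follows.

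The step I expect to be the main obstacle is the weak $L^1$-convergence. The convergence of laws is comparatively soft, being a consequence of upper semicontinuity of Fekete-type integrals; but it carries no information about \emph{where} on $M$ the various Lyapunov values are realized, and so cannot yield $L^1$-convergence on its own. Controlling the location of the level sets is exactly what forces the use of the generic continuity of the ergodic decomposition, encoded through the graph measures $\bnu_g$. The subtlety there is that $\underline\Sigma_j$ is only one-sidedly (upper) semicontinuous, so $\bnu_g$-continuity initially produces just a one-sided inequality; turning it into a genuine two-sided estimate relies on the continuity of the total integral $g\mapsto\int_M\Sigma_j(g,\cdot)\,dm$.
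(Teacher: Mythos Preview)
Your argument is correct and takes a genuinely different route from the paper's.  Both proofs rest on the same two pillars: the joint upper semicontinuity of $(g,\mu)\mapsto\inf_n\tfrac{1}{n}\int\log\|\wedge^j Dg^n\|\,d\mu$, and the generic continuity of the ergodic decomposition (Theorem~\ref{t.CED}).  From there, however, the strategies diverge.  The paper works directly in $L^1$: it partitions $\cM(M)$ into approximate level sets $\cU_1,\ldots,\cU_J$ of $L_i^{(n)}(f,\cdot)$, approximates the invariant function $L_i(g,x)$ by the simple function $\sum_j c_j \charac_{X_{\cU_j,g}}(x)$ (this is the Claim around~\eqref{e.bla}), and then invokes Lemma~\ref{l.sunday} to show $m(X_{\cU_j,g}\vartriangle X_{\cU_j,f})\to 0$ as $g\to f$, forcing the simple functions (and hence $L_i(g,\cdot)$ and $L_i(f,\cdot)$) to be $L^1$-close.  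You instead decouple the problem into weak convergence and norm convergence: the first is obtained from the graph measures $\bnu_g$ via the Portmanteau inequality for bounded u.s.c.\ integrands, corrected to a genuine limit using the continuity of the total integral at $f$; the second comes from the auxiliary subadditive cocycles $\max(\log\|\wedge^j Dg^n\|,na)$, whose Fekete infima give continuity of $g\mapsto\int\max(\Sigma_j(g,\cdot),a)\,dm$ on a residual set and hence weak convergence of the laws $(\Sigma_j(g,\cdot))_*m$; the Hilbert-space identity ``weak $+$ norm $=$ strong'' then finishes.  The paper's approach is slightly more self-contained (it uses $\bkappa_g$ and Lemma~\ref{l.sunday} directly, rather than the corollary on $\bnu_g$ in~\S\ref{ss.more}), while yours separates the roles of the ingredients more cleanly; the truncated cocycles are a neat device, at the cost of enlarging the countable family of residual conditions cut into $\cR_j$.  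Both arguments extend verbatim to the Scholium for general subadditive families.
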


The proof uses Theorem~\ref{t.CED} and Lemma~\ref{l.sunday}:

\begin{proof}%[Proof of Theorem~\ref{t.CLS}]
For any $\mu \in \cM(M)$ ($f$-invariant or not), we define 
$$
L_i(f,\mu) = \inf_n L_i^{(n)}(f,\mu) \quad \text{where} \quad
L_i^{(n)}(f,\mu) = \int \frac{1}{n} \log \| \wed^i Df^n \| \, d\mu \, .
$$
Then $L_i : \Diff_m^r(M) \times \cM(M) \to \R$ is an upper semi-continuous function.
Notice that if $\mu \in \cM(f)$ then
$$
L_i(f,\mu) = \int L_i(f,x) \, d\mu(x) \quad \text{where} \quad L_i(f,x) 
= \lambda_1(f,x) + \cdots + \lambda_i(f,x) \, .
$$

Let $\cR_i \subset \Diff_m^r(M)$ be the set of continuity points of the
map $L_i( \mathord{\cdot}, m)$.
Let $\cR$ be the residual set given by Theorem~\ref{t.CED}.
Fix any $f \in \cR_i \cap \cR$.
To prove the theorem, we will show that the map
$L_i: \Diff_m^r(M) \to L^1(m)$ is continuous on $f$.

Let $\eps>0$.
Choose $n$ such that $L_i^{(n)}(f,m) < \eps + L_i(f,m)$.
By continuity of $L_i^{(n)}( \mathord{\cdot}, \mathord{\cdot})$
and compactness of $\supp \bkappa_{f}$,
there are open sets $\cU \supset \supp \bkappa_{f}$ and $\cV \ni f$ such that
$$
g \in \cV, \ \mu \in \cU \ \Rightarrow \
|L_i^{(n)}(g,\mu) - L_i^{(n)}(f,\mu)| < \eps \, .
$$

Let $c_0 < \cdots < c_J$ be real numbers such that $c_j - c_{j-1} < \eps$ and
the set of $\mu \in \cU$ such that $L_i^{(n)}(f,\mu) \in (-\infty, c_0] \cup \{c_1, \ldots, c_{J-1}\} \cup [c_J, +\infty)$
has zero $\bkappa_{f}$-measure.
Define a $\bkappa_{f}$-mod~$0$ partition of $\cU$ in open sets
$\cU_j = \{ \mu \in \cU ; \; c_{j-1} < L_i^{(n)}(f,\mu) < c_{j} \}$, $j=1,\ldots, J$
(whose boundaries have zero $\bkappa_{f}$-measure).
Define a function $\Gamma:\cM(M) \to \R$ by $\Gamma = \sum_{j=1}^J c_j \mathbbm{1}_{\cU_j}$.

\begin{claim}
For every $g$ sufficiently close to $f$, we have
\begin{equation}\label{e.bla}
\int \big| L_i(g,\mu) - \Gamma(\mu) \big| \, d\bkappa_{g}(\mu) < O(\eps).
\end{equation}
\end{claim}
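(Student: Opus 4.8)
The plan is to estimate the integral in \eqref{e.bla} by breaking $L^2_0(m)$-style reasoning into a per-piece comparison: for each $j$, the measures $\mu$ lying in $\cU_j$ are those for which $L_i^{(n)}(f,\mu)$ is within $\eps$ of $c_j$, and on the open set $\cU \supset \supp \bkappa_f$ the perturbation bound $|L_i^{(n)}(g,\mu) - L_i^{(n)}(f,\mu)| < \eps$ holds for $g \in \cV$. First I would use that for any $f$-invariant $\mu$ one has $L_i(f,\mu) = \inf_n L_i^{(n)}(f,\mu) \le L_i^{(n)}(f,\mu)$, so on $\cU_j$ we get $L_i(f,\mu) \le c_j$, hence $L_i(f,\mu) - \Gamma(\mu) \le 0$ there. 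For the reverse inequality I would integrate: since $\int L_i(f,\mu)\,d\bkappa_f(\mu) = L_i(f,m)$ (this is the ergodic-decomposition identity, valid because $\bkappa_f$ is supported on $\cM_{\mathrm{erg}}(f)$) and $\int \Gamma\,d\bkappa_f \le \eps + \int L_i^{(n)}(f,\mu)\,d\bkappa_f(\mu) = \eps + L_i^{(n)}(f,m) < 2\eps + L_i(f,m)$, the function $\Gamma - L_i(f,\cdot)$ is nonnegative on $\supp\bkappa_f$ (mod $0$) with integral $< 2\eps$; combined with the pointwise sign this gives $\int |L_i(f,\mu) - \Gamma(\mu)|\,d\bkappa_f(\mu) < 2\eps$.

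Next I would pass from $f$ to nearby $g$. The two mechanisms for control are: (i) the perturbation estimate $|L_i^{(n)}(g,\mu) - L_i^{(n)}(f,\mu)| < \eps$ on $\cU$, which together with upper semicontinuity of $L_i$ gives $L_i(g,\mu) \le L_i^{(n)}(g,\mu) < L_i^{(n)}(f,\mu) + \eps$, so that $L_i(g,\mu) - \Gamma(\mu) \le 2\eps$ for $\mu \in \cU_j \cap \cM(g)$; (ii) the genericity of $f$, i.e.\ $f \in \cR_i \cap \cR$, which gives both $\bkappa_g \to \bkappa_f$ (Theorem~\ref{t.CED}) and $L_i(g,m) \to L_i(f,m)$ (continuity of $L_i(\cdot,m)$ at $f$). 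Because $\bkappa_f(\partial \cU_j) = 0$ and $\bkappa_g \to \bkappa_f$, we have $\bkappa_g(\cU_j) \to \bkappa_f(\cU_j)$ and $\bkappa_g(\cM(M)\setminus\cU) \to 0$, so for $g$ close to $f$ the measure $\bkappa_g$ is concentrated on $\cU$ up to an arbitrarily small error and assigns to each $\cU_j$ nearly the mass $\bkappa_f(\cU_j)$. This lets me replace $\int \Gamma\,d\bkappa_g$ by $\int\Gamma\,d\bkappa_f + O(\eps)$ and bound the $\bkappa_g$-mass outside $\cU$ (where $\Gamma = 0$ but $L_i(g,\cdot)$ could be large) — here I would need the integrability/boundedness coming from $L_i(g,\mu) \in [-C,C]$ with $C$ depending only on $\sup_{M}\|Df^{\pm1}\|$ over a neighborhood of $f$, which is uniform, so the contribution of that small-mass set is $O(\eps)$.

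Then I would assemble the two-sided bound: on $\cU \cap \cM(g)$ (full $\bkappa_g$-measure mod the small outside part), $-\,(\text{small}) \le \Gamma(\mu) - L_i(g,\mu)$ fails to be pointwise one-signed, so instead I integrate. Writing $\int (\Gamma - L_i(g,\cdot))\,d\bkappa_g = \int\Gamma\,d\bkappa_g - L_i(g,m)$, and using $\int\Gamma\,d\bkappa_g < L_i(f,m) + O(\eps)$ and $L_i(g,m) > L_i(f,m) - O(\eps)$, I get $\int(\Gamma - L_i(g,\cdot))\,d\bkappa_g < O(\eps)$; meanwhile the pointwise upper bound $\Gamma(\mu) - L_i(g,\mu) \ge -2\eps$ on $\cU$ (from mechanism (i)) controls the negative part, so the $L^1(\bkappa_g)$-norm of $\Gamma - L_i(g,\cdot)$ over $\cU$ is $O(\eps)$. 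Adding the $O(\eps)$ contribution from outside $\cU$ yields \eqref{e.bla}.

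The main obstacle I anticipate is the interchange between ``$f$-world'' and ``$g$-world'' when comparing $\int\Gamma\,d\bkappa_g$ with $\int L_i^{(n)}(f,\cdot)\,d\bkappa_g$: the function $L_i^{(n)}(f,\cdot)$ is continuous, so $\int L_i^{(n)}(f,\cdot)\,d\bkappa_g \to \int L_i^{(n)}(f,\cdot)\,d\bkappa_f = L_i^{(n)}(f,m)$ by weak-$*$ convergence, but I must be careful that $\Gamma$ is only a step-function approximation of $L_i^{(n)}(f,\cdot)$ with mesh $\eps$ and that the sets $\cU_j$ have $\bkappa_f$-null boundary — this is exactly why the thresholds $c_0,\dots,c_J$ were chosen generically. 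The other delicate point is ensuring the constant hidden in $O(\eps)$ does not depend on $n$ (which was chosen depending on $\eps$); since all estimates above are either pointwise inequalities with absolute constants or integrals of the fixed continuous functions $L_i^{(n)}(f,\cdot)$ and $\Gamma$ against $\bkappa_g \to \bkappa_f$, $n$ enters only through the already-fixed choice and the bound is genuinely $O(\eps)$ with a universal implied constant.
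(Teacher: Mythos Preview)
Your proposal is correct and follows essentially the same strategy as the paper: establish the one-sided pointwise bound $L_i(g,\mu) \le \Gamma(\mu) + O(\eps)$ on $\bigcup_j \cU_j$, control the integral $\int(\Gamma - L_i(g,\cdot))\,d\bkappa_g$ via the choice of $n$ and the continuity of $L_i(\cdot,m)$ at $f$, then combine these to bound the $L^1(\bkappa_g)$-norm, handling the small-mass complement separately. The only noteworthy difference is that the paper bounds $\int \Gamma\,d\bkappa_g$ by using the pointwise inequality $\Gamma \le L_i^{(n)}(f,\cdot)+\eps$ together with the \emph{linearity} of $\mu \mapsto L_i^{(n)}(f,\mu)$ (so that $\int L_i^{(n)}(f,\mu)\,d\bkappa_g(\mu)=L_i^{(n)}(f,m)$ directly, since $\bkappa_g$ is a decomposition of $m$), whereas you route through $\int\Gamma\,d\bkappa_g \approx \int\Gamma\,d\bkappa_f$ via weak-$*$ convergence and the null-boundary condition on the $\cU_j$; both work, but the paper's shortcut makes your opening paragraph (the separate $f$-estimate) unnecessary.
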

Indeed, since $f \in \cR$, if $g$ is close to $f$ then
%$\bkappa_{g}$ is close to $\bkappa_{f}$ and therefore
the set $\cZ = \cM(M) \setminus \bigcup_{j=1}^J \cU_j$
has $\bkappa_{g}$-measure less than $\eps$.
%Thus we can assume $\int_{\cZ} | L_i(g,\mathord{\cdot}) | \, d\bkappa_{g} < \eps$.
On the other hand,
\begin{multline*}
\mu \in \cM(g) \cap \cU_j \ \Rightarrow \
L_i(g,\mu) \le L_i^{(n)}(g,\mu) < L_i^{(n)}(f,\mu) + \eps < c_j + \eps = \Gamma(\mu) + \eps \\
\Rightarrow \
\big| L_i(g,\mu) - \Gamma(\mu) \big| \le 2 \eps - L_i(g, \mu) + \Gamma(\mu)
%\le 3 \eps - L_i(g, \mu) + c_{j-1}   \\
< 3 \eps - L_i(g, \mu) + L^{(n)}_i(f, \mu)  \, .
\end{multline*}
Therefore, letting $C$ be an upper bound for $\log \|\wed^i Dg \|$ on a neighborhood of $f$,
and also for $|c_0|$, $|c_J|$,
we can write
\begin{align*}
\int \big| L_i(g,\mu) - \Gamma(\mu) \big| \, d\bkappa_{g}(\mu)
&\le 4C\bkappa_{g}(\cZ) + 3 \eps + \int \big[L^{(n)}_i(f, \mu) - L_i(g,\mu) \big] \, d\bkappa_{g}(\mu) \\
&\le (4C+3)\eps + L^{(n)}_i(f, m) - L_i(g, m) \\
&\le (4C+4)\eps + L_i(f, m)       - L_i(g, m)
\end{align*}
Since $f \in \cR_i$, $L_i(g, m)$ is close to $L_i(f, m)$
provided $g$ is close enough to $f$, thus completing the proof of \eqref{e.bla}.

Assume that $g$ is close enough to $f$ so that
$m(X_{\cU_j, g} \vartriangle X_{\cU_j, f}) < \eps$ for each~$j$
(see Lemma~\ref{l.sunday}).

We claim that the function $L_i(g)$ is close in $L^1(m)$ to $\sum_{j=1}^J c_j \mathbbm{1}_{X_{\cU_j,g}}$
and hence to  $\sum_{j=1}^J c_j \mathbbm{1}_{X_{\cU_j,f}}$ and hence to $L_i(f)$.
Indeed, the functions $L_i(g)$ and $\sum_{j=1}^J c_j \mathbbm{1}_{X_{\cU_j,g}}$
are both $g$-invariant, and it follows that their $L^1(m)$-distance is exactly the left hand side of \eqref{e.bla}.
\end{proof}

In fact, the proof above yields a more general result, which we now describe.
For each $f \in \Diff_m^r(M)$, let $\phi_{f,n}$, $n\in \Z_+$ be a 
sequence of continuous functions that is subadditive with respect
to $f$, that is,
$$
\phi_{f,k+n} \le \phi_{f,k} \circ f^n + \phi_{f,n} \, .
$$
Also assume that $f \mapsto \phi_{f,n} in C^0(M,R)$ is continuous for each $n$,
and that $\frac{1}{n} | \phi_{f,n}| \le C_f$ for some locally bounded function
$f \mapsto C_f \in \R$.
By the Subaddditive Ergodic Theorem, $\Phi_f = \lim_{n \to \infty} \frac{1}{n} \phi_{f,n}$
is defined $m$-almost everywhere.
Our result is: 

\begin{scho}
The points of continuity of the map $f \in \Diff_m^r(M) \mapsto \Phi_f \in L^1(m)$
form a residual subset.
\end{scho}

\begin{rem}\label{r.other measures}
Theorems \ref{t.CED}, \ref{t.PIS} and \ref{t.CLS} 
remain true (with identical proofs)
if Lebesgue measure $m$ is replaced by any other Borel probability $\mu$.
However, the spaces $\Diff_\mu^r(M)$ are in general very small,
and we couldn't conceive of any applications.
\end{rem}

%%%%%%%%%%%%%%%%%%%%%%%%%%
\section{More Ingredients}\label{s.ingredients}

\subsection{Known \texorpdfstring{$C^1$}{C1}-Generic Results}\label{ss.c1 old stuff}%Complication to avoid problems with hyperref

Here we collect some previously known residual properties for volume-preserving maps.
The first is the volume-preserving version of the Kupka--Smale Theorem, see \cite{Robinson}:

\begin{otherthm}\label{t.KS}
Assume $\dim M \ge 3$, $r \in \Z_+$.
Generically in $\Diff^r_m(M)$, every periodic orbit is hyperbolic,
and for every pair of periodic points $p$ and $q$, the manifolds $W^u(p)$ and $W^s(q)$ are transverse.
\end{otherthm}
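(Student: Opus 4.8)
The plan is to follow the classical scheme of the Kupka--Smale theorem, adapted to the conservative category as in \cite{Robinson}, splitting the argument into a hyperbolicity part and a transversality part and then taking a countable intersection.

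For hyperbolicity of periodic orbits: for each $N \in \Z_+$ let $\cH_N \subset \Diff^r_m(M)$ be the set of $f$ all of whose periodic points of period at most $N$ are hyperbolic. I would show each $\cH_N$ is open and dense, so that $\bigcap_N \cH_N$ is residual. Openness follows from the implicit function theorem: hyperbolic periodic points persist, depend continuously on $f$, and remain isolated, so no new low-period periodic orbit can be created nearby. For density, given $f$ and a non-hyperbolic periodic point $x$ of period $k \le N$, the obstruction is that $Df^k(x)$ has an eigenvalue on the unit circle; using a conservative Franks-type perturbation lemma one modifies $f$ inside a small neighborhood of the orbit so that the new return derivative has all eigenvalues off the unit circle, the admissible linear perturbations being exactly those that keep the derivative in $\mathrm{SL}(d,\R)$, $d = \dim M$. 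Here $d \ge 3$ is essential: in $\mathrm{SL}(2,\R)$ an elliptic matrix cannot be perturbed to a hyperbolic one, since the parabolic matrices separate the two open conjugacy-type regions, whereas for $d \ge 3$ the matrices with no eigenvalue on the unit circle are dense in $\mathrm{SL}(d,\R)$. A finiteness argument (there are only finitely many periodic orbits of period $\le N$ to worry about once they are close to being hyperbolic, by compactness) lets one remove all of them with a single small perturbation.

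For transversality of invariant manifolds: fix $f$ in the residual set just constructed, so its periodic orbits are hyperbolic, isolated, and — together with their local stable and unstable manifolds — persist and vary continuously under perturbation. There are then at most countably many periodic orbits, so it suffices to handle one ordered pair $(p,q)$ at a time and, moreover, to exhaust $W^u(p)$ and $W^s(q)$ by countably many compact sub-discs $D^u$ and $D^s$. For a fixed quadruple $(p,q,D^u,D^s)$, the set of $g$ near $f$ for which the continuations of $D^u$ and $D^s$ meet transversally is open; density is the crux. Near a point of nontransverse intersection I would invoke a parametric transversality (Abraham density) argument: build a finite-dimensional family $g_t$ of \emph{volume-preserving} perturbations supported in a small ball — e.g.\ time-one maps of compactly supported divergence-free vector fields — rich enough that the associated evaluation map is a submersion onto the relevant jet space, and conclude that for generic parameter $t$ the perturbed manifolds are transverse along $D^u \cap D^s$. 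The only point beyond the non-conservative case is that the perturbing family must remain in $\Diff^r_m(M)$, which is why one uses divergence-free vector fields; in dimension $\ge 3$ these carry enough freedom to realize an arbitrary transverse displacement.

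Finally, I would intersect the residual set controlling hyperbolicity with, over all $(p,q,D^u,D^s)$ ranging over a fixed countable family, the open and dense sets of transversality; the resulting set is residual in $\Diff^r_m(M)$, which proves the theorem. I expect the main obstacle to be the two density steps, i.e.\ realizing the needed perturbations strictly inside the volume-preserving class: the conservative Franks lemma together with the dimension hypothesis for hyperbolicity, and the construction of a sufficiently ample family of compactly supported divergence-free perturbations for transversality. Both are exactly the technical content supplied by \cite{Robinson}.
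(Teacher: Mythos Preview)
The paper does not prove this statement at all: it is quoted as the volume-preserving Kupka--Smale theorem and attributed to \cite{Robinson} with the single line ``The first is the volume-preserving version of the Kupka--Smale Theorem, see \cite{Robinson}''. So there is nothing to compare your argument against in the paper itself.

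Your outline is a faithful sketch of the classical proof and correctly identifies the one place where the hypothesis $\dim M \ge 3$ enters, namely the density of hyperbolic matrices in $\mathrm{SL}(d,\R)$ for $d \ge 3$ (versus the open set of elliptic elements in $\mathrm{SL}(2,\R)$). One small caution: you invoke a ``conservative Franks-type perturbation lemma'', but Franks' lemma is a $C^1$ tool and the statement is for all $r \in \Z_+$; in Robinson's paper the perturbations are carried out directly via generating functions / divergence-free vector fields in local coordinates rather than by a Franks lemma, which is what makes the argument go through in $C^r$. With that adjustment your plan matches the standard reference.
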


The next is a ``connecting'' property:

\begin{otherthm}\label{t.connecting}
Assume $\dim M \ge 3$.
Generically in $\Diff^1_m(M)$,
if $p$ and $q$ are periodic points with $\dim W^u(p) \ge \dim W^u(q)$ then
$W^u(\cO(p)) \cap W^s(\cO(q))$ is dense in $M$.
\end{otherthm}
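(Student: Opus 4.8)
The plan is a Baire-category argument: I would reduce the statement to a countable family of open--dense conditions and then establish the density using the conservative connecting lemma of \cite{BC}.

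\textbf{Countable reduction.} Fix a countable basis $\{U_n\}$ of nonempty open subsets of $M$ and a countable basis of balls. Call a pair $(B,k)$, with $B$ a basic ball and $k\ge 1$ an integer, \emph{admissible} for $g\in\Diff^1_m(M)$ if $g$ has a unique periodic point of period $k$ in $B$, that point is hyperbolic, and the same persists for all $C^1$-nearby maps; the set of such $g$ is open, and on it the continuation $p=p_{B,k}(g)$ is well defined and continuous in $g$. Given two admissible data $(B,k)$, $(B',k')$ and an index $n$, let $\cA$ be the open set of $g$ for which both data are admissible and $\dim W^u(\cO(p_{B,k}(g)))\ge\dim W^u(\cO(p_{B',k'}(g)))$, and let $\cD\subseteq\cA$ be the set of $g\in\cA$ admitting a transverse intersection of $W^u(\cO(p_{B,k}(g)))$ with $W^s(\cO(p_{B',k'}(g)))$ at some point of $U_n$. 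Transversality is a $C^1$-open condition and the continuations are continuous, so $\cD$ is open; granting that $\cD$ is dense in $\cA$, the set $\cD\cup(\Diff^1_m(M)\setminus\cl\cA)$ is open and dense. Intersecting these countably many sets with the residual set from Theorem~\ref{t.KS} yields the sought residual $\cR$: if $f\in\cR$ and $p$, $q$ are hyperbolic periodic points of $f$ with $\dim W^u(\cO(p))\ge\dim W^u(\cO(q))$, then choosing small enough basic balls around $p$ and $q$ and the appropriate periods puts $f$ into the corresponding $\cA$, hence into $\cD$ for every $n$; thus $W^u(\cO(p))\cap W^s(\cO(q))$ meets every $U_n$ and is dense.

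\textbf{Density of $\cD$ in $\cA$.} This is where the real work is and where I would invoke \cite{BC}. Fix $g\in\cA$ with continuations $p$, $q$ and a target set $U_n$. First note that volume preservation forbids sinks and sources, so $1\le\dim W^u(\cO(p)),\dim W^u(\cO(q))\le\dim M-1$ and all four manifolds $W^u(\cO(p))$, $W^s(\cO(p))$, $W^u(\cO(q))$, $W^s(\cO(q))$ are positive-dimensional. After a first arbitrarily $C^1$-small conservative perturbation (which keeps us in the open set $\cA$) I may assume $g$ is transitive, hence chain-transitive, since transitivity is $C^1$-generic among conservative diffeomorphisms by \cite{BC}. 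Pick a point $a$ accumulated by $W^u(\cO(p))\setminus\cO(p)$, a point $b$ accumulated by $W^s(\cO(q))\setminus\cO(q)$, and a point $c\in U_n$; chain-transitivity provides, for every $\delta>0$, a $\delta$-pseudo-orbit running from $a$ through $c$ to $b$. Applying the conservative connecting lemma of \cite{BC} to such a pseudo-orbit (with $\delta$ small and all perturbations $C^1$-small and supported away from $\cO(p)\cup\cO(q)$) produces $g_1$, with the same continuations, such that $W^u(\cO(p))$ meets $W^s(\cO(q))$ along an orbit that passes through $U_n$; in particular some iterate of the intersection point lies in $U_n$ and still belongs to $W^u(\cO(p))\cap W^s(\cO(q))$. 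Since $\dim W^u(\cO(p))+\dim W^s(\cO(q))=\dim W^u(\cO(p))+\dim M-\dim W^u(\cO(q))\ge\dim M$, one final $C^1$-small conservative perturbation supported in a small ball inside $U_n$ slides a local sheet of $W^u(\cO(p))$ across a local sheet of $W^s(\cO(q))$ to make the intersection transverse (this also absorbs, if necessary, the case where the connecting lemma only produced a non-transverse contact). This exhibits an element of $\cD$ arbitrarily $C^1$-close to $g$.

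\textbf{Main obstacle.} The bookkeeping of the countable reduction and the openness of $\cD$ are routine. The hard part is the density step, specifically the use of the conservative connecting lemma to route the unstable manifold of $p$ to the stable manifold of $q$ \emph{through the prescribed open set $U_n$} by $C^1$-small \emph{volume-preserving} perturbations --- chain-transitivity of the ambient manifold, itself a $C^1$-generic feature \cite{BC}, is what makes the required pseudo-orbits available --- followed by the conservative ``linking'' perturbation that upgrades a contact to a genuine transverse intersection using only the favorable dimension count $\dim W^u(\cO(p))+\dim W^s(\cO(q))\ge\dim M$. A recurring technical point is to keep every perturbation disjoint from $\cO(p)\cup\cO(q)$ and from the pieces of invariant manifold already produced, so that the periodic continuations and the earlier constructions survive and the successive perturbations compose legitimately.
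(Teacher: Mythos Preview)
Your approach is correct in spirit but takes a genuinely different route from the paper's. The paper does not prove Theorem~\ref{t.connecting} from scratch: it simply combines two black-box citations. First, Arnaud \cite{Ar_conn} (Proposition~18 and \S1.5) already shows that $C^1$-generically, whenever $\dim W^u(p)\ge\dim W^u(q)$, the set $W^u(\cO(p))\cap W^s(\cO(q))$ is dense in $\cl W^u(\cO(p))\cup\cl W^s(\cO(q))$. Second, Bonatti--Crovisier \cite{BC} (Theorem~1.3) shows that $C^1$-generically every homoclinic class equals $M$, which forces $\cl W^u(\cO(p))=M$. Putting the two together gives the statement in one line. Your argument instead re-derives everything by hand: a countable reduction to open--dense conditions, then a direct application of the conservative pseudo-orbit connecting lemma to route $W^u(\cO(p))$ through a prescribed $U_n$ and land on $W^s(\cO(q))$, followed by a local transversality perturbation using the dimension count. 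What you gain is a self-contained argument that does not rely on locating Arnaud's Proposition~18; what the paper gains is brevity and the reassurance that the delicate perturbation-theoretic bookkeeping (support control, preservation of earlier connections) has already been carried out carefully in the cited sources.

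One point in your write-up deserves more care: the claim that a \emph{single} application of the connecting lemma to the pseudo-orbit $a\to c\to b$ yields an orbit of $g_1$ that actually passes through $U_n$. The basic Hayashi-type connecting lemma only promises an orbit from near $a$ to near $b$; it does not promise the orbit shadows the intermediate point $c$. What makes your statement true is the stronger \emph{connecting lemma for pseudo-orbits} developed in \cite{BC}, which is precisely designed to route orbits through prescribed intermediate open sets (and is how \cite{BC} proves generic transitivity and that homoclinic classes fill $M$). You should either cite that version explicitly, or make the two-step structure visible: first connect $W^u(\cO(p))$ into $U_n$, then---with perturbation support disjoint from the finite orbit segment just created---connect that piece to $W^s(\cO(q))$. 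This is essentially what Arnaud does, so in the end your argument and the paper's citation converge on the same underlying mechanism.
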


Indeed, Arnaud shows that generically
if $p$ and $q$ are periodic points with $\dim W^u(p) \ge \dim W^u(q)$ then
$$
W^u(\cO(p)) \cap W^s(\cO(q))
\quad \text{is dense in} \quad
\cl{W^u(\cO(p))} \cup \cl{W^s(\cO(q))}
$$
(see \cite{Ar_conn}, Proposition~18 and \S 1.5).
The latter set generically is the whole manifold $M$.
More precisely,
Bonatti and Crovisier had shown that \emph{each} homoclinic class\footnote{The 
\emph{homoclinic class} of a hyperbolic periodic point $p$
is the closure of the set of points of transverse intersection between 
$W^u(\cO(p))$ and $W^s(\cO(p))$.}
equals $M$ (see \cite{BC}, Theorem~1.3 and its proof on page~79;
here we use the assumption that $M$ is connected).
Hence Theorem~\ref{t.connecting} holds.
(It is also shown in \cite{BC} that the generic $f$ is transitive, but we won't use this.)

\begin{otherthm}[\cite{BV}] \label{t.BV} 
For a generic $f$ in $\Diff_m^1(M)$ and for $m$-a.e.\ $x\in M$,
the Oseledets splitting along the orbit of $x$ is (trivial or) dominated.
\end{otherthm}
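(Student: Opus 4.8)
This is the Bochi--Viana theorem \cite{BV}, so strictly speaking nothing need be proved here; but let me indicate the strategy I would follow, which goes back to Ma\~n\'e's ICM sketch \cite{Mane_ICM}. The idea is to combine the upper semicontinuity of integrated Lyapunov exponents with a $C^1$ perturbation that mixes Oseledets directions wherever domination fails.

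First I would recall, exactly as in the proof of Theorem~\ref{t.CLS}, that
$L_i(f,m) = \int \big(\lambda_1(f,x) + \cdots + \lambda_i(f,x)\big)\, dm(x)$
equals $\inf_n \int \frac1n \log \|\wed^i Df^n\|\, dm$, an infimum of functions depending continuously on $f$. Hence each map $f \in \Diff_m^1(M) \mapsto L_i(f,m)$ is upper semicontinuous, its points of continuity form a residual set, and so does their intersection $\cR$ over $i = 1,\dots,\dim M - 1$. The claim is that every $f \in \cR$ has the stated property.

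Next, fix $f \in \cR$ and suppose, for contradiction, that there is an index $i$ and an $f$-invariant Borel set $\Gamma$ with $m(\Gamma)>0$ on which the Oseledets spectrum has a genuine gap at level $i$ (that is, $\lambda_i(f,\cdot) > \lambda_{i+1}(f,\cdot)$) but the splitting $E^{\le i} \oplus E^{> i}$ along orbits in $\Gamma$ is \emph{not} dominated. The analytic heart of the argument is a perturbation lemma: failure of domination on a positive-measure invariant set produces, for every $\eps>0$, a positive-measure family of orbit segments of bounded length along which the two Oseledets bundles are nearly parallel --- precisely the configuration in which a small, area-preserving linear perturbation can rotate vectors from the more expanded bundle into the less expanded one. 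Realizing such rotations along this family, glued via a Rokhlin/Kakutani tower so as to stay $C^1$-small and $m$-preserving, yields $g$ arbitrarily close to $f$ with $L_i(g,m) \le L_i(f,m) - \delta$ for some fixed $\delta>0$: bringing a pair of exponents together can only decrease the partial sum $\lambda_1 + \cdots + \lambda_i$, by the variational characterization of the exterior-power norm $\|\wed^i Df^n\|$. This contradicts $f \in \cR$. Hence, for $f \in \cR$ and $m$-a.e.\ $x$, every gap of the Oseledets spectrum at $x$ is dominated; a splitting all of whose gaps are dominated is a dominated splitting, and the only remaining possibility, $\ell(x)=1$, is the trivial one.

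The main obstacle is making the perturbation lemma precise in the conservative $C^1$ category: (a) extracting the bounded-length, small-angle orbit windows from the mere absence of domination --- here one runs an auxiliary dichotomy, namely that either the finite-time domination inequality holds with uniform constants on a large-measure set (which already gives a dominated splitting there) or such windows exist; (b) performing the vector rotations while exactly preserving $m$, which requires the area-preserving linear-algebra perturbation tools developed for the Ma\~n\'e--Bochi theorem; and (c) assembling the countably many local perturbations over a measurable tower without losing $C^1$-smallness. All of this is carried out in \cite{BV}; in the present paper one only invokes the statement.
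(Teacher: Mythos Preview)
The paper does not prove this statement at all: Theorem~\ref{t.BV} is simply quoted from \cite{BV} and used as a black box (see Corollary~\ref{c.BV} and the list in \S\ref{ss.c1 old stuff}). You correctly recognize this in your first sentence. Your subsequent sketch of the Ma\~n\'e--Bochi--Viana strategy --- upper semicontinuity of the integrated partial sums $L_i(\,\cdot\,,m)$, residual set of continuity points, and the perturbation lemma that drops $L_i$ wherever domination fails --- is an accurate outline of the argument in \cite{BV}, but it is \emph{extra} content relative to what the present paper does, which is nothing beyond citing the result.
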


\begin{corol}\label{c.BV}
For a generic $f$ in $\Diff_m^1(M)$, 
if   $G_i = \{ x \in M ; \; \lambda_i(f,x) > \lambda_{i+1}(f,x) \}$ has positive measure,
then there exist a nested sequence of measurable sets $\Lambda_1 \subset \Lambda_2 \subset \cdots \subset G_i$
such that ${m(G_i \setminus \Lambda_n)} \to 0$ as $n \to \infty$,
and each  each $\Lambda_n$ is $f$-invariant and has a dominated splitting of index $i$.
\end{corol}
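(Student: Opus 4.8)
The plan is to read Corollary~\ref{c.BV} off from Theorem~\ref{t.BV}, filtering $G_i$ according to how uniform the domination is along orbits. First I would fix $f$ in the residual set of Theorem~\ref{t.BV} and assume $m(G_i)>0$. Since $\lambda_i(f,\cdot)$ and $\lambda_{i+1}(f,\cdot)$ are measurable (Oseledets theorem), $G_i$ is Borel and $f$-invariant mod~$0$, and I replace it once and for all by an honestly $f$-invariant Borel representative. For $m$-almost every $x\in G_i$ the strict inequality $\lambda_i(f,x)>\lambda_{i+1}(f,x)$ forces the Oseledets splitting at $x$ to be nontrivial, so by Theorem~\ref{t.BV}---in the quantitative form recalled in the Introduction---there is an integer $n(x)\ge1$ such that the domination inequality (with ratio $2$) holds for every $k\in\Z$, every pair of Oseledets bundles $E^a(f^kx)$, $E^b(f^kx)$ with $a<b$, and every power $m\ge n(x)$. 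I then regroup the Oseledets subspaces at $x$ into the sum $E(x)$ of those whose exponent is $\ge\lambda_i(f,x)$---so $\dim E(x)=i$, precisely because $\lambda_i>\lambda_{i+1}$---and the sum $F(x)$ of those whose exponent is $\le\lambda_{i+1}(f,x)$. This yields a $Df$-invariant splitting $T_yM=E(y)\oplus F(y)$ for $y\in\cO(x)$, and since a regrouping of a dominated splitting is again dominated (one uses the uniform lower bound on the angles between the bundles of a dominated splitting; see \cite{BDV_book}), this splitting is $N(x)$-dominated of index $i$ along $\cO(x)$ for some $N(x)\ge n(x)$.

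Next I would set, for each integer $n\ge1$,
\begin{multline*}
\Lambda_n=\bigl\{\,x\in G_i \ :\ \|Df^m(f^kx)\cdot v\|>2\,\|Df^m(f^kx)\cdot w\| \\
\text{ for all } k\in\Z,\ m\ge n,\ \text{unit } v\in E(f^kx),\ w\in F(f^kx)\,\bigr\}.
\end{multline*}
The bundles $x\mapsto E(x)$ and $x\mapsto F(x)$ are measurable, and for fixed $k$ and $m$ the displayed condition amounts to the conorm of $Df^m(f^kx)|_{E(f^kx)}$ exceeding twice the norm of $Df^m(f^kx)|_{F(f^kx)}$---both measurable in $x$---so each $\Lambda_n$ is Borel. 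Because the condition quantifies $k$ over all of $\Z$, the set $\Lambda_n$ is $f$-invariant; evidently $\Lambda_1\subset\Lambda_2\subset\cdots\subset G_i$, and by construction $E\oplus F$ is an $n$-dominated splitting of index $i$ over $\Lambda_n$. Finally, by the first paragraph every $x$ in a full-measure subset of $G_i$ lies in $\Lambda_{N(x)}$, so $\bigcup_n\Lambda_n=G_i$ mod~$0$; since $\Lambda_n$ increases to this union, continuity of $m$ from below gives $m(G_i\setminus\Lambda_n)\to0$, which is what the statement requires.

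I do not anticipate a serious obstacle---the corollary is essentially bookkeeping on top of Theorem~\ref{t.BV}---but two routine points need care. The first is the Borel measurability of $\Lambda_n$, which I reduce above to measurability of the Oseledets bundles together with measurability of the relevant norms and conorms of the derivative cocycle. The second, and the one I would check most carefully, is the claim that the Oseledets splitting, regrouped at the single gap $\lambda_i>\lambda_{i+1}$, remains dominated with a constant uniform along the orbit (and, after passing to $\Lambda_n$, uniform over $\Lambda_n$): this is a standard consequence of the uniform lower bound on the angles between the bundles of a dominated splitting. If one wanted the splitting $E\oplus F$ over $\Lambda_n$ to be genuinely continuous rather than merely measurable and $f$-invariant, one could invoke the general fact that a uniformly dominated splitting over an invariant set extends continuously to its closure; this is not needed here.
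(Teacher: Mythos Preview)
Your proof is correct and follows the same route the paper takes: the paper does not give a separate argument for Corollary~\ref{c.BV} but simply notes that the residual set is that of Theorem~\ref{t.BV}, relying on the filtration of the Oseledets-dominated set by uniformity of the domination constant already described in the Introduction. Your write-up supplies the bookkeeping the paper omits (measurability, invariance, and the passage from domination of the full Oseledets splitting to domination of the index-$i$ regrouping), and your caveats about the two routine points are exactly the places where care is needed.
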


The residual set of Corollary~\ref{c.BV} is the same as in Theorem~\ref{t.BV}.
(In fact, it is the set of points of continuity of all $m$-integrated Lyapunov exponents, see \cite{BV}.)

%Hence, the proof of Theorem~\ref{t.CLS} shows that $\cR_\mathrm{CLS} \supset \cR \cap \cR_\mathrm{BV}$.)

%HAUSDORFF TOPOLOGY
%If 
%$X$ is any compact metric space, let $\cK(X)$ denote the set of compact subsets of $X$, endowed with the Hausdorff
%distance:
%$$
%d(K_1, K_2) = \inf \big \{ r>0 ; \; B_r(K_1) \supset K_2, \ B_r(K_2) \supset K_1 \big\}
%$$
%if $K_1$, $K_2\subset X$ are compact nonempty; $d(K_1, \emptyset) = \diam X$.
%Recall that $\cK(X)$ is itself compact
%(an in particular, a Baire space).

The following is the volume-preserving version of a result from \cite{ABC}
related to Ma\~{n}\'{e}'s Ergodic Closing Lemma:
%It says that generically any ergodic measure
%can be approximated by a measure supported on periodic orbit
%such that the Lyapunov exponents of the two measures are close:

\begin{otherthm}\label{t.ABC ECL}
For a generic $f$ in $\Diff_m^1(M)$, the following holds:
Given any $\mu \in \cM_\mathrm{erg}(f)$
there is a sequence of measures $\mu_n \in \cM_\mathrm{erg}(f)$, 
each supported on a periodic orbit,
such that:
\begin{itemize}
\item $\supp \mu_n$ converges to $\supp \mu$ in the Hausdorff topology; 

\item $\mu_n$ converges to $\mu$ in the weak-star topology;

\item the Lyapunov exponents of $f$ with respect to $\mu_n$
converge to the exponents with respect to $\mu$.
\end{itemize}
\end{otherthm}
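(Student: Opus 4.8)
This is the volume-preserving counterpart of a theorem of \cite{ABC}, so the plan is to follow their scheme in two stages: first a \emph{perturbative} statement --- a version of Ma\~n\'e's Ergodic Closing Lemma with control of Lyapunov exponents, valid for \emph{every} $f$ --- and then a Baire category argument that upgrades it to the residual set asserted here.

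For the perturbative stage I would fix $f \in \Diff_m^1(M)$, an ergodic measure $\mu$, and $\eps>0$. By the Oseledets and Kingman theorems, for $\mu$-almost every $x$ one has $\frac1n \log\|\wed^i Df^n(x)\| \to \lambda_1(f,\mu)+\cdots+\lambda_i(f,\mu)$ for every $i$; I would pick such an $x$ which in addition lies in $\supp\mu$ (possible since $\mu(\supp\mu)=1$), is Birkhoff-generic for $\mu$, and is a Ma\~n\'e ``well-closable'' point. The conservative Ergodic Closing Lemma --- Ma\~n\'e's argument goes through using volume-preserving perturbation tools, in the spirit of \cite{Ar_conn} --- then yields $g$ arbitrarily $C^1$-close to $f$ in $\Diff_m^1(M)$ with a periodic orbit $\cO(p_g)$ that $\eps$-shadows a long segment $x,fx,\dots,f^{T-1}x$. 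Since this segment lies in the invariant set $\supp\mu$, the orbit $\cO(p_g)$ lies in the $\eps$-neighborhood of $\supp\mu$, and, the segment being Birkhoff-generic, the empirical measure of $\cO(p_g)$ is weak-star $\eps$-close to $\mu$; both facts are intrinsic to Ma\~n\'e's construction. The remaining --- and crucial --- point is to force the Lyapunov exponents of $g$ along $\cO(p_g)$ to be $\eps$-close to those of $\mu$: by the choice of $x$ the product $Df^T(x)$ has finite-time exponents near $\lambda_i(f,\mu)$, and one adjusts $Dg$ along $\cO(p_g)$, by a volume-preserving Franks-type perturbation, so that the eigenvalues of the return map realize (approximately) these values --- this is exactly the technical heart of \cite{ABC}, whose conservative transcription is what must be supplied. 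An arbitrarily small further conservative perturbation makes $\cO(p_g)$ hyperbolic while keeping all of the above. Hausdorff convergence of the supports is then automatic: $\supp\mu$ lies in the $\eps$-neighborhood of $\cO(p_g)$ because weak-star closeness makes every ball meeting $\supp\mu$ carry $\cO(p_g)$-mass, and the reverse inclusion was arranged above.

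For the Baire stage I would encode the ``data'' of a hyperbolic periodic orbit as the point $\big(\text{empirical measure},\ \text{support},\ \text{exponent vector}\big)$ of the separable metrizable space $\cM(M)\times \mathcal{K}(M)\times\R^d$, where $\mathcal{K}(M)$ is the space of compact subsets of $M$ with the Hausdorff metric and $d=\dim M$. Fix a countable basis $\{\cO_k\}$ of this space and let $P_k\subset\Diff_m^1(M)$ be the set of $f$ possessing a hyperbolic periodic orbit with data in $\cO_k$. Since hyperbolic periodic orbits have continuations with continuously varying data, each $P_k$ is open, hence $\cG_k := P_k \cup \big(\Diff_m^1(M)\setminus \cl P_k\big)$ is open and dense, and $\cG := \bigcap_k \cG_k$ is residual. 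I would then intersect $\cG$ with the residual set of diffeomorphisms for which the perturbative stage applies and claim the result works: if $f$ is in it and $\mu\in\cM_\mathrm{erg}(f)$ has data $p_\mu$, then for each basic $\cO_k\ni p_\mu$ the perturbative stage provides maps $g$ arbitrarily $C^1$-close to $f$ with a hyperbolic periodic orbit whose data lies in $\cO_k$, so $f\in\cl P_k$, and since $f\in\cG_k$ this forces $f\in P_k$. Letting the $\cO_k$ shrink to $p_\mu$ produces a sequence $\mu_n$ of periodic measures of $f$ with the three required convergences.

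The hard part will be the conservative exponent control in the perturbative stage. In $\Diff_m^1(M)$ one cannot modify $Df$ freely along an orbit, so closing the orbit and prescribing the eigenvalues of the return map must be carried out simultaneously while preserving volume; this calls for the volume-preserving analogues of the perturbation lemmas of \cite{ABC} (a conservative Franks' lemma, perturbations of conservative linear cocycles, and the decomposition of such cocycles into ``rotation'' and ``homothety'' parts). Granting these, the remaining steps are routine adaptations of \cite{ABC} together with the Baire category scheme above.
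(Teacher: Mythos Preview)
Your proposal is correct and follows essentially the same approach as the paper: both reduce to the dissipative argument of \cite{ABC} (perturbative ergodic closing with exponent control, followed by a Baire category upgrade), noting that the conservative ingredients are available. The paper's proof is a one-line reference to \cite{ABC}, invoking the conservative Kupka--Smale theorem (Theorem~\ref{t.KS}) and the volume-preserving Ergodic Closing Lemma; you have simply unpacked the structure of that argument. Two small remarks: the correct reference for the conservative closing lemma is \cite{Ar_ECL} rather than \cite{Ar_conn}, and in your Baire stage the phrase ``intersect $\cG$ with the residual set for which the perturbative stage applies'' is superfluous, since the perturbative statement holds for every $f$.
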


\begin{proof}
The same statement for the dissipative case is Theorem~4.1 from~\cite{ABC},
and their proof applies to our volume-preserving situation, 
using Kupka--Smale Theorem~\ref{t.KS} and the (easier) volume-preserving version \cite{Ar_ECL}
of Ma\~{n}\'{e}'s Ergodic Closing Lemma \cite{Mane}.
\end{proof}

We will need an extension of the result above 
that deals with non-ergodic measures:

\begin{otherthm}\label{t.ECL plus}
For a generic $f$ in $\Diff_m^1(M)$, the following holds:
Given any $\mu \in \cM(f)$
there is a sequence of measures $\mu_n \in \cM(f)$, each with finite support,
such that:
\begin{itemize}
\item $\supp \mu_n$ converges to $\supp \mu$ in the Hausdorff topology; 
%(this means that if $n$ is large then every point in $\supp \mu$ is close to a point in $\supp \mu_n$ and vice~versa).

\item letting $\cL = \cL_f : M \to M \times \R^d$ be given by
\begin{equation}\label{e.trick}
\cL(x) = \big(x, \lambda_1(f,x), \ldots, \lambda_d(f,x) \big) , 
\end{equation}
then the sequence of measures $\cL_* \mu_n$ converges to $\cL_* \mu$
in the weak-star topology
(and in particular $\mu_n \to \mu$ as well).
\end{itemize}
\end{otherthm}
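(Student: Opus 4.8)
The plan is to derive the non-ergodic statement from the ergodic one (Theorem~\ref{t.ABC ECL}) by integrating over the ergodic decomposition, using the measure-valued integration machinery of Section~\ref{s.prelim}; the residual set will be exactly the one furnished by Theorem~\ref{t.ABC ECL}. A few preliminary remarks. Since $\|Df^{\pm 1}\|$ is bounded, all Lyapunov exponents $\lambda_i(f,\cdot)$ are bounded by a constant depending only on $f$, so $\cL=\cL_f$ takes values in $M\times K$ for a fixed compact $K\subset\R^d$; moreover $\cL$ is a Borel map defined on the Borel set $\cO(f)\subset M$ of Oseledets-regular points, which has full measure for \emph{every} $f$-invariant probability (so $\cL_*\mu$ makes sense even when $\mu$ is singular with respect to $m$). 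For an ergodic $\nu$ the functions $\lambda_i(f,\cdot)$ equal the constants $\lambda_i(f,\nu)$ $\nu$-almost everywhere, and for a periodic-orbit measure $p$ the analogous statement holds with the exponents of the orbit; in both cases $\cL_*\nu$ is the push-forward of $\nu$ under the map $x\mapsto\bigl(x,\lambda_1(f,\nu),\ldots,\lambda_d(f,\nu)\bigr)$. In particular, if periodic-orbit measures $p^{(k)}$ converge to an ergodic $\nu$ weak-star \emph{and} their exponents converge to those of $\nu$ --- which is precisely what Theorem~\ref{t.ABC ECL} provides --- then $\cL_*p^{(k)}\to\cL_*\nu$, since push-forward is jointly continuous under uniform convergence of the maps and weak-star convergence of the measures.

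The core step is the identity $\cL_*\mu=\int\cL_*\nu\,d\bkappa(\nu)$, where $\bkappa=\bkappa_{f,\mu}$. Indeed, the identity map $\nu\mapsto\nu$ on $\cM_\mathrm{erg}(f)$ has integral $\int\nu\,d\bkappa(\nu)=\mu$ by definition of the ergodic decomposition, and the map $\nu\mapsto\cL_*\nu$ satisfies the measurability condition~\eqref{e.measurability}: for a Borel set $B\subset M\times K$ one has $(\cL_*\nu)(B)=\nu(\cL^{-1}(B))$ with $\cL^{-1}(B)$ a \emph{fixed} Borel subset of $M$, and $\nu\mapsto\nu(\cL^{-1}(B))$ is measurable by Lemma~\ref{l.measurable id}. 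Hence the push-forward property of the integral applies and yields the identity. Since $M\times K$ is compact, Lemma~\ref{l.combination} now produces, for each $\eps>0$, ergodic components $\nu_1,\ldots,\nu_k$ of $\mu$ and positive weights $c_i$ with $\sum c_i=1$ such that $\sum_i c_i\,\cL_*\nu_i$ is $\eps$-close (in a fixed metric for the weak-star topology) to $\cL_*\mu$. Moreover we may insist that every $\supp\nu_i\subset\supp\mu$, because the set of $\nu$ with this property has full $\bkappa$-measure (if $U=M\setminus\supp\mu$ then $\int\nu(U)\,d\bkappa(\nu)=\mu(U)=0$).

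This alone gives weak-star control of $\cL_*\mu$ but no control on supports, so I add a small-weight ``covering'' term. Choose $x_1,\ldots,x_N\in\supp\mu$ with $\supp\mu\subset\bigcup_j B(x_j,\eps)$; each $B(x_j,\eps)$ has positive $\mu$-measure, so $\bkappa$ gives positive mass to $\{\nu:\nu(B(x_j,\eps))>0\}$, and we may pick an ergodic component $\nu_j'$ of $\mu$ with $\supp\nu_j'\subset\supp\mu$ meeting $B(x_j,\eps)$. Set $\bar\mu_\eps=(1-\eps)\sum_i c_i\nu_i+\tfrac{\eps}{N}\sum_j\nu_j'$. Since push-forward is linear and $\cM(M\times K)$ is compact, $\cL_*\bar\mu_\eps$ is $O(\eps)$-close to $\cL_*\mu$; and $\supp\bar\mu_\eps=\bigcup_i\supp\nu_i\cup\bigcup_j\supp\nu_j'$ is contained in $\supp\mu$ and $2\eps$-dense in it, so its Hausdorff distance to $\supp\mu$ is less than $2\eps$.

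Finally, replace each ergodic component $\nu$ occurring in $\bar\mu_\eps$ by a periodic-orbit measure $p$ supplied by Theorem~\ref{t.ABC ECL}, chosen fine enough that $\cL_*p$ is close to $\cL_*\nu$ (by the continuity remark of the first paragraph) and $\supp p$ is Hausdorff-close to $\supp\nu$; keeping the same weights yields a finite convex combination $\mu_\eps$ of periodic-orbit measures --- hence $f$-invariant with finite support --- whose $\cL$-push-forward and support are, for a sufficiently fine choice, within $\eps$ of those of $\bar\mu_\eps$. Letting $\eps=1/n\to0$ and running a diagonal argument over the finitely many components present at each level produces the desired sequence $\mu_n$: $\cL_*\mu_n\to\cL_*\mu$ weak-star (whence $\mu_n\to\mu$ as well, by pushing forward under the projection $M\times\R^d\to M$) and $\supp\mu_n\to\supp\mu$ in the Hausdorff topology. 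I expect the only genuinely delicate points to be the verification of the measurability hypothesis for $\nu\mapsto\cL_*\nu$ (so that the measure-valued integral applies to the discontinuous map $\cL$) and the bookkeeping in the covering step, since Lemma~\ref{l.combination} by itself gives no control whatsoever on supports; the rest is soft.
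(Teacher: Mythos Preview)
Your proof is correct and follows essentially the same strategy as the paper: write $\cL_*\mu=\int\cL_*\nu\,d\bkappa_{f,\mu}(\nu)$, approximate by a finite convex combination of ergodic pieces via Lemma~\ref{l.combination} (with supports inside $\supp\mu$), and then replace each ergodic piece by a periodic-orbit measure using Theorem~\ref{t.ABC ECL}. Your explicit ``covering'' term to force Hausdorff convergence of supports is a harmless addition but not actually needed: once $\supp\nu_i\subset\supp\mu$ and $\sum c_i\nu_i\to\mu$ weak-star, one has $\supp\bigl(\sum c_i\nu_i\bigr)\to\supp\mu$ in Hausdorff automatically (any $x\in\supp\mu$ has $\mu(B(x,\eps))>0$, hence $\liminf$ of the approximants on that ball is positive), which is what the paper uses without spelling it out.
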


\begin{proof}
Let $f$ be generic in the sense of Theorem~\ref{t.ABC ECL}.
Thus the conclusion holds for ergodic measures,
and we will show that it also holds for any $\mu \in \cM(f)$.

Since $\bkappa_{f,\mu}$ is a decomposition of $\mu$,  
we have
$$
\cL_* \mu = \int \cL_* \nu \, d\bkappa_{f,\mu}(\nu) \, .
$$
We apply Lemma~\ref{l.combination} to approximate this integral by a finite convex combination.
Thus we find ergodic measures $\nu_1$, \ldots, $\nu_k$ with supports contained in $\supp \mu$ 
and positive numbers $c_1$, \ldots, $c_k$ with $\sum c_i = 1$
such that $\sum c_i \cL_* \nu_i$ is weak-star-close to $\cL_* \mu$.
By Theorem~\ref{t.ABC ECL}, for each $\nu_i$ we 
take $\tilde \nu_i \in \cM(f)$ supported on a finite set Hausdorff-close
to $\supp \nu_i$ such that $\cL_* \tilde \nu_i$ is weak-star-close to $\cL_* \nu_i$.
Thus the measure $\tilde \mu = \sum c_i \tilde \nu_i$
is supported on a finite set Hausdorff-close to $\supp \mu$,
and is such that $\cL_* \tilde \mu$ is weak-star-close to $\cL_* \mu$,
as desired. 
\end{proof}

\subsection{\texorpdfstring{$C^1$}{C1} Dominated Pesin Theory} \label{ss.dom pesin} % Complication to avoid problems with hyperref

We will need the fact that 
domination plus nonuniform hyperbolicity guarantees the existence of unstable and stable manifolds.
This has been claimed long ago by Ma\~n\'e~\cite{Mane_ICM} and 
recently made precise by Abdenur, Bonatti, and Crovisier \cite[\S 8]{ABC}.
However, their result does not fit directly to our needs,
and thus we take an independent approach.
More precisely, we first give a sufficient condition~\eqref{e.condition}
for the existence of a large stable manifold (Theorem~\ref{t.manifold}) at a given point,
and then we estimate the measure of the set of points that satisfy this condition, 
based on information about the Lyapunov exponents (Lemma~\ref{l.block}).

\subsubsection{Existence of Invariant Manifolds}\label{sss.manifolds}

Fixed $f \in \Diff^1(M)$, the (Pesin) stable set at a point $x\in M$ is
$$
W^s(x) = \left\{ y \in M; \; \limsup_{n \to +\infty} \frac{1}{n} \log d(f^n y, f^n x) < 0 \right\}
$$
and analogously for the unstable set.

From now on, fix $f \in \Diff^1(M)$.
Assume $\Lambda \subset M$ is an $f$-invariant Borel set with a dominated splitting
$T_\Lambda M = E^{cu}\oplus E^{cs}$.

For each $\ell \in \Z_+$, let 
$\block^s(\ell,f)$ be the set of points
$x \in \Lambda$ such that:
\begin{equation}\label{e.condition}
\frac{1}{n} \sum_{j=0}^{n-1} \log \|Df^\ell (f^{\ell j} x) | E^{cs} \| < -1
\quad \text{for every $n \in \Z_+$.}
\end{equation}
Also define $\block^u(\ell,f)$ as $\block^s(\ell, f^{-1})$, that is, the set of $x \in \Lambda$
such that $\frac{1}{n} \sum_{j=0}^{n-1} \log \|Df^{-\ell} (f^{-\ell j} x) | E^{cu} \| < -1$
for every $n \in \Z_+$.
The sets $\block^s(\ell,f)$ and $\block^u(\ell,f)$ are called 
\emph{unstable and stable Pesin blocks}.
We also denote $\block(\ell,f) = \block^s(\ell,f) \cap \block^u(\ell,f)$,
and call this set a \emph{Pesin block}.\footnote{Although this definition does not 
coincide with the usual one  in Pesin Theory, as e.g.\ \cite[\S 2.2.2]{BP_book},
we believe there is no risk of confusion.}

%***Obs: $\|Df^{\ell n}(x)|E^{cs}\| < e^{-n}$ nao eh suficiente para ter o teorema***

We fix cone fields $\cC^{cu}$, $\cC^{cs}$ 
around $E^{cu}$, $E^{cs}$ that are strictly invariant.
More precisely, for each $y\in \Lambda$ the open cone $\cC^{cu}_x \subset T_x M$
contains $E^{cu}_x$,  is transverse to $E^{cs}_x$, and the closure of its image by $Df(x)$ is contained in $E^{cs}_{fx}$;
analogously for $\cC^{cs}$. 
These cones can be extended to a small open neighborhood $V$ of $\cl \Lambda$,
so that strict invariance still holds for all points in $V$ that are mapped inside $V$.
%If necessary, we can suppose that $\Lambda$ is the maximal invariant set on $V$.
If $g$ is sufficiently $C^1$-close to $f$ then the cone fields remain strictly invariant and
there is a dominated splitting over the maximal $g$-invariant set in $V$.

Let $x \in \Lambda$, $r>0$ be small,
and $\phi$ be a $C^1$ map from the ball of radius $r$ around $0$ in $E^{cs}(x)$ to $E^{cu}(x)$.
%with $D\phi(0) = 0$.
Let $D$ be the graph of the map $v \mapsto \exp_x(v + \phi(v))$.
If in addition the tangent space of $D$ at each point is contained in $\cC^{cs}$
and equals $E^{cs}(x)$ at $x$
then we say that $D$ is a \emph{center-stable disk} of radius $r$ around $x$.

\begin{otherthm}[Stable Manifold]\label{t.manifold} 
Consider an $f$-invariant set $\Lambda$ with a dominated splitting. 
%and fields of cones on a neighborhood, as explained above.	
For each $\ell \in \Z_+$ there exists $r>0$
such that if $x \in \block^s(\ell,f)$  
then $W^s(x)$ contains a center stable disk of radius $r$ around $x$.
Moreover, the same $r$ works for every diffeomorphism sufficiently
(depending on $\ell$) $C^1$-close to $f$.
\end{otherthm}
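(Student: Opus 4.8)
The plan is to build the disk by a Hadamard graph transform along the forward orbit of $x$: the dominated splitting provides forward/backward invariant cone fields, the $C^1$ hypothesis makes the nonlinear part of $f$ small at small scales, and condition~\eqref{e.condition} upgrades the ``on average'' contraction of $E^{cs}$ to an honest exponential one, so that nothing beyond $C^1$ regularity (and no bunching) is needed. First I would reduce to $\ell=1$ by passing to $f^\ell$: the splitting $E^{cu}\oplus E^{cs}$ stays dominated for $f^\ell$, $\block^s(\ell,f)$ is exactly the block of $f^\ell$ for the exponent $-1$, a center-stable disk depends only on the splitting, $W^s_{f^\ell}(x)\subseteq W^s_f(x)$, and $g\mapsto g^\ell$ maps a $C^1$-neighbourhood of $f$ into one of $f^\ell$. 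So take $\ell=1$; then \eqref{e.condition} reads $\sum_{j=0}^{n-1}a_j<-n$ for all $n$, with $a_j=\log\|Df(f^jx)\,|\,E^{cs}\|$.

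Working in exponential charts at $x_n=f^nx$, write $f$ locally as $A_n+g_n$, where $A_n=Df(x_n)$ preserves the splitting and satisfies $\|A_n|E^{cs}\|\cdot\|(A_n|E^{cu})^{-1}\|\le\tau<1$ by domination, while the nonlinear part has $\mathrm{Lip}\,g_n\le\epsilon$ on the $r$-ball, with $\epsilon=\epsilon(r)\to0$ as $r\to0$ (uniform continuity of $Df$ on the compact $M$ — this is the only place $C^1$ enters). Let $L$ bound $\|Df^{\pm1}\|$ globally; then $\|A_n|E^{cs}\|\ge L^{-1}$, so $\|A_n|E^{cs}\|+2\epsilon\le\|A_n|E^{cs}\|(1+2L\epsilon)$. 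Set $\rho_n=r\prod_{j<n}(\|A_j|E^{cs}\|+2\epsilon)$; this inequality together with $\sum_{j<n}a_j<-n$ gives $\rho_n\le r\,e^{-n(1-2L\epsilon)}\le r\,e^{-n/2}$ once $\epsilon$ is small, so the charts of radius $\rho_n$ always stay within the domain of validity — and this bounded, exponentially decaying behaviour of the radii is exactly what condition~\eqref{e.condition} buys us.

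I would then run the graph transform on sequences $(\psi_n)_{n\ge0}$ of sections $\psi_n : B_{\rho_n}(0)\cap E^{cs}(x_n)\to E^{cu}(x_n)$ with $\psi_n(0)=0$ and $\mathrm{Lip}\,\psi_n\le1$, letting $\psi_n'$ be the section whose graph over $B_{\rho_n}$ is the $f^{-1}$-image of the graph of $\psi_{n+1}$ over $B_{\rho_{n+1}}$. One checks that $\psi_n'$ is well-defined by a fibrewise fixed-point argument (contraction factor $\le 2L\epsilon<1$), that $\psi_n'(0)=0$ and $\mathrm{Lip}\,\psi_n'\le1$ (strict $Df^{-1}$-invariance of the $E^{cs}$-cone, i.e.\ domination), that $f$ maps the graph of $\psi_n'$ over $B_{\rho_n}$ into the graph of $\psi_{n+1}$ over $B_{\rho_{n+1}}$ (the point of defining $\rho_{n+1}$ as above), and — the key estimate — that in the scale-invariant metric $\sup_n\|\psi_n-\tilde\psi_n\|_{C^0(B_{\rho_n})}/\rho_n$ the transform contracts by a factor that domination bounds by $\|(A_n|E^{cu})^{-1}\|\cdot\|A_n|E^{cs}\|+C\epsilon\le\tau+C\epsilon<1$, uniformly in $n$. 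The unique fixed point $(\psi_n^*)$ yields $D:=\mathrm{graph}(\psi_0^*)$ over $B_r(0)\cap E^{cs}(x)$: it is tangent to $\cC^{cs}$ everywhere, $T_xD=E^{cs}(x)$ because the forward iterates of $T_xD$ remain in $\cC^{cs}$ and $E^{cs}(y)=\bigcap_k Df^{-k}(\cC^{cs}(f^ky))$, and $D$ is $C^1$ by the usual fibre-contraction argument applied to the transform on $C^1$ sections — so $D$ is a center-stable disk of radius $r$ around $x$. Finally $D\subset W^s(x)$, since for $y\in D$ the iterate $f^n(y)$ lies in the graph of $\psi_n^*$ over a point of $B_{\rho_n}$, whence $d(f^ny,f^nx)\le 2\rho_n\le 2r\,e^{-n/2}$.

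For the last sentence of the statement, observe that $L$, $\tau$, the strict invariance of the cone fields $\cC^{cu},\cC^{cs}$ on the fixed neighbourhood $V\supset\cl\Lambda$, and the dependence $\epsilon=\epsilon(r)$ are all uniform over a $C^1$-neighbourhood of $f$ (for the cones and $\tau$ this is the robustness already noted in the setup), while $\block^s(\ell,g)$ is cut out by the same Birkhoff inequality with $Dg^\ell$ and $E^{cs}_g$ in place of $Df^\ell$ and $E^{cs}$. Hence, after shrinking $r$ once at the outset, the whole construction applies with the same $r$ to every $g$ sufficiently $C^1$-close to $f$. I expect the main obstacle to be precisely the bookkeeping of the middle two paragraphs: choosing the variable radii $\rho_n$ and the scale-invariant metric so that domination \emph{alone} makes the transform a uniform contraction, and verifying that condition~\eqref{e.condition} — all partial Birkhoff sums strictly below $-n$, not merely the asymptotic average — keeps the chart radii bounded and forces them to $0$ exponentially; a secondary technical point is the $C^1$-regularity of the limit section.
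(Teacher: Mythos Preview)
Your proposal is correct and complete in outline; the approach, however, is genuinely different from the paper's. You run a Hadamard graph transform on an infinite sequence of sections, exhibit it as a contraction in a scale-invariant metric (with contraction factor $\approx \|(A_n|E^{cu})^{-1}\|\cdot\|A_n|E^{cs}\|\le\tau<1$ coming purely from domination), and take the unique fixed point; the Pesin-block inequality~\eqref{e.condition} enters only to keep the variable radii $\rho_n$ exponentially small. The paper instead uses a Perron-type limit: it starts at time $n$ with the \emph{flat} affine disk tangent to $E^{cs}(f^{n\ell}x)$ inside a ball of radius $2r\,e^{c_n}e^{n/2}$, pulls it back $n$ times through nested balls whose radii are governed by the same Birkhoff sums, checks that the pulled-back disk $D_n^0$ always reaches the boundary of the initial ball, and then passes to a limit as $n\to\infty$. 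The $C^1$ regularity of the limit is obtained not by fibre contraction but by a direct uniform-equicontinuity argument for the tangent planes of $D_n^0$, which again uses only domination (backward iterates in the $cs$-cone converge exponentially). Your route is more systematic and makes the $C^1$ regularity and the $C^1$-robustness in $f$ essentially automatic once the Banach-space framework is in place; the paper's route is shorter and avoids the bookkeeping of sequence spaces, at the cost of the ad hoc equicontinuity step. Your reduction to $\ell=1$ via $f\mapsto f^\ell$ is a nice simplification the paper does not make.
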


This result can be deduced from the Plaque Family Theorem
from \cite{HPS} (see also \cite{ABC}).
We prefer, however, to give a direct proof:

\begin{proof}
We work on exponential charts.  
Fix $\ell$ and take $x \in \block^s(\ell,f)$.
%For simplicity of writing, let $g = f^\ell$
Let $c_n=\sum_{k=0}^{n-1} \log \|Df^\ell | E^{cs}(f^{k\ell}(x))\|$,
and let $B_n$ be the ball of radius $2 r e^{c_n} e^{n/2}$ around
$f^{n \ell}(x)$.  Let $D^n_n$ be the intersection of $B_n$ with
the affine space through $f^{n \ell}(x)$ tangent to
$E^{cs}(f^{n \ell}(x))$.  Define
$D^k_n$ for $k=n-1,\ldots,0$ by setting $D^k_n$ as the intersection of $B_k$
with $f^{-\ell}(D^{k+1}_n)$.  Notice that if $r$
is small then each $D^k_n$ will be tangent to the cone field, and in fact its
tangent space will be close to $E^{cs}(f^{k \ell}(x))$.  By the definition
of $\block^s(\ell,f)$, we see that
$\partial D^k_n \subset \partial B_k$ for each $0 \leq k \leq n$.  We claim
that the tangent space to $D^0_n$ is uniformly equicontinuous: for every
$\epsilon>0$ there exists $\delta>0$ such that
$d(T_y D^0_n,T_{y'} D^0_n)<\epsilon$ whenever $y$, $y' \in D^0_n$ are at
distance at most $\delta$.  Thus any accumulation  point of $D^0_n$ is 
a center stable disk $D^0$ of radius at least $r$
which is clearly contained in $W^s(x)$.

To see the claim, observe that domination implies that 
there are constants $C$, $\gamma>0$
such that for every $k$, if $y$, $y' \in D^0_n$ are sufficiently close (depending on $k$)
and 
$F$, $F'$ are subspaces tangent to $\cC^{cs} (f^{k\ell}(y))$ and  $\cC^{cs} (f^{k\ell}(y'))$ respectively,
then 
$$
d \big( (Df^{k\ell}(y))^{-1}(F), (Df^{k \ell}(y'))^{-1}(F') \big) < C e^{-\gamma k} \, .
$$ 
It follows that for each $0 \leq k \leq n$,
$$
d \big( T_y D^0_n,Df^{k \ell}(y)^{-1}(E^{cs}(f^{k\ell}(x))) \big)< C e^{-\gamma k} \, .
$$
Thus for every $\epsilon>0$, if $k \geq 0$ is minimal with 
$C e^{-\gamma k}<\epsilon/3$, and $d(y,y')$ is sufficiently small (depending on $k$),
then
$$
d(T_y D^0_n,T_{y'} D^0_n) < \frac{2\epsilon}{3} + 
d(Df^{k \ell}(y)^{-1}(E^{cs}(x)), Df^{k \ell}(y')^{-1}(E^{cs}(x)))<\epsilon,
$$
as claimed.
\end{proof}

\subsubsection{The Size of the Pesin Blocks}

In order to extract useful consequences from Theorem~\ref{t.manifold},
we need to estimate the measure of the Pesin blocks.
We will show that if $\lambda^{cs} = \lim_{\ell \to +\infty} \frac{1}{\ell} \log \|Df^{\ell} | E^{cs} \|$ is 
negative on most of $\Lambda$
then $\block^s(\ell,f)$ covers most of $\Lambda$, provided $\ell$ is large enough.
This follows from the next lemma,
which works for any $f$-invariant measure.
The lemma is also suitable to study the variation of the Pesin block with the diffeomorphism.

\begin{lemma} \label{l.block}
Let $\mu \in \cM(f)$.
Assume that $\eta>0$, $\alpha>0$, and $\ell \in \Z_+$ satisfy the following conditions:
\begin{gather}
\mu \{ x \in \Lambda ; \; \lambda^{cs}(x) > -\alpha \} < \eta ,  \label{e.c1}\\
\ell > \frac{1}{\alpha \eta} \, , \label{e.c2} \\
\int_{\Lambda} \left| \frac{1}{\ell} \log \|Df^{\ell} | E^{cs} \| - \lambda^{cs} \right| \, d\mu < \alpha \eta  \label{e.c3}.
\end{gather}
Then
$$
\mu \big( \Lambda \setminus \block^s(\ell,f) \big) < 3 \eta.
$$
\end{lemma}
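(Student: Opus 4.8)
Here is how I would approach Lemma~\ref{l.block}.

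The plan is to set $F = f^\ell$ and work with the additive cocycle $G(x) = \log\|Df^\ell(x)\,|\,E^{cs}\|$, so that by definition $x\in\block^s(\ell,f)$ exactly when $x\in\Lambda$ and $\frac1n\sum_{j=0}^{n-1}G(F^jx) < -1$ for every $n\in\Z_+$. By Kingman's subadditive ergodic theorem the function $\lambda^{cs}$ is defined $\mu$-a.e.\ and is $f$-invariant, hence $F$-invariant. I would then isolate the ``error term'' $R = G - \ell\,\lambda^{cs}$ on $\Lambda$ (extended by $0$ elsewhere): it is bounded (so in $L^1(\mu)$) and, by hypothesis \eqref{e.c3}, $\int|R|\,d\mu = \ell\int_\Lambda\big|\frac1\ell\log\|Df^\ell|E^{cs}\| - \lambda^{cs}\big|\,d\mu < \ell\alpha\eta$. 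Since $\lambda^{cs}\circ F^j = \lambda^{cs}$ $\mu$-a.e., the Birkhoff averages of $G$ split as $\frac1n\sum_{j=0}^{n-1}G(F^jx) = \ell\,\lambda^{cs}(x) + \frac1n\sum_{j=0}^{n-1}R(F^jx)$, so that, modulo a $\mu$-null set,
\[
\Lambda\setminus\block^s(\ell,f) = \Big\{x\in\Lambda ;\; \tfrac1n\textstyle\sum_{j=0}^{n-1}R(F^jx) \ge -1 - \ell\,\lambda^{cs}(x) \text{ for some } n\in\Z_+\Big\}.
\]

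Next I would split this set according to whether $\lambda^{cs} > -\alpha$ or $\lambda^{cs}\le-\alpha$. The first part has $\mu$-measure $<\eta$ by \eqref{e.c1}, and nothing further is needed. On $\{\lambda^{cs}\le-\alpha\}$ one has $-1-\ell\,\lambda^{cs}(x)\ge\ell\alpha-1=:T$, and \eqref{e.c2} gives $T > \eta^{-1}-1 > 0$ (we may assume $\eta<1/2$, the conclusion being trivial otherwise since $3\eta \ge 3/2 > 1 \ge \mu(\Lambda\setminus\block^s(\ell,f))$); hence every point of the second part belongs to $\{x;\; \sup_{n\ge1}\frac1n\sum_{j=0}^{n-1}R(F^jx)\ge T\}$. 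Here I would invoke the maximal ergodic inequality — a direct consequence of Hopf's maximal ergodic theorem applied to $R-c$ — namely $\mu\{x;\; \sup_{n\ge1}\frac1n\sum_{j=0}^{n-1}R(F^jx) > c\}\le\frac1c\int R^+\,d\mu\le\frac{\ell\alpha\eta}{c}$ for every $c>0$. Letting $c\uparrow T$, the measure of the second part is at most $\ell\alpha\eta/T = \ell\alpha\eta/(\ell\alpha-1) < \eta/(1-\eta)$, the last step using $\ell\alpha > \eta^{-1}$ (which is \eqref{e.c2} again). Summing the two parts, $\mu(\Lambda\setminus\block^s(\ell,f)) < \eta + \eta/(1-\eta) < 3\eta$ since $\eta<1/2$.

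The hard part is the passage from information about the \emph{limiting} Birkhoff average of $G$ — which \eqref{e.c1} and \eqref{e.c3} do control, because $R$ is $L^1$-small and $\lambda^{cs}$ is very negative on the bulk of $\Lambda$ — to the much stronger statement that \emph{no} partial average of $G$ ever rises as high as $-1$; this gap is bridged precisely by the maximal inequality. It is also here that the hypothesis $\ell > (\alpha\eta)^{-1}$ is used in an essential way: it forces the required excursion threshold $T = \ell\alpha - 1$ of the average of $R$ to be far larger than $\|R\|_{L^1} < \ell\alpha\eta$, so that the maximal inequality only costs $O(\eta)$ in measure. Everything else — existence and $f$-invariance of $\lambda^{cs}$, the cocycle splitting identity, boundedness (hence integrability) of $R$, measurability of the sets involved, and the trivial regime $\eta\ge1/2$ — should be routine bookkeeping.
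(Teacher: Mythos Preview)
Your proof is correct and rests on the same core tool as the paper's, namely Hopf's Maximal Ergodic Theorem combined with the split $\{\lambda^{cs}>-\alpha\}\cup\{\lambda^{cs}\le-\alpha\}$. The packaging differs slightly: the paper applies the maximal ergodic theorem directly to $\phi+1$ (in your notation, $G+1$) on the invariant set $\{\lambda^{cs}\le-\alpha\}$, obtaining $\int_{\{\phi^*\ge-1\}\cap\{\lambda^{cs}\le-\alpha\}}(\phi+1)\,d\mu\ge0$, and then uses \eqref{e.c2} to replace $1$ by $\ell\alpha\eta$ and \eqref{e.c3} to replace $\phi/\ell$ by $\lambda^{cs}$, arriving at the bound $2\eta$ for that piece. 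You instead subtract off the invariant part $\ell\lambda^{cs}$ first and apply the weak-type maximal inequality $\mu\{MR>c\}\le c^{-1}\|R^+\|_1$ to the remainder $R=G-\ell\lambda^{cs}$. Your route makes the mechanism more transparent---the threshold $T=\ell\alpha-1$ is forced to be large compared with $\|R\|_1<\ell\alpha\eta$ precisely by \eqref{e.c2}---at the cost of a slightly weaker constant ($\eta/(1-\eta)$ instead of $2\eta$ for the second piece, with the case $\eta\ge1/2$ handled separately). Either way the argument is essentially the same.
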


%Applying the lemma to $f^{-1}$ we obtain an analogous estimate for $\block^u(\ell,f)$.

To see how the lemma can be applied, assume, for example, that
$\lambda^{cs}<0$ $\mu$-almost everywhere on $\Lambda$.
Given a small $\eta>0$ we first take $\alpha$ satisfying \eqref{e.c1},
and then choose $\ell$ satisfying \eqref{e.c2} and \eqref{e.c3}.
The lemma then says that the Pesin block $\block^s(\ell,f)$ is large.
%covers $\Lambda$ except for a set of measure less than $3\eta$.

\begin{proof}
For $x \in \Lambda$, let
\begin{equation}\label{e.maximal function}
\phi(x)   =  \log \|Df^{\ell}(x) | E^{cs} \|, \quad
\phi^*(x) = \max_{n \ge 1} \frac{1}{n} \sum_{j=0}^{n-1} \phi(f^{\ell j}(x)) .
\end{equation}
Thus $\block^s(\ell,f) = \{ \phi^* < -1\}$.
Applying the Maximal Ergodic Theorem
to the restriction of the map $f^\ell$ to the (invariant) set of points $x \in \Lambda$ where $\lambda^{cs}(x) \le -\alpha$,
we obtain
$$
\int_{\{\phi^* \ge -1 \} \cap \{ \lambda^{cs} \le -\alpha\} } (\phi+1) \, d\mu \ge 0.
$$
Therefore
\begin{alignat*}{2}
0
&\le \int_{\{\phi^* \ge -1 \} \cap \{ \lambda^{cs} \le -\alpha\} } \frac{\phi + 1}{\ell} \, d\mu \\
&\le \alpha\eta + \int_{\{\phi^* \ge -1 \} \cap \{ \lambda^{cs} \le -\alpha\} } \frac{\phi}{\ell}\, d\mu 
&\quad&\text{(by \eqref{e.c2})} \\
&\le 2\alpha\eta +\int_{\{\phi^* \ge -1 \} \cap \{ \lambda^{cs} \le -\alpha\} } \lambda^{cs} \, d\mu 
&\quad&\text{(by \eqref{e.c3})} \\
&\le 2\alpha\eta -\alpha \mu \big(\{\phi^* \ge -1 \} \cap \{ \lambda^{cs} \le -\alpha\} \big)  \, ,
\end{alignat*}
and $\mu \big(\{\phi^* \ge -1 \} \cap \{ \lambda^{cs} \le -\alpha\} \big) \le 2\eta$.
It follows from \eqref{e.c1} that the set $\{\phi^* \ge -1 \}$ has $\mu$-measure less than $3\eta$, as we wanted to show.
\end{proof}

%%%%%%%%%%%%%%%%%%%%%%%%%%%%%%%%%%%%%%%
\subsection{\texorpdfstring{$C^2$}{C2} Pesin and Ergodicity}\label{ss.RRTU}%Complication to avoid problems with hyperref

Since we will use Pesin Theory, the following result will have an important role:

\begin{otherthm}[\cite{Avila}] \label{t.Avila}
The subset $\Diff_m^2(M)$ of $\Diff_m^1(M)$ is dense.
\end{otherthm}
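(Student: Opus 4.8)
\textbf{Proof proposal for Theorem~\ref{t.Avila}.} The statement asserts that $C^2$ conservative diffeomorphisms are $C^1$-dense in $\Diff_m^1(M)$. Since this is quoted from \cite{Avila}, the plan is to recall the structure of that argument rather than reprove it from scratch. The natural strategy is local-to-global: first show that near any given point one can perturb a $C^1$ diffeomorphism to a $C^2$ one while keeping it volume-preserving, then patch such local corrections together over a finite cover of $M$ using a partition of unity, controlling the $C^1$ size of the total perturbation.

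The key steps, in order, would be the following. First I would reduce to a local statement in a chart: fix a finite atlas of $M$ by charts in which $m$ is (a constant multiple of) Lebesgue measure, and a subordinate partition of unity; it then suffices to approximate $f$ on each chart domain by a map that is $C^2$ there and exactly volume-preserving, with the approximations agreeing on overlaps. Second, in a chart, write the perturbed map as $f$ composed with a near-identity conservative diffeomorphism supported in a small ball, and realize the correction as the time-one map of a divergence-free vector field (so that the flow is automatically volume-preserving) obtained by mollifying a $C^1$ divergence-free field that captures the desired first-order change; the point is that mollification of a divergence-free field stays divergence-free, so one gets a $C^\infty$ conservative perturbation that is $C^1$-small. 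Third, I would handle the patching: do the local smoothings successively over the finitely many charts, at each stage smoothing only on the part not yet treated while leaving the already-smoothed region untouched, and at each stage choosing the perturbation small enough (in $C^1$) that the cumulative error stays below $\eps$ and the map remains a diffeomorphism. Finally, one verifies that the resulting global map is $C^2$ (indeed $C^\infty$) everywhere, preserves $m$, and is within $\eps$ of $f$ in the $C^1$ topology.

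The main obstacle is the conservativity constraint: one cannot simply mollify $f$ coordinate-wise, because the mollified map will generically fail to preserve $m$, and correcting the Jacobian back to $1$ is exactly a nonlinear PDE (a Monge--Amp\`ere-type or prescribed-Jacobian problem) whose solutions must be controlled in $C^1$. The device that makes this tractable is to work with the \emph{generating vector field}: divergence-freeness is a linear condition preserved under convolution, so one arranges the perturbation as a flow of a smooth divergence-free field rather than trying to post-correct a smooth-but-non-conservative candidate. A secondary technical point is ensuring that the successive local perturbations do not destroy the smoothness gained in earlier steps, which is why the supports must be arranged carefully relative to the cover; but this is bookkeeping once the local construction with prescribed support is in hand.
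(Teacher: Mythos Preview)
The paper does not prove Theorem~\ref{t.Avila}; it is quoted from \cite{Avila} and used as a black box. So there is no proof in this paper to compare your proposal against.

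That said, your sketch has a genuine gap at the central step. You propose to ``write the perturbed map as $f$ composed with a near-identity conservative diffeomorphism'' obtained as the flow of a mollified divergence-free field. But composing the $C^1$ map $f$ with a $C^\infty$ near-identity map never raises the regularity of the composition above $C^1$; you cannot smooth $f$ this way. More fundamentally, there is no ``$C^1$ divergence-free field that captures the desired first-order change'': a volume-preserving diffeomorphism is not the time-one map of a divergence-free flow in general, and the displacement $f-\id$ in a chart is not divergence-free (e.g.\ $f(x,y)=(2x,y/2)$). So the linearity-under-convolution trick has nothing to act on. Flows of divergence-free fields let you move \emph{within} $\Diff_m^\infty$, but they do not by themselves let you approximate a given $C^1$ element.

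The actual difficulty is exactly the one you name and then try to sidestep: after mollifying $f$ in charts to a smooth $\tilde f$, the Jacobian defect $\det D\tilde f - 1$ is only $C^0$-small, and the Moser/Dacorogna--Moser correction built from it is a priori only $C^0$-close to the identity, not $C^1$-close. Bridging this loss of one derivative is the content of \cite{Avila}; it is not handled by your vector-field device. If you want to reconstruct the argument, the right thread to follow is a quantitative prescribed-Jacobian estimate (or an equivalent mechanism) that produces a $C^1$-small smooth conservative correction from a $C^0$-small smooth Jacobian defect, together with a localization/pasting scheme; the divergence-free-mollification idea does not substitute for this.
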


For the rest of this subsection, let $f$ be a fixed $C^2$ volume-preserving diffeomorphism.
By Pesin Theory\footnote{A recent comprehensive reference in book form is \cite{BP_book}.}, 
$W^u(x)$ and $W^s(x)$ (as defined in \S\ref{sss.manifolds})
are immersed manifolds for every $x$ in 
a full probability Borel set $R_f$.
The dimension of $W^u(x)$ is the number (with multiplicity) of positive Lyapunov exponents at $x$,
and symmetrically for $W^s(x)$.
%thus these manifolds do not have complementary dimensions if a Lyapunov exponent is zero.

Following \cite{RRTU},
we define the \emph{unstable Pesin heteroclinic class} 
of a hyperbolic periodic point $p$ as 
\begin{multline*}
\class^u(p) = \big\{ x \in R_f  ; \;  W^u(x) \text{ intersects transversely} \\
W^s(\cO(p)) \text{ in at least one point} \big\}.
\end{multline*}
This is always an invariant Lebesgue measurable set.\footnote{Here is a proof of measurability: 
For any $y \in M$, let $U_y \subset T_y M$ be the set of vectors 
that are exponentially contracted under negative iterations;
this is a Borel measurable function.
Notice that if $y$ belongs to a Pesin manifold $W^s(x)$ then $T_y W^u(x) = U_y$.
Let $Y$ be the subset of $y \in W^s(\cO(p))$ such that $U_y$ is transverse to $W^s(\cO(p))$; this is a Borel set.
Let $Z$ be the subset of $\cP \times Y$ formed by pairs $(x,y)$ such that $y \in W^u(x)$;
this is a Borel set.
By the Measurable Projection Theorem \cite[Theorem~III.23]{CV},
the projection $\class^u(p)$ of $Z$ in the first coordinate is Lebesgue measurable.}
This set has the following $u$-saturation property:
for $m$-almost every $x$ in $\class^u(p)$, almost every point in $W^u(x)$
(with respect to Riemannian volume on the submanifold)
belongs to $R_f$ and thus to $\class^u(p)$.
This follows from the absolute continuity of Pesin manifolds, see \cite[\S 8.6.2]{BP_book}

Analogously we define the \emph{stable Pesin heteroclinic class} $\class^s(p)$. 
The \emph{Pesin heteroclinic class}\footnote{This set is called an \emph{ergodic homoclinic class} in \cite{RRTU}.} 
of $p$ is defined as $\class(p) = \class^u(p)\cap \class^s(p)$. 

The usefulness of Pesin heteroclinic classes comes from the following result:

\begin{otherthm}[Criterion for Ergodicity; Theorem~A from \cite{RRTU}] \label{t.RRTU}
Let $p$ be a hyperbolic periodic point for $f \in \Diff_m^2(M)$.
If both sets $\class^u(p)$ and $\class^s(p)$ 
have positive $m$-measure 
then they are equal $m$-mod~$0$,
and the restriction of $m$ to any of them is an ergodic measure for $f$.
\end{otherthm}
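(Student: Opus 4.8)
The plan is to prove this by a Hopf‑type argument adapted to the nonuniformly hyperbolic setting; the hypothesis $f\in\Diff_m^2(M)$ is used only to invoke the absolute continuity of the Pesin stable and unstable laminations. \emph{Reduction.} Fix a countable dense family $\{\phi_k\}\subset C^0(M,\R)$, and let $\bar\phi_k$ be the average of $\phi_k$ along the periodic orbit $\cO(p)$. Write $\phi^{\pm}(x)=\lim_{n\to\infty}\tfrac1n\sum_{j=0}^{n-1}\phi(f^{\pm j}x)$ for the forward and backward Birkhoff averages, which exist and coincide $m$-a.e. I claim it suffices to show, for every $k$,
\[
\phi_k^{+}=\bar\phi_k\ \text{ $m$-a.e.\ on }\class^u(p)\qquad\text{and}\qquad \phi_k^{-}=\bar\phi_k\ \text{ $m$-a.e.\ on }\class^s(p).
\]
Indeed, set $E=\bigcap_k\{x:\phi_k^{+}(x)=\bar\phi_k\}$. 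Then $E$ is $f$-invariant, it contains $\class^u(p)$ and — since $\phi_k^{+}=\phi_k^{-}$ a.e. — also $\class^s(p)$ mod $0$, and on $E$ every Birkhoff average is constant, so $m|_E$ is ergodic. Hence $\class^u(p)$ and $\class^s(p)$ are positive‑measure invariant subsets of the ergodic system $(E,m|_E)$, so each equals $E$ mod $0$; this yields simultaneously that $\class^u(p)=\class^s(p)$ mod $0$ and that $m|_{\class^u(p)}$ is ergodic.

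\emph{Hopf ingredients.} The function $\phi^{+}$ is $f$-invariant, is constant along every Pesin stable leaf $W^s(y)$ (by uniform continuity of $\phi$ together with the exponential contraction along $W^s(y)$), and equals $\bar\phi_k$ \emph{identically} on the genuine stable manifold $W^s(\cO(p))$, since every forward orbit there converges to $\cO(p)$; symmetrically $\phi^{-}$ is constant along Pesin unstable leaves and equals $\bar\phi_k$ on $W^u(\cO(p))$. Because $f$ is $C^2$, Pesin's absolute continuity theorem guarantees that the full‑$m$‑measure ``Birkhoff‑regular'' set $G=\bigcap_k\{\phi_k^{+}=\phi_k^{-},\ \text{both defined}\}$ meets $m$-a.e.\ local unstable Pesin leaf — and $m$-a.e.\ local stable Pesin leaf — in a subset of full leaf‑Lebesgue measure. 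Consequently $m$-a.e.\ point $x$ is \emph{good}: $x\in G$, and leaf‑a.e.\ point of $W^u_{\mathrm{loc}}(x)$ and of $W^s_{\mathrm{loc}}(x)$ lies in $G$.

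\emph{The bridge.} Let $x\in\class^u(p)$ be good. By definition $W^u(x)$ contains a disk $D$ through a point $z$ of transverse intersection with $W^s(\cO(p))$; iterating forward by $f$ (which preserves goodness, membership in $\class^u(p)$, and the value of $\phi^{-}$) we may assume $z$ lies near $p$ with $D$ transverse to $W^s_{\mathrm{loc}}(p)$. By the inclination ($\lambda$‑)lemma, forward iterates of $D$ contain subdisks converging in $C^1$ to $W^u_{\mathrm{loc}}(p)$; these subdisks sit in the unstable leaves $W^u(f^{n}x)$, along which $\phi^{-}$ is constant and equal to $\phi^{-}(x)$ at leaf‑a.e.\ point. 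Combining this $C^1$‑approach with the density of $W^u(\cO(p))$ and a second application of Pesin absolute continuity one produces, on $W^u(x)$ itself, a set of positive leaf‑Lebesgue measure consisting of points that lie in $G$ and whose forward orbits converge to $\cO(p)$; at such a point $\phi^{-}=\phi^{+}=\bar\phi_k$, and since $\phi^{-}$ is leaf‑constant along $W^u(x)$ we conclude $\phi^{-}(x)=\bar\phi_k$, hence $\phi^{+}(x)=\bar\phi_k$. This establishes the first displayed identity for a.e.\ $x\in\class^u(p)$, and the symmetric argument applied to $f^{-1}$ gives the second. I expect the bridge step to be the main obstacle: coordinating the $\lambda$‑lemma in the hyperbolic block at $p$ with the absolute continuity of the Pesin laminations so that a genuinely positive‑leaf‑measure piece of $W^u(x)$ is certified to be forward‑asymptotic to $\cO(p)$, while keeping all the ``good''/measurability bookkeeping under control. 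This is precisely what is carried out in \cite{RRTU}, and is the reason we invoke that result rather than reproduce its proof.
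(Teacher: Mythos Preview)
The paper does not prove this statement: it is quoted verbatim as Theorem~A from \cite{RRTU} and used as a black box, so there is no ``paper's own proof'' to compare your attempt against. Your sketch follows the Hopf-argument strategy that \cite{RRTU} indeed uses, and your reduction and ``Hopf ingredients'' paragraphs are accurate.

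However, your description of the bridge step contains a genuine error. You claim to produce ``on $W^u(x)$ itself, a set of positive leaf-Lebesgue measure consisting of points that lie in $G$ and whose forward orbits converge to $\cO(p)$''. But a point whose forward orbit converges to $\cO(p)$ lies in the genuine stable manifold $W^s(\cO(p))$, and $W^s(\cO(p))\cap W^u(x)$ is a transverse intersection of complementary-dimensional submanifolds, hence a discrete set of leaf-Lebesgue measure zero. The actual mechanism in \cite{RRTU} is different: one uses the $\lambda$-lemma so that forward iterates $f^n(D)$ of a disk $D\subset W^u(x)$ accumulate in $C^1$ on $W^u_{\mathrm{loc}}(p)$, and then applies absolute continuity of the \emph{stable} holonomy between $f^n(D)$ and $W^u_{\mathrm{loc}}(p)$. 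This transports the leaf-a.e.\ identity $\phi^-=\bar\phi_k$ from $W^u_{\mathrm{loc}}(p)$ (where it holds because backward orbits converge to $\cO(p)$) back to a positive-leaf-measure subset of $f^n(D)$, via the stable-leaf constancy of $\phi^+$ together with $\phi^+=\phi^-$ on $G$. Since $\phi^-$ is constant along $W^u(x)$, this yields $\phi^-(x)=\bar\phi_k$. The delicate point---which you correctly flag as the main obstacle---is that the holonomy here is along \emph{Pesin} stable leaves based at points of $f^n(D)$, and one must ensure enough of those points are regular with uniformly sized local stable manifolds; this is where the bookkeeping in \cite{RRTU} is concentrated. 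Your final sentence acknowledging that you are invoking \cite{RRTU} rather than reproducing its proof is thus the right call, but the preceding heuristic should be corrected so as not to mislead.
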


% **This uses absolute continuity of the foliations, so it's not likely to extend to the $C^1$ + dominated case.**
% 
% ***OBS: O Lema 3.3 do \cite{RRTU} tambem pode ser util:
% Para $m$-quase todo pt $x$, quase todo pt nas variedades de Pesin de $x$ tem a mesma estatistica que $x$.***
% 

%The following are two simple and useful consequences of the Lambda Lemma:

Let us observe two properties of Pesin heteroclinic classes:

\begin{lemma}[Remark 4.4 from \cite{RRTU}]\label{l.lambda}
If $p$ and $q$ are hyperbolic periodic points such that $W^u(\cO(p))$ and $W^s(\cO(q))$
have nonempty transverse intersection then 
$\class^u(p) \subset \class^u(q)$ and $\class^s(q) \subset \class^s(p)$. 
\end{lemma}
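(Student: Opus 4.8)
The plan is to prove the inclusion $\class^u(p) \subset \class^u(q)$ by an inclination-lemma argument; the inclusion $\class^s(q) \subset \class^s(p)$ then follows by applying the same argument to $f^{-1}$, for which the hypothesis ``$W^u(\cO(p))$ meets $W^s(\cO(q))$ transversely'' becomes ``$W^u(\cO(q))$ meets $W^s(\cO(p))$ transversely'', and for which $\class^s(\cdot)$ and $\class^u(\cdot)$ are interchanged.

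So let $x \in \class^u(p)$, and let $y$ be a point where $W^u(x)$ meets $W^s(\cO(p))$ transversely, say $y \in W^s(p')$ with $p' \in \cO(p)$. Put $u = \dim W^u(\cO(p))$. Since $\cO(p)$ is hyperbolic, $\dim W^s(\cO(p)) = \dim M - u$; since $W^u(x)$ meets $W^s(\cO(p))$ transversely we get $\dim W^u(x) \ge u$, and since $W^u(\cO(p))$ meets $W^s(\cO(q))$ transversely we get $u + \dim W^s(\cO(q)) \ge \dim M$. Using transversality at $y$, the space $T_y W^u(x)$ surjects onto $T_y M / T_y W^s(p')$, which has dimension $u$; hence I may choose a $C^1$ disk $D \subset W^u(x)$ through $y$ with $\dim D = u$ and $T_y D$ complementary to $T_y W^s(p')$, so that $D$ meets $W^s(\cO(p))$ transversely at $y$ in complementary dimension.

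Applying the Inclination ($\lambda$-)Lemma at the hyperbolic periodic orbit $\cO(p)$, the forward iterates of $D$ (along multiples of the period) accumulate, in the $C^1$ topology and on compact sets, on $W^u(\cO(p))$. Now take a transverse intersection point $z \in W^u(\cO(p)) \cap W^s(\cO(q))$ provided by the hypothesis; a neighborhood of $z$ in $W^u(\cO(p))$ is a $u$-dimensional disk meeting $W^s(\cO(q))$ transversely at $z$ (the dimension count $u + \dim W^s(\cO(q)) \ge \dim M$ makes this consistent). Since transversality is an open condition in the $C^1$ topology, for some large $N$ the iterate $f^N(D)$ contains a $u$-dimensional disk $C^1$-close to this neighborhood of $z$, hence meeting $W^s(\cO(q))$ transversely near $z$. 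As $f^N(D) \subset f^N(W^u(x)) = W^u(f^N x)$, this exhibits a transverse intersection of $W^u(f^N x)$ with $W^s(\cO(q))$; together with the $f$-invariance of $R_f$, it gives $f^N x \in \class^u(q)$, and then the $f$-invariance of $\class^u(q)$ yields $x \in \class^u(q)$.

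The routine parts are the openness of transversality under $C^1$-perturbation and the standard form of the $\lambda$-lemma for a hyperbolic periodic orbit (applied to an iterate of $f$ at the fixed point $p'$, then pushed forward to reach a neighborhood of $z$ in $W^u(\cO(p))$); the only point requiring care is the dimension bookkeeping above, which ensures that the $u$-dimensional disk $D$ extracted from $W^u(x)$ is exactly of the dimension needed both for the $\lambda$-lemma and for producing a transverse intersection with $W^s(\cO(q))$ near $z$. I expect this dimension bookkeeping to be the main (very mild) obstacle.
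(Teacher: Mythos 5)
Your proof is correct. The paper gives no proof of this lemma --- it is quoted from Remark~4.4 of \cite{RRTU} --- but its internal label \texttt{l.lambda} already points to the Inclination Lemma argument you carry out; the dimension bookkeeping (extracting a $u$-dimensional $C^1$ sub-disk $D\subset W^u(x)$ transverse to $W^s(\cO(p))$ in complementary dimension, so that iterates of $D$ $C^1$-accumulate on pieces of $W^u(\cO(p))$ and inherit a transverse intersection with $W^s(\cO(q))$ by openness of transversality) and the reduction of $\class^s(q)\subset\class^s(p)$ to the first inclusion via $f^{-1}$ with $p$ and $q$ swapped are both sound.
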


\begin{lemma}\label{l.ess cl}
If $p$ is a hyperbolic periodic point with $m(\class^u(p))>0$ then
$W^u(\cO(p)) \subset \ess \cl \class^u(p)$.
\end{lemma}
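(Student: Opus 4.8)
The statement says that if $p$ is a hyperbolic periodic point with $m(\class^u(p)) > 0$, then $W^u(\cO(p)) \subset \ess \cl \class^u(p)$, i.e.\ every point $z \in W^u(\cO(p))$ has the property that $m\big(\class^u(p) \cap B(z,\rho)\big) > 0$ for every $\rho > 0$. The plan is to exploit the $u$-saturation property of $\class^u(p)$ recalled just above: for $m$-almost every $x \in \class^u(p)$, Riemannian-almost every $y \in W^u(x)$ lies again in $\class^u(p)$. The key geometric input is that, for a point $x$ whose unstable manifold $W^u(x)$ intersects $W^s(\cO(p))$ transversely, the $\lambda$-lemma (inclination lemma) forces forward iterates of (a piece of) $W^u(x)$ to accumulate, in the $C^1$ topology on compact pieces, on $W^u(\cO(p))$; hence $W^u(x)$ itself gets arbitrarily close to any prescribed point $z \in W^u(\cO(p))$ along its own intrinsic geometry, with uniformly large intrinsic size.

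Concretely, I would fix $z \in W^u(\cO(p))$ and $\rho > 0$, and argue as follows. Pick $x$ in the full-measure subset of $\class^u(p)$ where the $u$-saturation property holds; then $W^u(x)$ meets $W^s(\cO(p))$ transversely at some point $w$. Applying $f^n$ for $n$ in the appropriate arithmetic progression (a multiple of the period of $p$), the $\lambda$-lemma gives that $f^n(W^u_{\mathrm{loc}}(w,x))$ — a piece of $W^u(x)$ around $f^n w$ — converges in $C^1$ to a neighborhood of $z$ inside $W^u(\cO(p))$. In particular, for $n$ large, $W^u(x)$ (which is $f^n$-forward-invariant as a set of stable/unstable leaves: $f^n(W^u(x)) = W^u(f^n x)$, and $f^n x$ again lies in $\class^u(p)$ by invariance) contains a disk $D$, of definite intrinsic radius, that is $C^0$-close to a disk around $z$ in $W^u(\cO(p))$, so $D \subset B(z,\rho)$. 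By the $u$-saturation property applied to the point $f^n x \in \class^u(p)$, Riemannian-almost every point of $W^u(f^n x)$, in particular of $D$, lies in $\class^u(p)$. Since $D$ has positive Riemannian volume in its leaf, and Pesin unstable manifolds are absolutely continuous (the transverse measure disintegrates with positive density on a positive-measure set of leaves through $B(z,\rho)$), this yields $m\big(\class^u(p) \cap B(z,\rho)\big) > 0$. That is exactly $z \in \ess \cl \class^u(p)$.

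The step I expect to be the main obstacle is passing from ``Riemannian-a.e.\ point of a single leaf-disk $D$ lies in $\class^u(p)$'' to ``$m(\class^u(p) \cap B(z,\rho)) > 0$'': a single leaf has $m$-measure zero, so one genuinely needs the absolute continuity of the Pesin unstable lamination to transfer the positive intra-leaf measure into positive ambient measure, and one must make sure the $u$-saturation property holds for a positive-measure \emph{transversal} family of leaves near $z$, not just the one through $f^n x$. I would handle this by choosing $x$ to be a \emph{density point} (in the measure-theoretic sense along the unstable lamination) of $\class^u(p)$ within its own leaf — such points are $m$-full in $\class^u(p)$ by the Lebesgue density theorem applied to the disintegration — and then noting that under $f^n$ the whole local unstable lamination near $w$ maps, via the $\lambda$-lemma, onto a full lamination neighborhood of $z$ inside the unstable lamination of $\cO(p)$, with absolutely continuous holonomies. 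Combining this with the invariance and $u$-saturation of $\class^u(p)$ on each nearby leaf gives a positive-measure intersection with $B(z,\rho)$. Everything else ($\lambda$-lemma, absolute continuity of Pesin laminations for the $C^2$ map $f$, invariance of $\class^u(p)$) is standard and already invoked in the surrounding text.
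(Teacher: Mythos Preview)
Your proposal uses the right ingredients (the $\lambda$-lemma together with absolute continuity of the Pesin unstable lamination), and you correctly flag the real issue: a single leaf has $m$-measure zero, so one needs a positive-measure \emph{family} of unstable disks near $z$. Your fix-up, however, is vague at the crucial point. Saying ``choose $x$ to be a density point of $\class^u(p)$ within its own leaf'' buys nothing, since by $u$-saturation the leaf already carries $\class^u(p)$ a.e.; and the phrase ``the whole local unstable lamination near $w$'' is not something a single point $x$ hands you --- Pesin unstable manifolds do not form a local foliation, only a measurable lamination that is continuous on blocks.

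The paper's argument resolves this by setting up the family at the very start, and parametrizing it differently. From the positive-measure set $\class^u(p)$, one passes to a Pesin block where local unstable manifolds have uniform size and vary continuously; the intersection points with $W^s(\cO(p))$ then form a compact set $K \subset W^s(\cO(p))$, and the local unstable disks $D_y$ through $y\in K$ give a \emph{continuous} family whose union has positive $m$-measure. Two simplifications follow from this parametrization. First, every (regular) point of $D_y$ lies in $\class^u(p)$ \emph{by definition} --- its unstable manifold contains $y\in W^s(\cO(p))$ transversely --- so the $u$-saturation property is not needed. Second, the $\lambda$-lemma is applied once to the whole family: given an open $U$ meeting $W^u(\cO(p))$, one finds $n$ with $f^{-n}(U)$ slicing every $D_y$, and absolute continuity gives $m\big(\bigcup_y D_y \cap f^{-n}(U)\big)>0$; invariance finishes. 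Your forward-iteration picture is equivalent, but organizing the family via $K\subset W^s(\cO(p))$ rather than via a neighborhood of a single $x$ is what makes the positive-measure conclusion immediate.
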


Here $\ess \cl X$ denotes the \emph{essential closure} of a set $X\subset M$, that is, 
the set of points $x\in M$ such that $m(X\cap V)>0$ for every neighborhood $V$ of~$x$.

\begin{proof}[Proof of Lemma~\ref{l.ess cl}]
Assume that $m(\class^u(p))>0$.
Recall that the set $R_f$ is the union of a sequence of %(non-invariant) 
blocks, in each of these there are local Pesin manifolds of uniform size
that depend on the point in a uniformly continuous way with respect to the $C^1$ topology.
Therefore
we can find a continuous family of disks $D_y$, where $y$ runs over a compact subset $K$ of $W^s(\cO(p))$,
with the following properties: 
each disk $D_y$ contains $y$, is contained in a Pesin stable manifold, and is transverse to $W^s(\cO(p))$;
the union $\bigcup_{y \in K} D_y$ has positive measure.
Now let $U$ be any open set intersecting $W^u(\cO(p))$.
By the Lambda Lemma, there is $n>0$ such that $f^{-n}(U)$ intersects all disks $D_y$, $y\in K$.
By the absolute continuity of Pesin manifolds,
this implies that 
$\bigcup_{y} D_y \cap f^{-n}(U)$ has positive $m$ measure
(use \cite[Corollary~8.6.9]{BP_book}).
Since the class $\class^u(p)$ contains mod~$0$ the union of disks 
and is invariant, we conclude that its intersection with $U$ has positive measure.
\end{proof}

%%%%%%%%%%%%%%%%%%%%%%%%%%%%%%%%%%%%%%%%
\section{Proof of the Main Result} \label{s.proof}

In this section we use all previous material to prove Theorem~\ref{t.main}.
We assume from now on that $\dim M \ge 3$, because otherwise the theorem is reduced to the
Ma\~n\'e--Bochi Theorem \cite{B_ETDS}.

Let $\cR \subset \Diff_m^1(M)$ be the intersection of the residual sets 
given by Theorems~\ref{t.CED}, \ref{t.PIS}, \ref{t.CLS}, and \ref{t.KS} with $r=1$, 
and also Theorems \ref{t.connecting}, \ref{t.BV}, and \ref{t.ECL plus}.
Fix any $f \in \cR$;
we will show that it satisfies the conclusions of Theorem~\ref{t.main}.
This will be done in two steps:
\begin{itemize}
\item In Lemma~\ref{l.main} we show that $C^2$ perturbations of $f$
have an ergodic component with positive Lebesgue measure (and some additional properties).
\item Using continuity of the ergodic decomposition at the original $C^1$-diffeomorphism $f$ 
(along with other things),
we show that it already has the desired properties.
\end{itemize}

For $i \in \{1,\ldots, d-1 \}$, let $\NUH_i(f)$ be the set of points $x\in \NUH(f)$ that have index $i$,
that is, the set of Lyapunov regular points such that $\lambda_i(f,x)>0>\lambda_{i+1}(f,x)$.

\begin{lemma}\label{l.main}
Let $f \in \cR$ and $i\in \{1,\ldots, d-1\}$. % be such that $m(\NUH_i(f))>0$.
Assume that $\Lambda \subset \NUH_i(f)$ is a Borel $f$-invariant set of positive measure 
that has a dominated splitting of index $i$.
Then for any $\eps>0$,
there exist finitely many (hyperbolic) periodic points $p_1$, \ldots, $p_J$ of~$f$ of index $i$
with the following properties:
For every volume-preserving $C^2$-diffeomorphism $g$ sufficiently $C^1$-close to~$f$,
there exist $j \in \{1,\ldots,J\}$ such that
if $p^g = p_j^g$ denotes the continuation of~$p_j$ (that is, the unique $g$-periodic point that
is close to $p_j$ and has the same period),
then:
\begin{enumerate}
\item\label{i.erg class} 
the measure $m | \class(p^g,g)$ is non-zero and ergodic for $g$;
\item\label{i.NUH class} 
$\class(p^g, g) \subset \NUH_i(g)$ mod~$0$;
\item\label{i.neighbor} 
$\class(p^g, g) \subset B_\eps(\Lambda)$ mod~$0$;
\item\label{i.sym dif} 
$m\big( \class(p^g,g) \vartriangle \Lambda \big) < \eps$.
\end{enumerate}
\end{lemma}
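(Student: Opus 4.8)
The plan is to realize the outline of \S\ref{ss.outline}. \textit{Step 1 (reductions and Pesin blocks).} I would first replace $\Lambda$ by the $f$-invariant subset $\Lambda\cap\{\lambda_i(f,\cdot)>a\}\cap\{\lambda_{i+1}(f,\cdot)<-a\}$, with $a>0$ small and chosen so that $a,-a$ are not atoms of the distributions of $\lambda_i(f,\cdot),\lambda_{i+1}(f,\cdot)$ and the discarded measure is $\ll\eps$; it still carries the dominated splitting $E^{cu}\oplus E^{cs}$ of index $i$, and on it $\lambda^{cs}=\lambda_{i+1}(f,\cdot)<-a$ and (for $f^{-1}$) the analogous quantity is $<-a$. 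Fix a small neighbourhood $V\subset B_\eps(\Lambda)$ of $\cl\Lambda$ carrying the strictly invariant cone fields of \S\ref{ss.dom pesin}; these persist $C^1$-nearby, as does the dominated splitting over the maximal invariant set $V_g=\bigcap_{n}g^n(V)$. Applying Lemma~\ref{l.block} to $f$ and $f^{-1}$ (with $\mu$ the normalization of $m|\Lambda$), I would take $\ell$ so large that $m(\Lambda\setminus\block(\ell,f))$, and even $m(\Lambda\setminus\block'(\ell,f))$ for a strongly contracting variant $\block'$ (threshold $-K$, $K=a\ell/2$), is as small as desired; Theorem~\ref{t.manifold} then furnishes $r_0>0$ so that points of $\block^s(\ell,f)$, resp.\ $\block^u(\ell,f)$, have center-stable, resp.\ center-unstable, Pesin disks of radius $r_0$, with the same $r_0$ valid for all maps $C^1$-close to $f$. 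Fix $0<r\ll r_0$.

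\textit{Step 2 (closing up periodic orbits).} Since $\block'(\ell,f)$ has $m|\Lambda$-measure close to $1$, a positive-measure set of ergodic components of $m|\Lambda$ give it measure close to $1$. For each such component I would take a point $x_0\in\block(\ell,f)$ that is simultaneously a Lebesgue density point of $\block(\ell,f)$, generic for the component, and has all the Birkhoff averages defining $\block'(\ell,f)$ below $-K$; then apply Theorem~\ref{t.ABC ECL} to shadow $\cO(x_0)$ by a hyperbolic periodic orbit. As supports and Lyapunov exponents converge, the periodic point has index $i$; choosing a long shadow and an appropriate starting point on the orbit, the strong threshold $-K$ absorbs the single short closing excursion, so some point $p$ of the periodic orbit lies in $\block(\ell,f)$; and as $x_0$ is a density point and $p$ is close to it, $B(p,r/2)$ meets $\block(\ell,f)$ in positive $m$-measure. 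A finite Vitali subcover of a subset of $\block(\ell,f)$ of measure $>m(\Lambda)-\eps$ by such balls $B(p_j,r/2)$ produces the required $p_1,\ldots,p_J$.

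\textit{Step 3 (transfer to $g$ and conclusion).} Let $g$ be $C^2$, $m$-preserving and $C^1$-close to $f$. Theorem~\ref{t.CLS} makes $\lambda_i(g,\cdot),\lambda_{i+1}(g,\cdot)$ close in $L^1(m)$ to the $f$-versions; with the uniform bounds $\pm a$ on $\Lambda$ and the no-atom choice this gives $m\big(\NUH_i(g)\vartriangle\Lambda\big)<\eps$. Because the blocks are cut out by explicit continuous Birkhoff data, $\lambda^{cs}_g=\lambda_{i+1}(g,\cdot)$ stays $<-a/2$ on most of $V_g$, so Lemma~\ref{l.block} applied to $g$ and $g^{-1}$ keeps $m(V_g\setminus\block(\ell,g))$ small; combining with Theorems~\ref{t.CED}, \ref{t.PIS} and Lemma~\ref{l.sunday}, $\block(\ell,g)$ is $m$-close to $\block(\ell,f)$ and its points carry Pesin disks of radius $\geq r_0/2\gg r$. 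Hence the blocks cannot degenerate over every $B(p_j,r/2)$ at once, and for some $j$ we have $p:=p_j^g\in\block(\ell,g)$ and $m\big(B(p,r)\cap\block(\ell,g)\big)>0$. For $x$ in this set, $W^u(x)$ is a $cu$-disk and $W^s_{\mathrm{loc}}(p)$ a $cs$-disk, both of radius $\geq r_0/2$ and centred within $r\ll r_0$, so transversality of the cone fields forces $W^u(x)\pitchfork W^s(\cO(p))\neq\emptyset$, i.e.\ $x\in\class^u(p,g)$; symmetrically $x\in\class^s(p,g)$. Thus $\class^u(p,g),\class^s(p,g)$ have positive measure and Theorem~\ref{t.RRTU} gives conclusion~\ref{i.erg class}. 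Conclusion~\ref{i.NUH class} is a dimension count: for $x\in\class(p,g)\subset R_g$, $W^u(x)\pitchfork W^s(\cO(p))$ forces $\dim W^u(x)\geq i$ and $W^s(x)\pitchfork W^u(\cO(p))$ forces $\dim W^s(x)\geq d-i$, so $x$ has exactly $i$ positive and $d-i$ negative exponents. For \ref{i.neighbor} and \ref{i.sym dif}: the ``$\supset$'' half of \ref{i.sym dif} follows by showing $\class(p,g)$ contains mod $0$ all of $\block(\ell,g)\cap V_g$ (a point there has long Pesin manifolds which, after iteration, cross $W^s(\cO(p))$ and $W^u(\cO(p))$ transversely, using that the transverse intersections supplied by Theorem~\ref{t.connecting} persist for $g$), which is $m$-most of $\Lambda$; the ``$\subset$'' half and \ref{i.neighbor} follow since $\class(p,g)=\class^u(p,g)$ mod $0$ is ergodic and trapped between $\NUH_i(g)$ and $V_g$, so $\class(p,g)\subset V_g\subset B_\eps(\Lambda)$ mod $0$ and $m(\class(p,g)\setminus\Lambda)\le m(\NUH_i(g)\setminus\Lambda)<\eps$.

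\textit{Main obstacle.} The crux is the robustness of the Pesin blocks together with the confinement of the ergodic component: $\block^s(\ell,g)$ is defined by infinitely many strict Birkhoff inequalities, so $g\mapsto m\big(A\cap\block^s(\ell,g)\big)$ is not obviously semicontinuous, and it is precisely to control this — so that for every nearby $g$ some $B(p_j^g,r)$ still carries block mass, and so that $\class(p,g)$ neither collapses to zero measure nor leaks out of $B_\eps(\Lambda)$ — that the genericity of $f$ is used, through Theorems~\ref{t.CED}, \ref{t.PIS}, \ref{t.CLS} and Lemma~\ref{l.sunday}. Turning ``the blocks vary only a little'' and ``finitely many periodic points suffice'' into rigorous estimates, and establishing the confinement $\class(p,g)\subset V_g$, is where the real work lies.
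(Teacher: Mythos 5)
Your outline is the right one, and Steps 1 and 3 up through the invocation of Theorem~\ref{t.RRTU} essentially match the paper. But there are genuine gaps in the way you obtain the periodic points and, more seriously, in the proof of conclusion~\ref{i.sym dif}.

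The most concrete error is at the very end, where you write $m(\class(p,g)\setminus\Lambda)\le m(\NUH_i(g)\setminus\Lambda)<\eps$. The lemma is stated for an arbitrary positive-measure $f$-invariant $\Lambda\subset\NUH_i(f)$ with a dominated splitting, and $\Lambda$ may well be a small piece of $\NUH_i(f)$; moreover, even when $\Lambda$ nearly fills $\NUH_i(f)$, a $C^1$-small perturbation of $f$ can switch a large-measure region from zero exponents to nonzero exponents of index $i$, since $L^1$-continuity of the $\lambda_j$'s does not prevent exponents that were zero from becoming (small but) nonzero. So $m(\NUH_i(g)\setminus\Lambda)$ has no a priori bound, and the containment in $V_g$ does not help either: $V_g$ is merely the maximal invariant set in a neighborhood of $\Lambda$ and can have much larger measure than $\Lambda$ even for $g=f$. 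The paper handles this with Theorem~\ref{t.PIS} directly: for $g$ close to $f$ there is a $g$-invariant set $\Lambda^g$ with $\Lambda^g\subset B_\eps(\Lambda)$ and $m(\Lambda^g\vartriangle\Lambda)<\eps/2$ (this is exactly what the genericity of $f$, via persistence of invariant sets, buys), and then ergodicity of $m|\class(p^g,g)$ together with $m(\Lambda^g\cap\class(p^g,g))>0$ forces $\class(p^g,g)\subset\Lambda^g$ mod~$0$, from which both~\ref{i.neighbor} and~\ref{i.sym dif} follow.

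Two smaller but still real issues. First, in Step~2 the claim that ``the strong threshold $-K$ absorbs the single short closing excursion, so some point $p$ of the periodic orbit lies in $\block(\ell,f)$'' is left heuristic; the periodic orbit has its own Birkhoff sums, and showing they satisfy the block inequalities for all $n\ge 1$ requires an actual estimate, not only that the shadowed orbit has strongly negative sums. The paper avoids this entirely: it applies the non-ergodic closing lemma (Theorem~\ref{t.ECL plus}, which it builds from Theorem~\ref{t.ABC ECL} via the ergodic decomposition and Lemma~\ref{l.combination}) to obtain a finitely supported invariant measure $\mu$ whose Lyapunov data $\cL_*\mu$ is weak-star close to $\cL_* m_\Lambda$, and then runs Lemma~\ref{l.block} \emph{with $\mu$ in place of $m$}; this directly produces a finite union of periodic orbits most of whose points lie in $\block(\ell,f)$, with no excursion argument. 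Second, in Step~3 you need $p_j^g\in\block(\ell,g)$ for the chosen $j$, and $m$-closeness of the blocks does not give this (it is a statement about a single point). The paper's observation is that for a periodic point membership in the block involves only finitely many strict (hence open) inequalities, so $p_j\in\block(\ell,f)$ implies $p_j^g\in\block(\ell,g)$ for $g$ close to $f$; this explicit remark is what makes the step work. Your dimension-count argument for~\ref{i.NUH class} is correct and is an acceptable substitute for the paper's ergodicity argument.
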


Notice that if $m(\NUH_i(f))>0$ then by Corollary~\ref{c.BV}
it always exists a set $\Lambda$ satisfying the
hypotheses of the lemma; moreover $\Lambda$ can be taken so that
the measure of $\NUH_i(f) \setminus \Lambda$ is as small as desired. 
(In fact, since we will prove later that $m|\NUH_i(f)$ is ergodic,
any set $\Lambda$ that satisfies the hypotheses of the lemma coincides mod~$0$ with $\NUH_i(f)$.)

\begin{proof}[Proof of Lemma~\ref{l.main}]
Fix a Borel invariant set $\Lambda \subset \NUH_i(f)$ with a dominated splitting 
$E^{cu}\oplus E^{cs}$ of index $i$.
Also fix a positive number $\eps$, which we can assume less than $m(\Lambda)$.
As in \S\ref{sss.manifolds}, we fix a neighborhood $V=B_{r_*}(\Lambda)$
and strictly invariant cone fields $\cC^{cu}$, $\cC^{cs}$ on it.
Then, for every $g$ sufficiently $C^1$-close to $f$ and $\ell \in \Z_+$ 
we let $\block^s(\ell,g)$ and $\block^u(\ell,g)$ be the 
associated Pesin $s$- and $u$-blocks, 
viewed as subsets of the maximal $g$-invariant set contained in
$B_{r_*}(\Lambda)$. 

Let $\eta = \eps/200$.
Since
$\lambda_{i+1}(f,x) < 0 < \lambda_i(f,x)$ for $x \in \Lambda$,
we can find $\alpha \in (0,1)$ 
such that 
\begin{align}
m \big\{x \in \Lambda ; \; \lambda_{i+1}(f,x) > - \alpha \text{ or } \lambda_{i}(f,x) < \alpha \big\} &< \eta 
\quad \text{and} \label{e.alpha1} \\
m \big\{x \in \Lambda ; \; \lambda_{i+1}(f,x) = - \alpha \text{ or } \lambda_{i}(f,x) = \alpha \big\} &= 0 \, .
\label{e.alpha0}
\end{align}
Let $\ell > 1 /(\alpha \eta)$ be such that 
\begin{multline}\label{e.l}
\int_{\Lambda} \Big| \frac{1}{\ell} \log \|Df^{\ell} | E^{cs}(x) \| - \lambda_{i+1}(f,x) \Big| \, dm(x)
\\ + 
\int_{\Lambda} \Big| \frac{1}{\ell} \log \|Df^{-\ell} | E^{cu}(x) \| - \lambda_{i}(f,x) \Big| \, dm(x)
< \alpha \eta \, .
\end{multline}
Also fix a positive $r<r_*$
such that if $g$ is close to $f$ and $x$, $y$ are points in $\block(\ell,g)$
whose distance is less than $r$ then the Pesin manifolds $W^u(x)$ and $W^s(y)$
have a transverse intersection.

Once these constants are fixed, let us prove three sublemmas.

\begin{subl}[The Pesin block is robustly large]\label{sl.robust block}
If $g$ is sufficiently close to $f$ then
$$
m \big( \Lambda \setminus \block(\ell,g) \big) < 61 \eta.
$$
\end{subl}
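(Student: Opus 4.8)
The plan is to derive the estimate for a nearby $g$ by applying Lemma~\ref{l.block} --- together with its mirror image for $g^{-1}$ --- \emph{to the map $g$ itself}, but on an auxiliary $g$-invariant set supplied by Theorem~\ref{t.PIS}, controlling the hypotheses of that lemma through the $L^1$-continuity of the Lyapunov exponents (Theorem~\ref{t.CLS}). Fix an auxiliary constant $\eta'' = 10\eta$ and a small number $\eta' \in (0, r_*)$, to be pinned down at the end. If $g$ is sufficiently $C^1$-close to $f$, Theorem~\ref{t.PIS} produces a $g$-invariant Borel set $\tilde\Lambda$ with $\tilde\Lambda \subset B_{\eta'}(\Lambda) \subset V$ and $m(\tilde\Lambda \vartriangle \Lambda) < \eta'$. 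Since $\tilde\Lambda$ is $g$-invariant and contained in $V$, it is contained in the maximal $g$-invariant set $V_g$, hence carries the continued dominated splitting $E^{cu}\oplus E^{cs}$ of index~$i$. As this splitting has index $i$, at every Lyapunov-regular point of $\tilde\Lambda$ the Oseledets decomposition refines it with $E^{cu}$ carrying the $i$ largest exponents; therefore $\lambda^{cs}_g = \lambda_{i+1}(g, \cdot)$ holds $m$-almost everywhere on $\tilde\Lambda$, and, reading $f^{-1}$ and $E^{cu}$, the corresponding top $cs$-exponent of $g^{-1}$ equals $-\lambda_i(g,\cdot)$. These identities are what allow Theorem~\ref{t.CLS}, which concerns $\lambda_i$ and $\lambda_{i+1}$, to be brought to bear.

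Apply Lemma~\ref{l.block} to $g$, with $\tilde\Lambda$ playing the role of $\Lambda$, with the ($g$-invariant) measure $m$, the same $\alpha$ and $\ell$, and $\eta''$ in place of $\eta$. Condition \eqref{e.c2} holds at once, since $\ell > 1/(\alpha\eta) \ge 1/(\alpha\eta'')$. For \eqref{e.c1} one must bound $m\{x \in \tilde\Lambda;\ \lambda_{i+1}(g,x) > -\alpha\}$: because $f$ lies in the residual set of Theorem~\ref{t.CLS}, $\lambda_{i+1}(g,\cdot) \to \lambda_{i+1}(f,\cdot)$ in $L^1(m)$ as $g \to f$, hence also in distribution; since $-\alpha$ is not an atom of the law of $\lambda_{i+1}(f,\cdot)$ restricted to $\Lambda$ --- this is exactly condition \eqref{e.alpha0} --- the quantity $m\{x\in\Lambda;\ \lambda_{i+1}(g,x) > -\alpha\}$ converges to $m\{x\in\Lambda;\ \lambda_{i+1}(f,x) > -\alpha\}$, which is below $\eta$ by \eqref{e.alpha1}; adding $m(\tilde\Lambda \setminus \Lambda) < \eta'$ leaves the total well below $\eta''$ once $g$ is close enough to $f$.

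The serious point is condition \eqref{e.c3}. Using $\lambda^{cs}_g = \lambda_{i+1}(g,\cdot)$ on $\tilde\Lambda$, I split the integral $\int_{\tilde\Lambda} |\frac{1}{\ell}\log\|Dg^\ell| E^{cs}_g\| - \lambda^{cs}_g|\,dm$ over $\tilde\Lambda\cap\Lambda$ and $\tilde\Lambda\setminus\Lambda$. On $\tilde\Lambda\cap\Lambda$, the triangle inequality in $L^1$ reduces the estimate to three contributions: first, the uniform closeness of the finite-time quantity $\frac{1}{\ell}\log\|Dg^\ell(x)|E^{cs}_g(x)\|$ to its $f$-counterpart $\frac{1}{\ell}\log\|Df^\ell(x)|E^{cs}_f(x)\|$ --- a standard consequence of the $C^1$-continuity of dominated splittings and their invariant cone fields (if need be, one first replaces $E^{cs}$ by a sufficiently long finite cone-intersection, which is jointly continuous in $(g,x)$ and exponentially close to $E^{cs}$); second, the $cs$-part of \eqref{e.l}, which bounds $\int_\Lambda|\frac{1}{\ell}\log\|Df^\ell|E^{cs}_f\| - \lambda_{i+1}(f,\cdot)|\,dm$ by $\alpha\eta$; and third, $\|\lambda_{i+1}(f,\cdot) - \lambda_{i+1}(g,\cdot)\|_{L^1(m)}$, which tends to $0$ by Theorem~\ref{t.CLS}. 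On $\tilde\Lambda\setminus\Lambda$, where $E^{cs}_f$ is undefined, the integral is controlled using $m(\tilde\Lambda\setminus\Lambda) < \eta'$, the uniform bound on $|\frac{1}{\ell}\log\|Dg^\ell|E^{cs}_g\||$, the $L^1(m)$-convergence of $\lambda_{i+1}(g,\cdot)$, and the absolute continuity of the finite integral $\int|\lambda_{i+1}(f,\cdot)|\,dm$. Choosing $\eta'$ small and then $g$ close makes the total less than $\alpha\eta''$, so Lemma~\ref{l.block} gives $m(\tilde\Lambda \setminus \block^s(\ell,g)) < 3\eta''$. The symmetric argument for $g^{-1}$ --- using the $E^{cu}$-part of \eqref{e.l}, the $\lambda_i$-parts of \eqref{e.alpha1} and \eqref{e.alpha0}, and Theorem~\ref{t.CLS} for $\lambda_i$ --- gives $m(\tilde\Lambda \setminus \block^u(\ell,g)) < 3\eta''$. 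Since $\Lambda \setminus \block(\ell,g) \subset (\Lambda\setminus\tilde\Lambda) \cup (\tilde\Lambda\setminus\block^s(\ell,g)) \cup (\tilde\Lambda\setminus\block^u(\ell,g))$, we conclude $m(\Lambda\setminus\block(\ell,g)) < \eta' + 6\eta'' = \eta' + 60\eta < 61\eta$, provided $\eta' < \eta$.

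I expect condition \eqref{e.c3} to be the main obstacle. The quantitative relation between the $\ell$-step contraction of $E^{cs}$ and the exponent $\lambda^{cs}$ --- bought outright for $f$ by the choice of $\ell$ in \eqref{e.l} --- cannot be transported pointwise to a nearby $g$, since $\lambda^{cs}_g = \lambda_{i+1}(g,\cdot)$ may fail to be close to $\lambda_{i+1}(f,\cdot)$ at individual points even though the finite-time expansions stay uniformly close. Closing this gap is precisely what Theorem~\ref{t.CLS} is for, and its combination with Theorem~\ref{t.PIS} (needed to replace the merely $f$-invariant $\Lambda$ by a genuinely $g$-invariant set close to it) and with the easy pointwise identity $\lambda^{cs}_g = \lambda_{i+1}(g,\cdot)$ coming from domination is the heart of the sublemma. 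The other hypotheses are comparatively soft: \eqref{e.c1} requires only convergence in distribution --- which follows from $L^1$-continuity together with the no-atom condition \eqref{e.alpha0} --- while \eqref{e.c2} is free from the choice of $\ell$.
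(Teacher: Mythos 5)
Your proof is correct and follows essentially the same route as the paper's: a $g$-invariant replacement for $\Lambda$ via Theorem~\ref{t.PIS}, $L^1$-control of $\lambda_{i+1}(g,\cdot)$ and $\lambda_i(g,\cdot)$ via Theorem~\ref{t.CLS}, the triangle inequality to verify \eqref{e.c3}, and two applications of Lemma~\ref{l.block} (one for $g$, one for $g^{-1}$). The only cosmetic difference is in how condition~\eqref{e.c1} is checked --- you invoke convergence in distribution together with the no-atom condition~\eqref{e.alpha0}, whereas the paper sidesteps the no-atom condition by applying Lemma~\ref{l.block} with $\alpha/2$ in place of $\alpha$ and absorbing the points where $|\lambda_{i+1}(g,\cdot)-\lambda_{i+1}(f,\cdot)|>\alpha/2$ into a small exceptional set via Markov's inequality; both routes are sound.
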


\begin{proof}
Let $C = \max_M |\log \|Df^{\pm 1}\||$.
By Theorem~\ref{t.PIS} (and the fact that $f\in \cR$),
for any $g$ sufficiently close to $f$ there exists a $g$-invariant Borel set $\Lambda^g \subset B_{r^*}(\Lambda)$
such that $m(\Lambda^g \vartriangle \Lambda) < C^{-1}\alpha\eta$.
Taking $g$ sufficiently close to $f$, we can guarantee that 
$$
\left| \frac{1}{\ell} \log \|Dg^{\ell} | E^{\star}_g \| - \frac{1}{\ell} \log \|Df^{\ell} | E^{\star}_f \| \right| 
< \alpha\eta
\quad \text{on $\Lambda \cap \Lambda^g$, $\star = cu$, $cs$.}
$$
By Theorem~\ref{t.CLS} (and the fact that $f\in \cR$) 
we can also suppose that for $j=i$, $i+1$, 
the $L^1$-distance between $\lambda_{j}(g, \mathord{\cdot})$ and
$\lambda_{j}(f, \mathord{\cdot})$ is less than $\alpha\eta$,
and small enough so that
$$
m \big\{x \in M ; \; |\lambda_{j}(g,x) - \lambda_{j}(f,x)| > \alpha/2 \big\} < \eta \, .  
$$

We will check that the hypotheses of Lemma~\ref{l.block}
are satisfied with $\mu=m$, $g$ in the place of $f$, $\Lambda^g$ in the place of $\Lambda$,
$\alpha/2$ in the place of $\alpha$, and $10\eta$ in the place of $\eta$.
That is, we have 
\begin{gather}
m \{ x \in \Lambda^g ; \; \lambda_{i+1}(g,x) > -\alpha/2 \} < 10\eta ,  \label{e.c1 again}\\
\ell > 1/(5\alpha \eta) \, , \label{e.c2 again} \\
\int_{\Lambda^g} \left| \frac{1}{\ell} \log \|Dg^{\ell} | E^{cs}_g \| - \lambda_{i+1}(g) \right| \, dm < 5 \alpha \eta  \label{e.c3 again}.
\end{gather}

First, the set $\{x \in \Lambda^g ; \; \lambda_{i+1}(g,x) > -\alpha/2 \}$ is contained in the union
of the three sets
$$
\Lambda^g \setminus \Lambda, \ 
\{x \in \Lambda ; \; \lambda_{i+1}(f,x) > -\alpha \}, \   
\{x \in M ; \; |\lambda_{i+1}(g,x) - \lambda_{i+1}(f,x)| > \alpha/2\} \, ,
$$
and each of them has measure less than $\eta$;
thus \eqref{e.c1 again} holds.
%with $\mu=m$, $g$ in the place of $f$, $\Lambda^g$ in the place of $\Lambda$,
%$\alpha/2$ in the place of $\alpha$, and $10\eta$ in the place of $\eta$.
Second, \eqref{e.c2 again} is true by definition of $\ell$.
Third, 
\begin{align*}
\int_{\Lambda^g} &\left| \frac{1}{\ell} \log \|Df^{\ell} | E^{cs}_g \| - \lambda_{i+1}(g) \right| \\
&\le 
\int_{\Lambda^g \cap \Lambda} \left| \frac{1}{\ell} \log \|Dg^{\ell} | E^{cs}_g \| - \lambda_{i+1}(g) \right| 
+ C m(\Lambda^g \setminus \Lambda) \\
&\le
\alpha\eta + 
\int_{\Lambda^g \cap \Lambda} \left| \frac{1}{\ell} \log \|Df^{\ell} | E^{cs}_f \| - \lambda_{i+1}(f) \right| 
+ \|\lambda_{i+1}(f) - \lambda_{i+1}(g)\|_{L^1} +  \alpha \eta \\
&\le 4\alpha\eta \, .
\end{align*}
So \eqref{e.c3 again} is also satisfied. 

Lemma~\ref{l.block} then gives
$m(\Lambda^g \setminus \block^s(\ell,g)) < 30\eta$.
An analogous estimate gives $m(\Lambda^g \setminus \block^u(\ell,g)) < 30\eta$.
It follows that
\[
m(\Lambda \setminus \block(\ell,g)) \le m(\Lambda^g \setminus \block(\ell,g)) 
+ m(\Lambda \setminus \Lambda^g) < 61 \eta. \qedhere
\]
\end{proof}

Let $m_\Lambda$ be the $f$-invariant measure $m_\Lambda(A) = m(\Lambda\cap A)/m(\Lambda)$.
%Before proving Lemma~\ref{l.good periodic}, \margem{nessa ordem?}
%let us show the following:

\begin{subl}\label{sl.comparasion}
%Given any $r>0$ and $\eps>0$,\margem{trocar $\eps$ por outra letra}
If $\mu$ is a probability measure sufficiently weak-star close to $m_\Lambda$
then
$$
m_\Lambda \big( B_r(G) \big) \ge \mu(G) - \eta
\quad \text{for any Borel set $G$.}
$$
\end{subl}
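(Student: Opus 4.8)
The plan is to deduce Sublemma~\ref{sl.comparasion} from a standard fact about weak-star convergence combined with the Portmanteau characterization recalled in \S\ref{s.prelim}. The key observation is that for any set $G$ and any $\delta > 0$, the open $\delta$-neighborhood $B_\delta(G)$ is an open set, so $\mu \mapsto \mu(B_\delta(G))$ is lower semi-continuous on $\cM(M)$. The subtlety is that the statement must hold \emph{uniformly} over all Borel sets $G$, whereas weak-star convergence only gives control for a fixed test set; so we cannot simply apply lower semicontinuity set by set. Instead I would argue by contradiction: suppose there is a sequence $\mu_k \to m_\Lambda$ and Borel sets $G_k$ with $m_\Lambda(B_r(G_k)) < \mu_k(G_k) - \eta$ for all $k$.

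\textbf{Reduction to a fixed test set.} First I would replace $G_k$ by the open set $B_{r/2}(G_k)$: since $G_k \subset B_{r/2}(G_k)$ we have $\mu_k(G_k) \le \mu_k(B_{r/2}(G_k))$, and since $B_r(B_{r/2}(G_k)) \subset B_{r}(G_k)$ (indeed $B_{3r/2}(G_k)$ lies in $B_{2r}(G_k)$; more carefully, one should just use $B_{r/2}$ of $B_{r/2}(G_k)$, which is contained in $B_r(G_k)$), we may assume each $G_k$ is open. Actually the cleanest bookkeeping: set $G_k' = B_{r/3}(G_k)$, an open set, and note $B_{r/3}(G_k') \subset B_r(G_k)$, so it suffices to prove the inequality with $r$ replaced by $r/3$ and $G$ ranging over open sets. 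This is just a relabeling of the constant, so without loss of generality we treat the case where all the $G_k$ are open and we want $m_\Lambda(B_{r}(G_k)) \ge \mu_k(G_k) - \eta$.

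\textbf{The compactness trick.} The point of allowing an arbitrary open $G$ is that $G \subset B_r(G)$, and $\mu_k(G) \le \mu_k(B_r(G))$, while the quantity we must bound below involves $m_\Lambda(B_r(G))$. I would use the following: the indicator $\charac_{B_r(G)}$ dominates, on $M$, the continuous function $\psi_G(x) = \max\{0, 1 - \tfrac{1}{r}\, d(x, G)\}$, which in turn dominates $\charac_G$. So $\mu_k(G) \le \int \psi_G \, d\mu_k$ and $\int \psi_G \, d m_\Lambda \le m_\Lambda(B_r(G))$. It therefore suffices to show $\int \psi_G \, dm_\Lambda \ge \int \psi_G \, d\mu_k - \eta$ uniformly over open $G$, once $k$ is large. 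The functions $\psi_G$ are all $\tfrac{1}{r}$-Lipschitz and take values in $[0,1]$; hence they form an equicontinuous, uniformly bounded family, which is precompact in $C^0(M)$ by Arzelà--Ascoli. Weak-star convergence $\mu_k \to m_\Lambda$ implies uniform convergence of $\int \phi\, d\mu_k \to \int \phi \, dm_\Lambda$ over any such equicontinuous family (this is the standard metrizability/compactness argument: if not, extract $\phi_{k_j} \to \phi_\infty$ uniformly with $|\int \phi_{k_j} d\mu_{k_j} - \int \phi_{k_j} dm_\Lambda| \ge \eta$, then pass to the limit to contradict $\mu_{k_j} \to m_\Lambda$). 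So for $k$ large, $\big|\int \psi\, d\mu_k - \int \psi\, dm_\Lambda\big| < \eta$ for \emph{every} $\tfrac1r$-Lipschitz $\psi$ valued in $[0,1]$, in particular for every $\psi_{G_k}$, contradicting the assumption.

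\textbf{Main obstacle.} The only genuinely non-routine point is recognizing that uniformity over all Borel sets $G$ must be obtained by routing through the equicontinuous family $\{\psi_G\}$ rather than through any single test function — i.e.\ that the sublemma is really a statement of uniform weak-star convergence against a precompact set of observables. Once that is seen, everything else is the Portmanteau lemma, Arzelà--Ascoli, and a routine $\eta/3$-type extraction. I would write it up directly (not by contradiction) by fixing $k$ large enough that $\sup_\psi \big|\int \psi\, d\mu_k - \int \psi\, dm_\Lambda\big| < \eta$ over the $\tfrac1r$-Lipschitz $[0,1]$-valued family, and then for any $G$ chaining $\mu_k(G) \le \int \psi_G d\mu_k < \int \psi_G dm_\Lambda + \eta \le m_\Lambda(B_r(G)) + \eta$.
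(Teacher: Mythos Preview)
Your argument is correct, and the final direct version---sandwiching $\charac_G \le \psi_G \le \charac_{B_r(G)}$ with $\psi_G(x) = \max\{0,\,1 - d(x,G)/r\}$, then invoking uniform convergence of $\int \psi\,d\mu$ over the equicontinuous family of $\tfrac{1}{r}$-Lipschitz $[0,1]$-valued functions via Arzel\`a--Ascoli---is clean. (The initial detour reducing to open $G$ is unnecessary: $\psi_G$ is continuous for \emph{any} subset $G$, since $d(\cdot,G)$ is $1$-Lipschitz.) The paper's proof uses the same sandwich idea but obtains the uniformity more elementarily: it fixes in advance a finite $r$-fine partition of unity $\psi_1,\ldots,\psi_J$, requires only the finitely many weak-star conditions $\int \psi_j\,d\mu \le \int \psi_j\,dm_\Lambda + \eta/J$, and for a given $G$ takes $\hat\psi$ to be the sum of those $\psi_j$ whose support lies inside $B_r(x)$ for some $x\in G$; then $\charac_G \le \hat\psi \le \charac_{B_r(G)}$, and uniformity over $G$ is automatic since only $2^J$ sums $\hat\psi$ can arise. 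Your route is the more conceptual one (recognizing the sublemma as uniform convergence over a precompact set of observables); the paper's is constructive and avoids Arzel\`a--Ascoli altogether.
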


\begin{proof}
%Fix positive numbers $r$ and $\eps$.
We choose an $r$-fine partition of unity, that is, a family of
continuous non-negative functions $\psi_j: M \to \R$, $j=1,\ldots,J$
such that $\sum_j \psi_j = 1$ and
each set
$\supp \psi_j = \cl{\{\psi_j \neq 0\}}$
has diameter less than~$r$.
Now assume that $\mu$ is a measure
close enough to $m_\Lambda$ so that
$$
\int \psi_j \, d\mu \le
\int \psi_j \, dm_\Lambda + \frac{\eta}{J} \quad \text{for each $j$.}
$$
Given a Borel set $G$,
consider all functions $\psi_j$ such that $\supp \psi_j \subset B_r(x)$ for some $x \in G$,
and let $\hat \psi$ be their sum.
Notice that
$$
\charac_G \le \hat\psi \le  \charac_{B_r(G)} \, .
$$
Therefore
\[
\mu(G)
\le \int \hat\psi \, d\mu
\le \int \hat\psi \, d\mu + \eta
\le m_\Lambda \big( B_{r}(G) \big) + \eta. \qedhere
\]
\end{proof}

\begin{subl}[Covering most of $\Lambda$ by balls around good periodic points]\label{sl.good periodic}
There exists a finite $f$-invariant set $F \subset B_r(\Lambda)$
such that
%\begin{equation}\label{e.good periodic}
$$
m \Big( \Lambda \setminus B_r\big(F \cap \block(\ell,f) \big) \Big) < 7 \eta \, .
$$
%\end{equation}
\end{subl}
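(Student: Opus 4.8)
The plan is to extract the periodic points from the volume-preserving Ergodic Closing Lemma (Theorem~\ref{t.ECL plus}) applied to the measure $m_\Lambda$, and then to use Lemma~\ref{l.block} --- for both the stable and the unstable blocks --- to see that these periodic points spend most of their mass inside $\block(\ell,f)$. Concretely, apply Theorem~\ref{t.ECL plus} to $\mu=m_\Lambda\in\cM(f)$ to obtain $\mu_*\in\cM(f)$ with finite support such that $\supp\mu_*$ is Hausdorff-close to $\supp m_\Lambda$ and the push-forward $\cL_*\mu_*$ (with $\cL=\cL_f$ as in \eqref{e.trick}) is weak-star close to $\cL_* m_\Lambda$; how close will be fixed along the way. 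Set $F:=\supp\mu_*$. Being the support of a finite invariant measure, $F$ is a finite $f$-invariant set, i.e.\ a finite union of periodic orbits; and since $\supp m_\Lambda\subseteq\cl\Lambda$, the Hausdorff closeness gives $F\subseteq B_r(\Lambda)$. As $\mu_*$ is invariant and supported in $B_r(\Lambda)\subseteq V$, the set $F$ lies in the maximal $f$-invariant subset $\hat\Lambda$ of $V$, which carries the (continuous) dominated splitting $E^{cu}\oplus E^{cs}$; in particular $\mu_*(\hat\Lambda)=1$ and $\block^s(\ell,f),\block^u(\ell,f)\subseteq\hat\Lambda$.

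The heart of the matter is to check that the hypotheses of Lemma~\ref{l.block} hold with $\mu=\mu_*$, with $\hat\Lambda$ in the role of ``$\Lambda$'', with the already fixed $\ell$, with parameter $\eta':=\eta/m(\Lambda)$ in place of ``$\eta$'', and with a suitable $\alpha'<\alpha$ in place of ``$\alpha$''. Two ingredients are needed. First, a structural fact: along a periodic orbit contained in a set with a dominated splitting of index $i$, domination forces the splitting to separate the largest $i$ Lyapunov exponents from the smallest $d-i$, so on $\supp\mu_*$ one has $\lambda^{cs}=\lambda_{i+1}(f,\cdot)$ (and the analogous quantity for $f^{-1}$ equals $-\lambda_i(f,\cdot)$). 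Second, the transfer of the Lyapunov data: since $x\mapsto\frac1\ell\log\|Df^\ell(x)|E^{cs}\|$ is continuous on $\hat\Lambda$ (extend it to $M$ by Tietze) and $\lambda_{i+1}(f,\cdot)$ is one of the coordinates of $\cL$, both $\mu_*\{\lambda^{cs}>-\alpha'\}$ and $\int|\frac1\ell\log\|Df^\ell|E^{cs}\|-\lambda^{cs}|\,d\mu_*$ can be made (for $\mu_*$ close enough, using upper semicontinuity on the closed set $\{(x,t)\in M\times\R^d:t_{i+1}\ge-\alpha'\}$) as close as we wish to $m_\Lambda\{\lambda_{i+1}(f,\cdot)\ge-\alpha'\}$ and to $\int|\frac1\ell\log\|Df^\ell|E^{cs}\|-\lambda_{i+1}(f,\cdot)|\,dm_\Lambda$, i.e.\ to $\frac1{m(\Lambda)}m\{x\in\Lambda:\lambda_{i+1}(f,x)\ge-\alpha'\}$ and $\frac1{m(\Lambda)}\int_\Lambda|\cdots|\,dm$. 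Dividing \eqref{e.alpha1}, \eqref{e.alpha0} and the first term of \eqref{e.l} by $m(\Lambda)$, and choosing $\alpha'<\alpha$ close enough to $\alpha$ (so that $\ell>1/(\alpha\eta)$ and $\eta'\ge\eta$ still give \eqref{e.c2} for the pair $(\alpha',\eta')$), one verifies \eqref{e.c1}, \eqref{e.c2}, \eqref{e.c3}. Lemma~\ref{l.block} then yields $\mu_*(\hat\Lambda\setminus\block^s(\ell,f))<3\eta/m(\Lambda)$; symmetrically, applying Lemma~\ref{l.block} to $f^{-1}$ (using the second term of \eqref{e.l} and the ``$\lambda_i<\alpha$'' half of \eqref{e.alpha1}, \eqref{e.alpha0}), $\mu_*(\hat\Lambda\setminus\block^u(\ell,f))<3\eta/m(\Lambda)$. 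Since $\mu_*(\hat\Lambda)=1$, we get $\mu_*(\block(\ell,f))>1-6\eta/m(\Lambda)$.

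To conclude, note that $\mu_*(M\setminus F)=0$, so $\mu_*(F\cap\block(\ell,f))=\mu_*(\block(\ell,f))>1-6\eta/m(\Lambda)$. Taking $\mu_*$ also weak-star close enough to $m_\Lambda$ for Sublemma~\ref{sl.comparasion} to apply with $G=F\cap\block(\ell,f)$, we obtain
\[
m_\Lambda\bigl(B_r(F\cap\block(\ell,f))\bigr)\ \ge\ \mu_*\bigl(F\cap\block(\ell,f)\bigr)-\eta\ >\ 1-\frac{6\eta}{m(\Lambda)}-\eta ,
\]
whence, multiplying by $m(\Lambda)$,
\[
m\bigl(\Lambda\setminus B_r(F\cap\block(\ell,f))\bigr)=m(\Lambda)\Bigl(1-m_\Lambda\bigl(B_r(F\cap\block(\ell,f))\bigr)\Bigr)<6\eta+m(\Lambda)\eta\le 7\eta ,
\]
which is the assertion.

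The main obstacle is the transfer of the Lyapunov data in the second paragraph: the exponents are only measurable, so the weak-star convergence $\mu_*\to m_\Lambda$ alone does not control $\mu_*\{\lambda^{cs}>-\alpha'\}$ or the integral in \eqref{e.c3} --- this is precisely why Theorem~\ref{t.ECL plus} is stated with the push-forwards under $\cL$ rather than just weak-star convergence, and why one also needs the structural identity $\lambda^{cs}=\lambda_{i+1}(f,\cdot)$ on periodic orbits inside a dominated set. A secondary nuisance is the arithmetic of the constants: normalizing from $m$ to $m_\Lambda$ costs a factor $1/m(\Lambda)$, and one must take the parameter in Lemma~\ref{l.block} to be exactly $\eta/m(\Lambda)$ (not a larger multiple) so that the final bound stays below $7\eta$.
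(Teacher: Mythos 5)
Your proof is correct and follows essentially the same strategy as the paper: apply Theorem~\ref{t.ECL plus} to $m_\Lambda$ to get a finite-support invariant measure $\mu_*$ with $\cL_*\mu_*$ close to $\cL_*m_\Lambda$, verify the hypotheses of Lemma~\ref{l.block} for $\mu_*$ (using the portmanteau-type continuity available because $\cL$ encodes the exponents and because $\log\|Df^\ell|E^{cs}\|$ is continuous over the dominated set), and then transfer back to $m$ via Sublemma~\ref{sl.comparasion}. The one place where you diverge is bookkeeping: the paper applies Lemma~\ref{l.block} as if the hypotheses hold for $\mu_*$ with the original parameter $\eta$, but \eqref{e.alpha1} and \eqref{e.l} are stated for $m$ on $\Lambda$, so after normalizing to $m_\Lambda$ the natural bound is $\eta/m(\Lambda)$. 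You take $\eta'=\eta/m(\Lambda)$ explicitly and show the final estimate still lands at $6\eta+m(\Lambda)\eta\le 7\eta$; this is a more careful version of the same calculation and the paper's conclusion is unaffected. Your remark that, along periodic orbits inside the dominated set, the splitting separates exponents so $\lambda^{cs}=\lambda_{i+1}(f,\cdot)$ makes explicit a fact the paper uses implicitly, and your use of a closed half-space $\{t_{i+1}\ge-\alpha'\}$ with $\alpha'<\alpha$ is an equivalent substitute for the paper's reliance on \eqref{e.alpha0}.
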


\begin{proof}%[Proof of Lemma~\ref{l.good periodic}]
The idea is to use Lemma~\ref{l.block} again.

By Theorem~\ref{t.ECL plus},
we can find a measure $\mu$ supported on a finite set $F$ 
that is Hausdorff-close to $\supp m_\Lambda = \ess \cl{\Lambda}$
(and in particular contained in $B_r(\Lambda)$)
such that $\cL_*\mu$ is weak-star close to $\cL_* m_\Lambda$,
where $\cL$ is given by \eqref{e.trick}.
In particular, $(\lambda_{i+1})_* \mu$ is close to $(\lambda_{i+1})_* m_\Lambda$.
It then follows from \eqref{e.alpha1} and \eqref{e.alpha0} that
$$
\mu \big\{x \in M ; \; \lambda_{i+1}(f,x) > - \alpha \big\} < \eta \, ,
$$
and in particular, condition \eqref{e.c1} holds (with $F$ in the place of $\Lambda$).

The proximity between $\cL_* \mu$ and $\cL_* m_\Lambda$
also implies that
the integrals of the function $\left| \frac{1}{\ell} \log \|Df^{\ell} | E^{cs} \| - \lambda_{i+1}(f) \right|$
with respect to the measures $\mu$ and $m_\Lambda$ are close.
(Indeed we can write the integral with respect to $m_\Lambda$ as
$$
%\int_M \left| \frac{1}{\ell} \log \|Df^{\ell} | E^{cs} \| - \lambda_{i+1}(f) \right| \, dm_\Lambda = 
\int_{M\times \R^d} \left| \frac{1}{\ell} \log \|Df^{\ell} | E^{cs}(x) \| - y_{i+1} \right| \, d (\cL_* m_\Lambda)(x,y_1,\ldots,y_d)
$$
and the integrand %in the right hand side 
is a continuous function.)
In particular, condition \eqref{e.c3} (with $F$ in the place of $\Lambda$)
follows from \eqref{e.l}.

Thus we can apply Lemma~\ref{l.block}  
and get that $\mu(F \setminus \block^s(\ell,f)) < 3\eta$.
The same estimate holds for $\block^u(\ell,f)$.
Now applying Sublemma~\ref{sl.comparasion} to the set $G = F \cap \block(\ell,f)$
we obtain $m_\Lambda(B_r(G)) \ge \mu(G) - \eta > 1- 7\eta$, and in particular
\[
m(\Lambda \setminus B_r(G)) \le m_\Lambda(\Lambda \setminus B_r(G)) < 7 \eta \, . \qedhere
\]
\end{proof}

We continue with the proof of Lemma~\ref{l.main}.
Let $F$ be given by Sublemma~\ref{sl.good periodic}.
For each $p \in F$ and $g$ close to $f$, let $p^g$ denote the continuation of $p$,
and let $F^g = \{ p^g ;\; p \in F \}$.
Notice that 
$$
p \in F \cap \block(\ell,f) \ \Rightarrow \ 
p^g \in F^g \cap \block(\ell,g) \text{ for all $g$ sufficiently close to $f$} \, .
$$
Indeed, for periodic points, belongingness to the Pesin block involves only a finite number of (open) conditions.

Thus it follows from Sublemma~\ref{sl.good periodic} that
for $g$ sufficiently close to $f$,
$$
m \Big( \Lambda \setminus B_r\big(F^g \cap \block(\ell,g) \big) \Big) < 10 \eta \, .
$$
This, together with Sublemma~\ref{sl.robust block}, gives
\begin{equation}\label{e.robot rock}
m \bigg( \Lambda \setminus   \Big( B_r\big(F^g \cap \block(\ell,g) \big) \cap \block(\ell,g) \Big) \bigg) 
< 100 \eta = \frac{\eps}{2} < m(\Lambda) \, .
\end{equation}
In particular, there exist at least one 
point $p^g \in F^g \cap \block(\ell,g)$ such that 
$B_r(p^g) \cap \block(\ell,g)$ has positive measure.
%Since the invariant manifolds in the Pesin block are ``much longer'' than $r$,
It follows from the definition of $r$ that
%the set $B_r(p^g) \cap \block(\ell,g)$ is contained in
%Pesin heteroclinic class $\class(p^g,g)$.
$$
B_r(p^g) \cap \block(\ell,g) \subset \class(p^g,g) \bmod{0}.
$$
and so $m(\class(p^g,g))>0$.
Now assume that $g$ is $C^2$. %volume-preserving diffeomorphism $C^1$-close enough to $f$ so that the above holds.
Then, by Theorem~\ref{t.RRTU}, 
the restriction of $m$ to $\class(p^g,g)$ is an ergodic measure for $g$;
this proves part~(\ref{i.erg class}) of the lemma.
This measure gives positive weight to $\block(\ell,g)$, which is contained
in the $g$-invariant set $\NUH_i(g)$.
Ergodicity implies that $\NUH_i(g) \supset \class(p^g,g)$ mod~$0$,
which is part~(\ref{i.NUH class}) of the lemma.

We claim that this class $\class(p^g,g)$ does not depend on the choice of the point $p$.
More precisely, if $q$ is another point in $F \cap \block(\ell,f)$ such that 
$B_r(q^g) \cap \block(\ell,g)$ also has positive measure, then  $\class(p^g,g) = \class(q^g,g)$ mod~$0$.
Indeed, since $p$ and $q$ have the same index $i$,
the manifolds $W^u(\cO_f(p))$ and $W^s(\cO_f(q))$ have nonempty intersection by Theorem~\ref{t.connecting},
which is transverse by Theorem~\ref{t.KS}.
Assuming that $g$ is sufficiently close to $f$,
the unstable  manifolds of $\cO_g(p^g)$ still has 
a nonempty transverse intersection
with the stable manifold of $\cO(q^g)$.
Thus, by Lemma~\ref{l.lambda},
$\class^u(p^g, g) \subset \class^u(q^g, g)$ and $\class^s(q^g, g) \subset \class^s(p^g, g)$. 
Since those sets have positive measure, Theorem~\ref{t.RRTU}
implies that they are all equal mod~$0$.

It follows from the claim and \eqref{e.robot rock} that 
\begin{equation}\label{e.lovely to see you}
m\big( \Lambda \setminus \class(p^g,g) \big) < 100\eta < \eps/2.
\end{equation}
%which is ``half'' of part~(\ref{i.sym dif}).

To complete the proof,
assume that $g$ is sufficiently close to $f$ so that, by Theorem~\ref{t.PIS},
it has an invariant set $\Lambda^g$ with
$$
\Lambda^g \subset B_\eps (\Lambda) \quad \text{and} \quad 
m \big(\Lambda^g \vartriangle \Lambda \big) < \eps/2 \, .
$$
Then
\begin{align*}
m\big( \Lambda^g \cap \class(p^g,g) \big) &\ge
m(\Lambda) - m \big(\Lambda^g \setminus \Lambda \big) - m\big( \Lambda \setminus \class(p^g,g) \big) \\
&> m(\Lambda) - \frac{\eps}{2} - \frac{\eps}{2}
>0.
\end{align*}
So ergodicity implies that 
$\class(p^g,g) \subset \Lambda^g$ mod~$0$.
In particular, $\class(p^g,g) \subset B_\eps(\Lambda)$ mod~$0$,
which is part~(\ref{i.neighbor}),
and  $m\big(\class(p^g,g) \setminus \Lambda\big) < \eps/2$,
which together with \eqref{e.lovely to see you}
gives part~(\ref{i.sym dif}).
The proof of Lemma~\ref{l.main} is completed.
\end{proof}  

\begin{proof}[Proof of Theorem~\ref{t.main}]
Take a diffeomorphism $f$ in the set $\cR$ described before.
If the set $\NUH(f)$ has zero measure then there is nothing to show, so assume this is not the case.
Take $i \in \{1, \ldots, d-1\}$ such that $m(\NUH_i(f))>0$.

\smallskip

\emph{Proof that $m|\NUH_i(f)$ is ergodic:}
Let $a = m(\NUH_i(f))$ and  $\mu = a^{-1} \cdot m|\NUH_i(f)$.
By contradiction, assume that $\mu$ is not ergodic for $f$.
Then, in the notation of Section~\ref{s.new}, we have $\bkappa_{f}(\{\mu\}) = 0$.
Let $\cU \subset \cM(f)$ be an open set containing $\mu$ 
with $\bkappa_{f}(\cU) < a$ and $\bkappa_{f} (\partial \cU) = 0$.
Using Theorem~\ref{t.Avila},
choose a sequence $g_n$ of $C^2$ volume-preserving diffeomorphisms converging to $f$ in the $C^1$-topology.
Using Lemma~\ref{l.main}, we can find for each sufficiently large $n$
a Borel set $H_n$  such that 
the measure
$m|H_n$ is non-zero, invariant and ergodic with respect to $g_n$,
and moreover $m \big( H_n\vartriangle \NUH_i(f) \big) \to 0$ as $n \to \infty$.
Denote by $\mu_n$  the normalization of $m|H_n$; 
then $\mu_n \to \mu$.
Since $\mu_n$ is $g_n$-ergodic, we have $\bkappa_{g_n}(\{\mu_n\}) = m(H_n) \to a$.
On the other hand, for sufficiently large $n$ we have  $\bkappa_{g_n}(\{\mu_n\}) \le \bkappa_{g_n}(\cU)$.
But, by Theorem~\ref{t.CED}, $\bkappa_{g_n}(\cU) \to \bkappa_{f}(\cU) < a$.
This contradiction proves ergodicity.

%\smallskip
%
%Consider the splitting $T_x M = E^{cu}(x) \oplus E^{cs}(x)$
%into sums of Oseledets spaces corresponding respectively to positive and negative exponents.
%By Theorem~\ref{t.BV}, 
%this splitting is (uniformly) dominated along the orbit of almost every $x\in \NUH_i(f)$.
%Since $m|\NUH_i(f)$ is ergodic, there is a full measure subset of $\NUH_i(f)$
%restricted to which the splitting above becomes (uniformly) dominated.

\smallskip

\emph{Proof that $\NUH_i(f)$ is essentially dense:}
By contradiction, assume this is not the case, thus 
there exists $z\in M$ and $\eps>0$ such that
$m(B_{2\eps}(z) \cap \NUH_i(f)) = 0$.
Let $\Lambda$ be the set of Lebesgue density points of $\NUH_i(f)$;
then 
\begin{equation} \label{e.hole}
\Lambda \cap B_{2\eps}(z) = \emptyset .	
\end{equation}
Since $f|\Lambda$ has a dominated splitting,
we can apply Lemma~\ref{l.main} and find periodic points $p_1$, \dots, $p_J$.
By Theorem~\ref{t.connecting}, each manifold $W^s(\cO(p_j))$ is dense in $M$.
Thus, $W^s(\cO(p_j^g),g)\cap B_\eps(z) \neq \emptyset$ for every $g$ sufficiently close to $f$ and every $j$.
Take a $C^2$ diffeomorphism $g$ very close to $f$;
then 
$W^s(\cO(p_j^g),g)\cap B_\eps(z) \neq \emptyset$ for every $j$.
Moreover, by  Lemma~\ref{l.main} there is $j$ such that
$\class(p_j^g)$ has positive measure.
By Lemma~\ref{l.ess cl}, the essential closure of $\class^s(p_j^g,g)$ 
(which equals $\ess \cl \class(p_j^g,g)$ by Theorem~\ref{t.RRTU})
contains $W^s(\cO(p_j),g)$;
in particular, $\class(p_j^g) \cap B_\eps(z)$ has positive measure.
Lemma~\ref{l.main} also says that 
$\class(p_j^g) \subset B_\eps(\Lambda)$ mod~$0$,
which contradicts \eqref{e.hole}.

%\smallskip
%
%\emph{Proof that there is a global dominated splitting of index $i$:}
%We have seen that $f$ has a dominated splitting on the set of Lebesgue density points of $\NUH_i(f)$,
%and hence on its closure $M$.

\smallskip

\emph{Proof of the uniqueness of the index $i$:}
Let $k \in \{1, \ldots, d-1\}$ be such that $\NUH_k(f)$ has positive measure.
By symmetry, we can assume that $i \ge k$.
Applying Lemma~\ref{l.main} twice,
namely, to the sets of Lebesgue density points of $\NUH_i(f)$ and $\NUH_k(f)$,
we obtain periodic points
$p_1$, \ldots, $p_J$ of index $i$,
and $q_1$, \ldots, $q_L$ of index $k$.
By Theorem~\ref{t.connecting},
the manifolds $W^u(\cO_f(p_j))$ and $W^s(\cO_f(q_\ell))$ have nonempty intersection,
which is transverse by Theorem~\ref{t.KS}.
Now consider a $C^2$ diffeomorphism $g$ that is $C^1$-close to~$f$.
Then the manifolds $W^u(\cO_g(p_j^g))$ and $W^s(\cO_g(q_\ell^g))$ still intersect transversely.
Thus, by Lemma~\ref{l.lambda},
$\class^u(p^g_j, g) \subset \class^u(q^g_\ell, g)$ and $\class^s(q^g_\ell, g) \subset \class^s(p^g_j, g)$
for each $j$, $\ell$.
On the other hand, by Lemma~\ref{l.main},
there are $j$ and $\ell$ such that
$\class(p^g_j)$ has positive measure and is contain mod~$0$ in $\NUH_i(g)$,
and $\class(q^g_\ell)$ has positive measure and is contain mod~$0$ in $\NUH_k(g)$.
By Theorem~\ref{t.RRTU}, $\class(p^g_j) = \class(q^g_\ell)$ mod~$0$.
So $\NUH_i(g) \cap \NUH_k(g)$ has positive measure and therefore $k=i$,
as we wanted to show.

This completes the proof of Theorem~\ref{t.main}.
\end{proof}

%%%%%%%%%%%%%%%%%%%%%%%%%%%%%%%

\vfill

{\footnotesize

%\noindent Artur Avila

\noindent CNRS UMR 7586, Institut de Math\'ematiques de Jussieu,
Universit\'e de Paris~VII. France.

\noindent \textit{Current address:} IMPA, Estr.\ D.\ Castorina, 110, Rio de Janeiro, RJ 22460-320, Brazil.

\noindent \href{http://www.impa.br/%7Eavila}{\tt www.impa.br/$\sim$avila}

\noindent \href{mailto:artur@math.sunysb.edu}{\tt artur@math.sunysb.edu}

\bigskip

%\noindent Jairo Bochi

\noindent Departamento de Matem\'atica, Pontif\'{\i}cia Universidade Cat\'olica do Rio de Janeiro.

\noindent Rua Mq.\ S.\ Vicente, 225, Rio de Janeiro, RJ 22453-900, Brazil.

\noindent \href{http://www.mat.puc-rio.br/%7Ejairo}{\tt www.mat.puc-rio.br/$\sim$jairo}

\noindent \href{mailto:jairo@mat.puc-rio.br}{\tt jairo@mat.puc-rio.br}

}
\end{document}